\documentclass[oneside,reqno,10pt]{amsart}
\usepackage{graphicx,amsfonts,amssymb,amsmath,amsthm,url,amscd,comment}
\usepackage{color}
\usepackage[usenames,dvipsnames]{xcolor}
\usepackage[normalem]{ulem}
\usepackage{pdfsync}
\usepackage{graphicx, amsmath, amssymb, amsfonts, amsthm, stmaryrd, tikz, amscd}
\usepackage[all]{xy}
\usetikzlibrary{matrix,arrows,decorations.pathmorphing}

 \usepackage[hyperfootnotes=false, colorlinks, citecolor=	 RoyalBlue, urlcolor=blue, linkcolor=blue         ]{hyperref}

\theoremstyle{plain} 
\newtheorem{theorem}             {Theorem}  
\newtheorem{lemma}      [theorem]{Lemma}

\newtheorem{proposition}[theorem]{Proposition}

\theoremstyle{definition}

\theoremstyle{remark}

\renewcommand{\Re}{\mathrm{Re}}
\renewcommand{\Im}{\mathrm{Im}}



\renewcommand{\geq}{\geqslant}
\renewcommand{\leq}{\leqslant}






\DeclareMathOperator{\sgn}{sgn}
\DeclareMathOperator{\SL}{SL}
\DeclareMathOperator{\Mp}{Mp}
\DeclareMathOperator{\GL}{GL}

\DeclareMathOperator{\height}{height}

\DeclareMathOperator{\SO}{SO}
\DeclareMathOperator{\ad}{ad}
\DeclareMathOperator{\Ad}{Ad}
\DeclareMathOperator{\disc}{disc}

\DeclareMathOperator{\Hom}{Hom}
\DeclareMathOperator{\End}{End}
\DeclareMathOperator{\JL}{JL}

\def\eps{\varepsilon}

\DeclareMathOperator{\PGL}{PGL}
\DeclareMathOperator{\Jac}{Jac}

\DeclareMathOperator{\diag}{diag}

\def\O{\operatorname{O}}

\DeclareMathOperator{\Fix}{Fix}
\DeclareMathOperator{\Opp}{Op}

\DeclareMathOperator{\sym}{sym}

\DeclareMathOperator{\ram}{ram}
\DeclareMathOperator{\fin}{fin}

\DeclareMathOperator{\Eis}{Eis}
\DeclareMathOperator{\reg}{reg}

\DeclareMathOperator{\res}{res}

\DeclareMathOperator{\trace}{trace}

\DeclareMathOperator{\vol}{vol}
\DeclareMathOperator{\nr}{nr}

\DeclareMathOperator{\tr}{tr}

\DeclareMathOperator{\supp}{supp}

\author{Paul D. Nelson}
\address{ETH Zurich, Department of Mathematics, R{\"a}mistrasse 101, CH-8092, Zurich, Switzerland}
\email{paul.nelson@math.ethz.ch}
\subjclass[2010]{Primary 11F27; Secondary 11F37, 58J51}

\date{\today}
\title{Quantum variance on quaternion algebras, I}
\hypersetup{
  pdfkeywords={},
  pdfsubject={},
  pdfcreator={Emacs 24.5.1 (Org mode 8.2.10)}}
\begin{document}

\maketitle
\begin{abstract}
We determine the quantum variance of a sequence of families of automorphic forms on 
a compact quotient arising from a non-split quaternion algebra.
Our results compare to those obtained 
on $\SL_2(\mathbb{Z}) \backslash \SL_2(\mathbb{R})$
in work of
Luo, Sarnak and Zhao,
whose method required a cusp.
Our
method uses the  theta correspondence to
reduce the problem to the estimation of metaplectic Rankin--Selberg
convolutions.
We apply it here to the first non-split case.
\end{abstract}

\setcounter{tocdepth}{1}
\tableofcontents
\section{Introduction}
\label{sec-1}
\subsection{Overview}\label{sec:overview}
Let $M$ be a negatively curved compact Riemannian manifold
with
$d := \dim(M) \geq 2$.
It is known in various senses that a \emph{random} Laplace
eigenfunction on $M$ of large eigenvalue is uniformly
distributed, but very rarely known just \emph{how} uniformly.
To explain,
denote by $\mu$ the probability measure on $M$ that
is a multiple of the volume measure
and
let $T \rightarrow \infty$ be a positive parameter.
For each $t \in [T, 2 T]$, choose an orthonormal basis
for the $t^2$-eigenspace of the Laplacian
acting on $L^2(M,\mu)$.
Let $\mathcal{F}_T$
denote the union of these bases; it is finite set of cardinality
$|\mathcal{F}_T| \asymp T^{d}$.
To each $\varphi \in \mathcal{F}_T$,
attach the $L^2$-mass
\[
\mu_\varphi := |\varphi|^2 \mu,
\quad \mu_\varphi(\Psi)
:=
\int_{x \in M}
\Psi(x) |\varphi|^2(x) \, d \mu(x).
\]
It is a probability measure on $M$.
A consequence of the quantum ergodicity theorem
is that the \emph{mean} $|\mathcal{F}_T|^{-1} \sum_{\varphi \in
  \mathcal{F}_T} \mu_\varphi$
converges weakly to $\mu$.
The \emph{quantum variance problem}
concerns the asymptotic determination
of the quadratic form
\begin{align}\label{eq:variance-sums-general}
  \mathcal{V}_T(\Psi) &:=
\frac{1}{|\mathcal{F}_T|}
\sum_{\varphi \in \mathcal{F}_T}
\left\lvert
  \mu_\varphi(\Psi)
  -
  \mu(\Psi)
                        \right\rvert^2
                        \\ \label{eq:quadrilinear-obvious}
  &=
\frac{1}{|\mathcal{F}_T|}
\sum_{\varphi \in \mathcal{F}_T}
\int_{x,y \in M}
\Psi(x) \overline{\Psi(y)}
(|\varphi(x)|^2 - 1)
(|\varphi(y)|^2 - 1)
\, d \mu(x) \, d \mu(y)
\end{align}
and its bilinearization $\mathcal{V}_T(\Psi_1,\Psi_2)$, which
quantify the \emph{correlations of the fluctuations} of the
$\mu_\varphi$. They extend naturally to the phase space
$\mathbf{X} = S^* M$ by replacing $\mu_\varphi$ with its
microlocal lift
$\omega_\varphi(\Psi) := \langle \Opp(\Psi) \varphi, \varphi
\rangle$
and $\mu$ with the Liouville measure.

Zelditch
introduced these sums and showed using semiclassical techniques that $\mathcal{V}_T(\Psi) =
O(1/\log T)$
(see \cite{MR916129,MR1262192,MR2267060}).
A prediction of
Feingold--Peres (see \cite{MR848319},
\cite[\S15.6]{2009arXiv0911.4312Z}, \cite[\S4.1.3]{MR3204186})
suggests that for ``generic''
$M$, the renormalized quantum variance $T^{d-1} 
\mathcal{V}_T$
is asymptotic to the classical variance 
of the geodesic flow.
The quantum variance problem
is thus extremely delicate:
a solution in the expected form
\begin{equation}\label{eq:expected-soln-qv}
  T^{d-1}  \mathcal{V}_T(\Psi)
  = \mathcal{V}_\infty(\Psi) + o(1)
\end{equation}
demands not only the enormous improvement
$\mathcal{V}_T(\Psi) = O(T^{1-d})$ upon the semiclassical estimate,
but the further extraction of a main term.

Luo--Sarnak \cite{MR2103474} insightfully observed that on
the modular curve $\SL_2(\mathbb{Z}) \backslash \mathbb{H}$,
the quantum variance problem for Hecke--Laplace eigenfunctions
may be profitably attacked
using arithmetic techniques stemming from the theory of Hecke operators. 
The solution, 
obtained in sharpest form by Sarnak--Zhao \cite{2013arXiv1303.6972S}
after the increasingly sophisticated works
\cite{MR1361757,MR1465794,luo-sarnak-mass,MR2103474,MR2651907},
is arithmetically interesting:
central values
of $L$-functions quantify the deviation of the generic prediction from the truth.

This article introduces a method for determining the quantum
variance of families of automorphic forms on \emph{compact}
arithmetic quotients attached to \emph{non-split} quaternion
algebras $B$.  The \emph{non-compact} quotient
$\SL_2(\mathbb{Z}) \backslash \mathbb{H}$ considered by
Luo--Sarnak--Zhao arises algebraically from the \emph{split}
quaternion algebra $B = M_2(\mathbb{Q})$.  The problem had been
open in every non-split case prior to this work.  We aim here to
introduce our method by application to the simplest non-trivial
non-split case.

The first step in the method of Luo--Sarnak--Zhao
reduces the expression \eqref{eq:quadrilinear-obvious},
which
is \emph{quadrilinear} in $\varphi$,
to a complicated \emph{bilinear} expression
to which trace formulas apply
and from which a colossal limiting bilinear form eventually emerges.
This is achieved
by combining the close relation
between Fourier coefficients at the cusp
$\infty$
of $\SL_2(\mathbb{Z}) \backslash \mathbb{H}$
and Hecke eigenvalues $\lambda(n)$
with the Hecke multiplicativity
$\lambda(m) \lambda(n) = \sum_{d \mid m,n} \lambda(m n/d^2)$.
Unfortunately, such an approach is unavailable
on compact arithmetic quotients,
which have no cusps
and
hence no notion of Fourier expansion that interacts suitably
with the Hecke theory.

In our method, the theta
correspondence is decisive. The well-known seesaw diagram
\begin{equation}\label{eqn:seesaw}
  \xymatrix{\O(B)\ar @{-}[d]\ar @{-}[dr]&
    \Mp_2\times \Mp_2 \ar @{-}[dl]\ar @{-} [d]\\ 
    \O(1) \times \O(B^0) & \SL_2}
\end{equation}
substitutes for the linearizing role played by Fourier
coefficients and Hecke multiplicativity in the method of
Luo--Sarnak--Zhao.  
The basic input
powering convergence to the limit
is decay of matrix coefficients for the
dihedral\footnote{
  In this article,
  only ``split dihedral'' forms (i.e., unitary Eisenstein
  series)
  contribute.
}
spectrum
of $\SL_2$.
The Rallis inner product formula and the
Maass--Shintani--Waldspurger theta lift
are shown to give natural interpretations to the main terms obtained here and
in earlier works.
We discover ``secondary main terms''
to the quantum variance sums
at the square-root cancellation threshold.
The primary novelty is that we prove asymptotic formulas
for quantum variance sums on a non-split quaternion
algebra for the first time.\footnote{
  We refer to \cite{MR2150390, MR2356110}
  for some analogous results in a simpler arithmetic setting.
}

A general tool available
on any quaternionic congruence quotient $\Gamma \backslash
\mathbb{H}$
is the \emph{triple product formula}
(see \cite{harris-kudla-1991,watson-2008,MR2449948}).
When both $\phi$ and $\Psi$ are Hecke eigenfunctions,
it relates $|\mu_\varphi(\Psi)|^2$ to the $L$-value $L(\varphi
\times \varphi \times \Psi, 1/2)$
and $\mathcal{V}_T(\Psi)$ to a weighted average 
whose analysis succumbs to well-developed
techniques
(see \cite{MR2103474,MR2651907,2013arXiv1303.6972S})
giving
Lindel{\"o}f-consistent asymptotics of the form \eqref{eq:expected-soln-qv}.
Unfortunately, the triple product
formula is non-linear; it does not apply when,
for instance,
$\Psi$ is the sum of two \emph{inequivalent}
mean zero
Hecke eigenfunctions $\Psi_1, \Psi_2$.
For the correlations
\begin{align*}
  \mathcal{V}_T(\Psi_1,\Psi_2)
  &=
\frac{1}{|\mathcal{F}_T|}
\sum_{\varphi \in \mathcal{F}_T}
\mu_\varphi(\Psi_1)
    \overline{\mu_\varphi(\Psi_2)}
    \\
&=
\frac{1}{|\mathcal{F}_T|}
\sum_{\varphi \in \mathcal{F}_T}
\int_{x,y \in M}
\Psi_1(x) \overline{\Psi_2(y)}
|\varphi(x)|^2
|\varphi(y)|^2
\end{align*}
one then expects (following \cite{MR2103474})  the further
cancellation $\mathcal{V}_T(\Psi_1,\Psi_2) = o(1/T)$ coming
from the \emph{independence of variation in sign} of the
quantities $\mu_\varphi(\Psi_1)$, $\mu_\varphi(\Psi_2)$ beyond
the prediction $\mu_\varphi(\Psi_i) = O(T^{-1/2+o(1)})$ 
of the Lindel{\"o}f
hypothesis for their \emph{magnitude}.
Detecting such cancellation is the fundamental difficulty overcome
in the works of Luo--Sarnak--Zhao.
It is achieved by lengthy analysis
of symmetry properties of their emergent bilinear form,
which is found (remarkably) to be Hecke self-adjoint.
The analogous difficulty in our approach is ultimately addressed by the
unramified case of the local
theta correspondence for $(\Mp_2, \O(B^0))$.



Our main result concerns automorphic forms with ramification
varying in a \emph{non-archimedean} aspect.  The setup differs
superficially from what was discussed above, but retains the
fundamental difficulty while allowing us to introduce the core
ideas of the method as accessibly as we can.




\subsection{Statement of main result}
Terminology to follow is standard, and will be reviewed
more thoroughly
in \S\ref{sec:preliminaries}.
Set $G := \GL_2(\mathbb{Q}_2)$, $K := \GL_2(\mathbb{Z}_2)$.
We fix a discrete cocompact subgroup $\Gamma < G$ arising
from a maximal order in the quaternion algebra ramified at
$\{\infty, 23 \}$ (see \S\ref{sec:generalities}).
(The significance
of the pair of numbers $(2,23)$
is that it is the lexicographically smallest
for which the problem to be discussed has all essential
features;
we have focused our discussion in this way
to simplify notation and
exposition.)
Let
$N$ be a positive integral parameter tending off to $\infty$.
We consider the
family $\mathcal{F}_N$ of $L^2$-normalized newforms (see
\S\ref{sec:famil-balanc-newv}) on a sequence of congruence
covers
\[
  \mathbf{Y}_N := \Gamma \backslash G / K_{-N..N}
  \text{ of }\mathbf{Y} := \Gamma \backslash G / K,
  \]
where more generally
\begin{equation}\label{eq:defn-K-neg-N-pos-N}
  K_{-N_1..N_2} := K \cap \begin{pmatrix}
  \mathbb{Z}_2 & 2^{N_1} \mathbb{Z}_2 \\
  2^{N_2} \mathbb{Z}_2 & \mathbb{Z}_2
\end{pmatrix}.
\end{equation}
The sets $\Gamma \backslash G,
\mathbf{Y}_N, \mathbf{Y}$
come equipped with compatibly-defined Hecke
correspondences
$T_n$
for odd integers $n \geq 1$ (see \S\ref{sec:hecke-operators});
on $\mathbf{Y}$,
$T_n$ is defined for all $n \geq 1$.

The sets $\mathbf{Y}$ and $\Gamma \backslash G$ are
roughly $2$-adic analogues of a compact arithmetic hyperbolic
surface and its cotangent bundle, respectively.
The base space $\mathbf{Y}$
turns out to have cardinality three.
One may regard it as
a
$(2+1)$-regular
directed multigraph on three vertices
and its cover $\mathbf{Y}_{N}$ as (with suitable interpretation,
taking into account torsion) the set of
non-backtracking paths
$y = (y_{-N} \rightarrow \dotsb \rightarrow y_{N})$ of length
$2 N + 1$ on $\mathbf{Y}$, with the covering map
$\mathbf{Y}_{N} \rightarrow \mathbf{Y}$ given by the projection
$y \mapsto y_0$.
One may also identify $\mathbf{Y}$ with the set of isomorphism
classes
of supersingular elliptic curve $E$ in characteristic $23$
(see \cite[\S2]{MR894322})
and $\mathbf{Y}_{N}$
with the  cover obtained by considering
level structure
consisting of pairs $C_1, C_2$ of cyclic subgroups
of order $2^{N}$
with $C_1 \cap C_2 = \{0\}$;
in fact,
it will be convenient to adopt this
algebraic perspective
for some calculations (\S\ref{sec:appendix-fluctuations}).

The space $\Gamma \backslash G$
has a natural invariant measure (see \S\ref{sec:measures})
which induces a natural measure on $\mathbf{Y}$.
The space $L^2(\mathbf{Y})$ is three-dimensional,
and decomposes as
\[L^2(\mathbf{Y}) = \mathbb{C} \oplus \mathbb{C} \Psi_1 \oplus
  \mathbb{C} \Psi_2,
\]
where $\Psi_1, \Psi_2$ are orthonormal mean zero real-valued
Hecke eigenfunctions, well-defined up to permutation and sign.
For orientation,
we record that under the Eichler/Jacquet--Langlands correspondence,
$\Psi_1, \Psi_2$
correspond
to the weight $2$ newforms  $\Psi_1^{\JL}, \Psi_2^{\JL}$ on $\Gamma_0(23)$ and
$\mathcal{F}_N$ to the set of weight $2$ newforms
on $\Gamma_0(2^{2 N} \cdot 23)$.

To each $\varphi \in \mathcal{F}_N$ we attach
the ``harmonic
weight'' $\iota_{\varphi} := L^{(2)}(\ad \varphi,1)$ of size
$2^{o(N)}$ (see \S\ref{sec-1-3-7}); these mild weights play a
role (non-obviously) similar to that in
\cite{2013arXiv1303.6972S} and earlier works, and should be
ignored on a first reading.

The fundamental object
of study in this article
is the real symmetric $2 \times 2$ matrix
$V_N := (V_N^{kl})_{k,l=1,2}$
with entries
\[
  V_N^{k l}
  := \frac{1}{2^{2 N}}
  \sum _{\varphi \in \mathcal{F}_N}
  \iota_{\varphi}
  \int_{x,y \in \Gamma \backslash G}
  \Psi_k(x) \Psi_l(y) |\varphi|^2(x) |\varphi|^2(y).
\]
Equivalently,
let $\mu_\varphi$ denote the measure
on $\mathbf{Y}$ given by
$\mu_\varphi(\Psi) := \int_{\Gamma \backslash G} \Psi
|\varphi|^2$;
then
\[
  V_{N}^{k l}
  = \frac{1}{2^{2 N}} \sum_{\varphi \in \mathcal{F}_N}
  \iota_{\varphi}
  \mu_\varphi(\Psi_k)
  \mu_\varphi(\Psi_l).
\]
For the discussion of
\S\ref{sec:overview}
specialized to the case
of surfaces,
the numerical analogue here of the quantity $T$
is $2^N$; for instance,
the family cardinality is $|\mathcal{F}_N| \asymp 2^{2 N}$
(see \S\ref{sec:mean-stats} for an exact formula).
To justify interpreting $V_N^{k l}$ as a variance,
we verify in \S\ref{sec:mean-stats}
that
\begin{equation}\label{eq:linear-statistics-justification}
    \frac{1}{|\mathcal{F}_N|} \sum_{\varphi \in \mathcal{F}_N}
  \mu_\varphi = \mu
\end{equation}
with $\mu$ the probability measure
on $\mathbf{Y}$ that is a multiple of the pushforward
of $\int_{\Gamma \backslash G}$.
The entries of $V_N$ quantify the correlations of the
fluctuations of the $L^2$-masses $\mu_\varphi$ of a random newform
$\varphi \in \mathcal{F}_N$ on the congruence cover
$\mathbf{Y}_N$ when tested against the mean zero
Hecke eigenfunctions $\Psi_1,\Psi_2$ on the base space
$\mathbf{Y}$.
The setup is thus the natural $2$-adic
analogue of that in \S\ref{sec:overview}.

We
normalize the Hecke eigenvalues
of $\Psi_1,\Psi_2$
by
$T_n \Psi_i = \sqrt{n} \lambda_{\Psi_i}(n) \Psi_i$
and the standard $L$-functions $L(\Psi_k,s)$
by
analytic continuation
of the Dirichlet series
$\sum_{n \geq 1}
\lambda_{\Psi_k}(n)/ n^{s}$
from the half-plane
$\Re(s)
> 1$, so that $s = 1/2$ is the central point.

\begin{theorem}[Main result]
  \label{thm:special-main}
  \label{thm:3-by-3}~
  \begin{enumerate}
  \item[(i)]
    {\bf Existence of the limiting quantum variance.}
    
    The $2
    \times 2$ matrix limit $\lim_{N \rightarrow
      \infty}
    2^N V_{N}$ exists.
  \item[(ii)]
    {\bf Diagonalization and determination of the limit.}
    
    That limit is given by\footnote{
      One has
      $\{L(\Psi_1,\tfrac{1}{2} ), L(\Psi_2, \tfrac{1}{2} )\}
      \approx \{0.552, 0.450\}$
      according to
      \cite[\href{http://www.lmfdb.org/L/ModularForm/GL2/Q/holomorphic/23/2/1/a/0/}{23.2.1a.0}]{lmfdb},
      \cite[\href{http://www.lmfdb.org/L/ModularForm/GL2/Q/holomorphic/23/2/1/a/1/}{23.2.1a.1}]{lmfdb}}
    \[
      V_{\infty}
      :=
      \begin{pmatrix}
        \tilde{L}(\Psi_1,\tfrac{1}{2}) & 0  \\
        0 & \tilde{L}(\Psi_2,\tfrac{1}{2}) \\
      \end{pmatrix}
    \]
    with
    $\tilde{L}(\Psi_k,\tfrac{1}{2}) := P(\lambda_{\Psi_k}(2)) L(\Psi_k,\tfrac{1}{2})$,
    $P(x) := \pi^2 (15 - 4 \sqrt{2} x) / 69$.
  \item[(iii)]
    {\bf Effective rate of convergence to the limit.}
    
    $|2^N V_N^{kl} - V_{\infty}^{kl}| \leq C N 2^{-N}$ for some absolute effective $C > 0$.
  \end{enumerate}
\end{theorem}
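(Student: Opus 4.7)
The plan is to linearize the quadrilinear definition of $V_N^{kl}$ through the theta correspondence, using the seesaw \eqref{eqn:seesaw}, and then analyze the resulting bilinear expression by spectral decomposition on $\Mp_2$. First I would interpret each factor $\mu_\varphi(\Psi_k)$ as a period of a matrix coefficient of $\varphi$ on $\O(B)$ against $\Psi_k$ viewed as living on $\O(B^0)$, and apply the seesaw identity to transfer this to an inner product on the $\Mp_2$ side between the theta lift of $\varphi \otimes \overline{\varphi}$ and the Maass--Shintani--Waldspurger theta lift $\theta(\Psi_k)$. Summing over $\mathcal{F}_N$ against the harmonic weights $\iota_\varphi = L^{(2)}(\ad \varphi, 1)$ -- whose presence is precisely what matches the natural Petersson-type normalization on the $\Mp_2$ side -- one obtains a bilinear expression in $\theta(\Psi_k)$ and $\theta(\Psi_l)$ tested against a geometric kernel on $\Mp_2$ that encodes the congruence data $K_{-N..N}$.

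The next step is to spectrally expand this bilinear form over $\Mp_2$. As indicated in the introduction, only the split dihedral spectrum (i.e., unitary Eisenstein series) contributes; for test vectors of $2$-adic depth $N$, the relevant matrix coefficients have main term of size $2^{-N}$ with a subleading contribution of order $N 2^{-N}$, which yields both the overall scale of $V_N$ and the effective rate in (iii). The coefficient of the $2^{-N}$ main term is identified, via the Rallis inner product formula, as a product of local factors times the global integral $\int \theta(\Psi_k) \overline{\theta(\Psi_l)}$. In the diagonal case $k = l$, Waldspurger's formula converts this integral into $L(\Psi_k, 1/2)$ with explicit archimedean and $23$-adic constants; in the off-diagonal case, $\theta(\Psi_1)$ and $\theta(\Psi_2)$ are distinct Hecke eigenforms on $\Mp_2$ (since $\Psi_1, \Psi_2$ are inequivalent on $\O(B^0)$) and are therefore orthogonal, so the $2^{-N}$-scale main term vanishes. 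This yields the diagonalization in (ii) directly, without the long self-adjointness argument needed in the Luo--Sarnak--Zhao approach.

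Finally one has to pin down the local factors at the ramified places. The place $23$ is fixed and contributes a finite constant absorbed into the overall normalization; the archimedean place is essentially trivial since the family varies only $2$-adically. The $2$-adic factor, where all of the asymptotic behavior in $N$ actually lives, is computed via the unramified local theta correspondence for $(\Mp_2, \O(B^0))$ acting on spherical vectors of depth $N$, and this is exactly what produces the polynomial $P(\lambda_{\Psi_k}(2)) = \pi^2(15 - 4\sqrt{2}\,\lambda_{\Psi_k}(2))/69$; combined with the Waldspurger $L$-value one recovers $\tilde{L}(\Psi_k, \tfrac{1}{2})$.

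The hard part will be step one: constructing the seesaw identity for test vectors precisely adapted to $\mathcal{F}_N$, so that the level structure $K_{-N..N}$ translates into a well-chosen test vector on the $\Mp_2$ side whose matrix coefficients are tractable, and so that the harmonic weights $\iota_\varphi$ emerge naturally rather than being inserted by hand. Once this algebraic setup is in place, the analytic steps reduce to standard decay properties of matrix coefficients of unitary Eisenstein series together with an explicit unramified local theta calculation at $2$; the crucial qualitative feature -- that cross-correlations between distinct Hecke eigenforms are genuinely smaller than the diagonal variances -- is then built in via the orthogonality of theta lifts, rather than extracted at the end.
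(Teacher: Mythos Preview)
Your proposal is correct and follows essentially the same strategy as the paper: seesaw linearization (implemented concretely in \S\ref{sec-2} via a weighted pretrace formula yielding $\sum_{\varphi\in\mathcal{F}_N}\mu_\varphi(\Psi_k)\,\Phi_\varphi = c_N\,\theta\cdot h_k(2^{2N}\,\cdot)$, with the harmonic weights entering through $\|\Phi_\varphi\|^2=\kappa_1\iota_\varphi$ and Parseval on the Jacquet--Langlands side), the analytic estimate via decay of Eisenstein matrix coefficients, off-diagonal vanishing from orthogonality of the ternary theta lifts $h_k$, and the diagonal via the Rallis inner product formula.

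Two points where the paper's execution differs from your sketch. First, the pushforward computation (Proposition~\ref{prop:theta-kernel-pushforward}) shows that the $K$-average of the test function factors as $1_{\mathbb{Z}_2}(m)\cdot\phi''(2^{-N}\beta)$ with $\phi''$ \emph{fixed}; thus all $N$-dependence is absorbed into the dilation $z\mapsto 2^{2N}z$, and the Rallis local integral at $2$ that eventually produces $P(\lambda_{\Psi_k}(2))$ involves fixed data, not depth-$N$ vectors as you suggest. Second, after the rearrangement the analytic problem is to estimate $\int_{\Gamma'\backslash\mathbb{H}}|\theta|^2(z)\,h_k\overline{h_l}(2^{2N}z)\,d\nu$, and here $|\theta|^2\notin L^2$: a naive spectral expansion would only give $O(N2^{-(1-2\vartheta)N})$. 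The paper handles this via a regularized spectral decomposition of $|\theta|^2$ (\S\ref{sec:analytic-input}); both your assertion that ``only unitary Eisenstein series contribute'' and the sharp error $O(N2^{-N})$ are consequences of the fact, established there, that $|\theta|^2$ is orthogonal to every cusp form --- this is not automatic and is where the Ramanujan exponent $\vartheta$ is genuinely avoided.
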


Each assertion in  Theorem \ref{thm:special-main} 
is highly non-trivial -- for instance, 
the existence of the limit
already implies somewhat more than
the strong improvement $V_N^{kl} \ll 2^{-N}$ over the trivial bound
$V_N^{kl} \ll 1$ -- but
the most novel assertion is that for $k \neq l$,
\begin{equation}\label{eq:most-significant-assertion}
  \lim_{N \rightarrow \infty} 2^N V_N^{k l} = 0
\end{equation}
\[
  \text{(in the strong quantitative form $2^N V_N^{k l} = O(N
    2^{-N})$)}
\]
which is the first of its kind in any non-split/cocompact
setting and lies genuinely beyond existing methods such as
Fourier expansions at cusps, trace formulas, and triple product
formulas, which fail even to reduce its proof to a technical
problem.
One can interpret
\eqref{eq:most-significant-assertion}
as reflecting
a non-obvious
Hecke symmetry
enjoyed by the fluctuations of the measures
$\mu_\varphi$.

\subsection{Application to moments
  of (square roots of) $L$-functions}
The values $\mu_\varphi(\Psi_k)$
are basic examples of triple product periods, which have been
extensively studied.\footnote{See for instance the articles
  \cite{MR1145805,BoSP96,MR2740724, MR2018269}, which focus on definite
  quaternion algebras as we do here, or \cite{MR3110797},
  whose considerations apply
  to the analogous periods obtained
  from newforms on the tower
  of covers $\Gamma_0(2^{2
    N}) \backslash \mathbb{H} \rightarrow \SL_2(\mathbb{Z})
  \backslash \mathbb{H}$ of the modular curve.}
The triple product formula
relates the \emph{squares}
(but not the \emph{signs}) of these periods
to
the central values of triple product $L$-functions.
Unlike in the work of Luo--Sarnak--Zhao,
we do not use the triple product formula in the proof of Theorem
\ref{thm:special-main};
nevertheless, it is instructive
to indicate
briefly
how our results translate thereunder.
The formula in question
(see \cite{MR2449948, MR3110797, 2014arXiv1409.8173H})
has the shape
\[
  \iota_{\varphi}
  |\mu_\varphi(\Psi_k)|^2
  =
  2^{-N}
  \frac{\tilde{L}(\varphi \times \varphi \times \Psi_k,\tfrac{1}{2}) }{ L(\ad \varphi,
    1)}
\]
where
$\tilde{L}(\varphi \times \varphi \times \Psi_k,\tfrac{1}{2}) :=
c_0(\Psi_k,\varphi) L(\varphi \times \varphi \times \Psi_k,\tfrac{1}{2})$
for some explicit nonnegative quantity $c_0(\Psi_k,\varphi)$ that we
expect\footnote{ By \cite[Thm 34]{nelson-padic-que}, this
  expectation holds when $\varphi$ is principal series.  It
  should follow in general from the methods of
  \cite{2014arXiv1409.8173H}.
  One could rewrite this article with the role of $p=2$
  replaced by that of some odd prime.
  We expect then that $c_0(\Psi_k,\varphi)$
  would be independent of $\Psi_k,\varphi$.
  This expectation holds when $\varphi$ is principal
  series by
  \cite[Thm 34]{nelson-padic-que}.
}
is uniformly bounded as $\varphi$ varies.
(The reader will lose little in what follows by
ignoring the factor $c_0(\Psi_k,\varphi)$,
whose explication is beyond the scope of this paper.)
The Lindel{\"o}f hypothesis thus predicts
that the \emph{magnitude}
of the real number
$\mu_\varphi(\Psi_k)$ is typically $\approx 2^{-N/2}$,
but says nothing about its \emph{sign}.
Theorem \ref{thm:special-main} tells us that the signs of
$\mu_\varphi(\Psi_k)$ and $\mu_\varphi(\Psi_l)$
are \emph{highly unbiased} for $k \neq l$;
its diagonal and off-diagonal $k \neq l$ cases 
translate to
\begin{equation}\label{eq:diag-estimate}
  \frac{1}{2^{2 N}}
  \sum_{\varphi \in \mathcal{F}_N}
  \frac{\tilde{L}(\varphi \times \varphi \times \Psi_k,\tfrac{1}{2})}{L(\ad
    \varphi, 1)}
  =
  \tilde{L}(\Psi_k,\tfrac{1}{2}) + O \left( \frac{\log |\mathcal{F}_N|}{\sqrt{|\mathcal{F}_N|}} \right),
\end{equation}
\begin{equation}\label{eq:offdiag-estimate}
  \frac{1}{|\mathcal{F}_N|}
  \sum_{\varphi \in \mathcal{F}_N}
  \frac{
    \sqrt{\tilde{L}(\varphi \times \varphi \times \Psi_k,\tfrac{1}{2})}
    \sqrt{\tilde{L}(\varphi \times \varphi \times \Psi_l,\tfrac{1}{2})}
  }{L(\ad \varphi, 1)}
  = O \left( \frac{\log |\mathcal{F}_N|}{\sqrt{|\mathcal{F}_N|}} \right)
\end{equation}
with
the choice of
square-root in \eqref{eq:offdiag-estimate}
given by the sign of the real number $\mu_\varphi(\Psi_k)$.
The diagonal estimate \eqref{eq:diag-estimate}
is consistent with the Lindel{\"o}f hypothesis
on average.\footnote{
  A direct proof of \eqref{eq:offdiag-estimate} using the approximate functional equation,
  Petersson formula and Voronoi summation is heuristically straightforward;
  technical complications arise because of ``weight $2$.''
  The proof of Theorem \ref{thm:special-main} does not use
  the triple product formula.}
The off-diagonal estimate \eqref{eq:offdiag-estimate}
represents \emph{square-root cancellation beyond the Lindel{\"o}f hypothesis},
and does not follow from standard conjectures
such as GRH.
We discuss the error terms further in \S\ref{sec-optimal-error-term}.
For orientation,
we record that $|\mathcal{F}_N| \asymp 2^{2 N}$ and typically
$C(\varphi \times \varphi \times \Psi_k) \asymp 2^{4 N}$.\footnote{
  We remark also that
  by replacing the $\varphi, \Psi_k$
  with some
  modifications
  belonging to the automorphic representation
  that they generate,
  it should be possible to derive explicit analogues of
  \eqref{eq:diag-estimate}, \eqref{eq:offdiag-estimate}
  without
  the factors $c_0(\Psi_k,\varphi)$.
  For instance,
  it should be possible to achieve this
  by leaving $\Psi_k$ as it is
  and replacing $\varphi$ by
  the $2$-adic microlocal lift
  introduced in \cite{nelson-padic-que}
  when $\varphi$ is principal series
  and by an analogous construction involving non-split tori
  when $\varphi$ is supercuspidal.
}

\subsection{Application to quantum unique ergodicity}
We touch briefly on
the relationship of our results
to the study of worst-case
behavior of the measures $\mu_\varphi$.
It is expected that $\mu_{\varphi_N} \rightarrow \mu$ for any
sequence of $\varphi_N \in \mathcal{F}_N$ with
$N \rightarrow \infty$, i.e., that
$\mu_{\varphi_N}(\Psi_k) = o(1) := o_{N \rightarrow \infty}(1)$ for
$k=1,2$.  This expectation
is
much weaker than
the prediction
$|\mu_{\varphi_N}(\Psi_k)| \leq 2^{-(1 + o(1))N/2}$ of the
Lindel{\"o}f hypothesis,
but remains an open variant of the arithmetic
quantum unique ergodicity conjecture as in
\cite{PDN-HQUE-LEVEL,MR3110797,2014arXiv1409.8173H}
which can be shown to follow from subconvexity.  The special case
in which $\varphi_N$ generates a principal series representation
of $\GL_2(\mathbb{Q}_2)$ was recently confirmed unconditionally in
\cite{nelson-padic-que}.  Theorem
\ref{thm:special-main} and Chebyshev's inequality imply that the
Lindel{\"o}f prediction is essentially sharp, that it holds for
a density $1 - o(1)$ subset of $\mathcal{F}_N$, and that all but
a density $O(|\mathcal{F}_N|^{-1/2+o(1)})$ subset of
$\varphi_N \in \mathcal{F}_N$ satisfy
$\mu_{\varphi_N} \rightarrow \mu$ as $N \rightarrow \infty$
(compare with
\cite{MR1361757,luo-sarnak-mass,MR2103474}).

\subsection{Method}
We now indicate in high-level terms why is it natural to study
the quantum variance problem using theta functions and the
diagram \eqref{eqn:seesaw}; a concrete implementation of this
discussion may be found in \S\ref{sec:reduction-proof}.

The arithmetic quotients $\mathbf{X}$ of interest to us are
parametrized by quaternion $\mathbb{Q}$-algebras $B$; one can
take for $\mathbf{X}$ the adelic quotient
$B^\times(\mathbb{Q}) \backslash B^\times(\mathbb{A})$ or a
further quotient thereof.  Given a pair of cusp forms
$\Psi_1, \Psi_2$ on $\mathbf{X}$ with trivial central character,
we would like to understand quantum variance sums of the shape
\begin{equation}\label{eq:section-method-desired-qv-sums-nice-F}
  \sum_{\varphi \in \mathcal{F}} \langle |\varphi|^2, \Psi_1
\rangle \langle \Psi_2, |\varphi|^2 \rangle
\end{equation}
for some ``nice enough family of automorphic forms''
$\mathcal{F}$.

Our first observation is that sums of this general shape are
related via the theta correspondence to four-fold integrals of
theta functions:
Suppose
given a pair of elementary theta functions
$\theta_1, \theta_2$
and ternary theta lifts $h_1,h_2$ of the cusp forms
$\Psi_1,\Psi_2$.
More precisely,
$\theta_1,\theta_2,h_1,h_2$
are functions
on
$\SL_2(\mathbb{Q}) \backslash \Mp_2(\mathbb{A})$ 
attached to Schwartz--Bruhat functions
$\phi_1', \phi_2'$ on $\mathbb{A}$
and
$\phi_2'', \phi_2''$  on $B^0(\mathbb{A})$,
where $B^0$ is the trace zero subspace of $B$.
The Parseval
formula and seesaw duality \eqref{eqn:seesaw}
then
give identities
roughly of the shape
\begin{equation}\label{eq:general-qv-sums-seesaw}
    \langle \theta_1 h_1, \theta_2 h_2 \rangle
  =
  \left(
  \begin{array}{lr}
    \text{ generalized quantum variance sums } \\
    \quad \quad \quad \text{ tested against } \Psi_1, \Psi_2
  \end{array}
  \right).
\end{equation}
We explain this shortly
(\S\ref{sec:quant-vari-sums})
in a ``toy example'' 
and
in the body of this article very concretely.


The identity \eqref{eq:general-qv-sums-seesaw}
(together with its extension to non-pure tensors
$\phi_i = \sum_{\alpha} \phi_{i,\alpha}' \otimes
\phi_{i,\alpha}''$) suggests a
natural strategy for attacking the quantum variance problem.
It is not \emph{a priori} clear that this strategy should
succeed,
and its implementation
requires further novelties:
\begin{enumerate}
\item
  The precise sums appearing on the RHS
  of \eqref{eq:general-qv-sums-seesaw} depend heavily
  upon the local data $\phi_1', \phi_2', \phi_1'', \phi_2''$.
  One must thus confront the \emph{inversion
    problem} for a family $\mathcal{F}$ of automorphic forms
  which consists of
  exhibiting local data $\phi_i$ so
  that the RHS of \eqref{eq:general-qv-sums-seesaw} resembles the
  quantum variance sums
  \eqref{eq:section-method-desired-qv-sums-nice-F}
over $\mathcal{F}$.
\item
  There is then the
unprecedented \emph{analytic problem} of asymptotically
evaluating the integral of four-fold products of theta functions
on the LHS of \eqref{eq:general-qv-sums-seesaw}
as the local data vary;
we address this by proving asymptotic formulas of the shape
\begin{equation}\label{eq:asymp-inn-prod-expectation}
    \langle \theta_1 h_1, \theta_2 h_2 \rangle
  \approx
  \langle \theta_1 , \theta_2 \rangle 
  \langle h_1 , h_2 \rangle.
\end{equation}
\end{enumerate}
The two problems are
intertwined in that one does not expect the nice
asymptotic behavior
\eqref{eq:asymp-inn-prod-expectation}
for completely general variation of the
local data.

\subsection{Quantum variance sums with Eisenstein observables
  via $L$-functions}\label{sec:quant-vari-sums}
We  include this subsection to convey the flavor of
the first step \eqref{eq:general-qv-sums-seesaw} of our method
to readers having some familiarity with integral representations
of $L$-functions,
but not necessarily with the theta correspondence
and seesaw duality.

We aim to illustrate 
\eqref{eq:general-qv-sums-seesaw}
for a ``toy problem''
(unrelated
to the main result of this article)
involving the Eisenstein series $E_s$ on $\SL_2(\mathbb{Z})
\backslash \mathbb{H}$,
defined for a complex
parameter $s$ by meromorphic continuation of the sums
\[
E_s(z)
:=
\frac{1}{2}
\sum_{ \substack{
    (c,d) \in \mathbb{Z}^2 - \{(0,0)\}  : \\
    \gcd(c,d) = 1
  }
}
\frac{y^s}{|c z + d|^{2 s}}
= y^s + \dotsb.
\]
For $\Re(s)$ large enough
and up to (important) normalizing factors,
Rankin--Selberg theory
and Shimura's symmetric
square integral representation
give
\begin{align*}
  L(\varphi \times \varphi, s)
  &:=
    \zeta(2 s) \sum_{n=1}^{\infty} \frac{|\lambda_\varphi(n)|^2}{n^{s}}
    \approx \mu_{\varphi}(E_s), \\
  L(\sym^2 \varphi, s)
  &:=
    \zeta(2 s) \sum_{n=1}^{\infty} \frac{\lambda_\varphi(n^2)}{n^{s}}
    \approx
    \langle \theta \tilde{E}_s,  \varphi \rangle,
\end{align*}
for some Jacobi theta function $\theta$ and half-integral
weight Eisenstein series
$\tilde{E}_s$.
Combining these identities with the factorization\footnote{
  This is the special case $|\lambda(n)|^2 = \sum_{d|n}
\lambda(n^2/d^2)$
of the Hecke multiplicatively exploited by Luo--Sarnak--Zhao.
}
$L(\varphi \times \varphi, s)
= \zeta(s) L(\sym^2 \varphi, s)$
gives with $\tilde{E}_{s}' := \zeta(s)
\tilde{E}_{s}$ that
\begin{equation}\label{eqn:linearize-rs-shim}
  \mu_\varphi(E_s)
  \approx
  \langle
  \theta
  \tilde{E}_{s}',
  \varphi 
  \rangle.
\end{equation}
Note that
$\varphi \mapsto \langle \theta \tilde{E}_{s}',
\varphi \rangle$
is \emph{linear}, unlike $\varphi \mapsto \mu_\varphi(E_s)$,
and that the non-linear identity \eqref{eqn:linearize-rs-shim},
like the triple product formula, applies only when
$\varphi$ is an eigenfunction.\footnote{
  The analysis of triple product averages mentioned above
  exploits
  the related identity
  $L(\varphi \times \varphi \times \Psi,1/2)
  = L(\sym^2 \varphi \times \Psi,1/2)
  L(\Psi,1/2)$.
}

Consider now the pair of unitary Eisenstein series
$\Psi_1, \Psi_2 := E_{1/2+i t_1}, E_{1/2+i t_2}$
attached to $t_1, t_2 \in \mathbb{R}$.
Let
$\mathcal{F}$ be an orthonormal basis of eigenfunctions for the
discrete spectrum of
$L^2(\SL_2(\mathbb{Z}) \backslash \mathbb{H})$; write $\dotsb$
for the corresponding contribution of the continuous spectrum,
and ignore (for the purposes
of this formal discussion) that most of the sums/integrals written below are
divergent.
From \eqref{eqn:linearize-rs-shim} and Parseval, one obtains with
$h_1, h_2 := \tilde{E}_{1/2+i t_1}', \tilde{E}_{1/2+i t_2}'$
that
\begin{equation}\label{eq:qv-vs-fourfold}
  \sum_{\varphi \in \mathcal{F}}
\mu_\varphi(\Psi_1) \overline{\mu_\varphi(\Psi_2)} + \dotsb 
\approx 
\sum_{\varphi \in \mathcal{F}}
\langle \theta h_1, \varphi
\rangle
\langle \varphi, \theta h_2
\rangle
+ \dotsb
=
\langle \theta h_1,  \theta h_2
\rangle.
\end{equation}
Formally,
quantum variance sums equal 
integrals of
four-fold products of half-integral
weight theta functions.

The theta correspondence and seesaw duality
give non-formal analogues
\eqref{eq:general-qv-sums-seesaw}
of 
\eqref{eq:qv-vs-fourfold};
the point of this article is thus to
determine
quantum variance asymptotics
by deriving some such analogues
and then proving
asymptotic formulas
(\ref{eq:asymp-inn-prod-expectation})
for the inner products that arise.

\subsection{Further perspectives}
We conclude this introduction by noting some reasons to
have anticipated an approach to the quantum variance problem using
the theta correspondence and half-integral weight forms:
\begin{enumerate}
\item The
quantum variance theorems of Luo--Sarnak--Zhao give new proofs
that $L(\pi,\tfrac{1}{2}) \geq 0$ (see \cite[p773,
(2)]{MR2103474}); earlier proofs as in \cite{MR629468,MR1244668}
used half-integral weight forms.  
\item The Rallis inner product
formula implies that the composition of the
Maass--Shintani--Waldspurger lift (for fixed local data) with
the Petersson inner product has similar properties to those of
the limiting bilinear form identified in the works of
Luo--Sarnak--Zhao;
the present work
confirms that this
similarity is no coincidence.  
\item Any progress on the quantum
variance problem presupposes some progress towards the
multiplicity one theorem; the known constructive proof of that,
due to Eichler and Shimizu, used the theta correspondence much
as we do here.
\end{enumerate}

\subsection*{Acknowledgements}
We are grateful to many people for their contributions to this
project.  P. Sarnak introduced us to his work with W. Luo and
emphasized the problem of obtaining analogous results on compact
quotients; we thank him also for much encouragement, inspiration
and helpful feedback concerning earlier versions of this work.
Ph. Michel patiently entertained and offered helpful feedback
and encouragement on some presentations of earlier forms of the
method.  Conversations with K. Prasanna in connection with the
work \cite{MR3356036} were ultimately
helpful also
in connection with the present work.
A. Venkatesh offered helpful encouragement, feedback and
comments which have improved our exposition.  We thank also
{\"O}. Imamoglu, H. Iwaniec, E. Kowalski, D. Ramakrishnan,
A. Saha, J. Tsimerman for their encouragement and interest and
W.T. Gan, B. Gross, R. Holowinsky, N. Templier, Z. Rudnick,
C. Skinner, K. Soundararajan for their feedback, interest and/or
questions on various aspects of this work.

We gratefully acknowledge the support of
NSF grant OISE-1064866
and SNF grant SNF-137488 during the work leading
to this paper.

\section{Preliminaries}\label{sec:preliminaries}
We record definitions, some postponed from \S\ref{sec-1}.

\subsection{Generalities}\label{sec:generalities}
Recall that $G := \GL_2(\mathbb{Q}_2)$, $K := \GL_2(\mathbb{Z}_2)$.
Let $B$ be the quaternion algebra
ramified at $\{\infty, 23\}$.
Let $R \leq B$ be a maximal order.
Choose an embedding of $B$ into the matrix algebra
$M_2(\mathbb{Q}_2)$
under which $R$ embeds in $M_2(\mathbb{Z}_2)$.
Take for $\Gamma$ the image of $R[1/2]^\times$ under this
embedding.
Then $\Gamma < G$ is a discrete cocompact subgroup.
The quotient $\mathbf{X} := \Gamma \backslash G$ is compact.

The reduced norm and trace on $B$ and its extensions
are denoted $\nr, \tr$.
A superscripted $0$ denotes ``trace zero subgroup.''
A subscripted $2$ denotes ``$2$-adic completion.''

\subsection{Hecke operators}\label{sec:hecke-operators}
For an odd positive integer $n$,
the Hecke operator $T_n$
acts on functions $\varphi : \mathbf{X} \rightarrow \mathbb{C}$
by
$T_n \varphi(x) := \sum _{\alpha \in M_n / \Gamma }
\varphi(\alpha^{-1} x)$
where
$M_n := R[1/2] \cap \nr^{-1}(n \mathbb{Z}[1/2]^\times)$.
The group $G$ acts on such functions by right translation, and
commutes with the Hecke operators.

\subsection{Automorphic forms}\label{sec:automorphic-forms}
Denote by
$\mathcal{A}(\mathbf{X})$ the space of \emph{smooth} functions
$\varphi : \mathbf{X} \rightarrow \mathbb{C}$, i.e., those that
are right invariant by some open subgroup of $G$.
For a subgroup $S \leq G$,
denote by $\mathcal{A}(\mathbf{X})^S$ the subspace
of right $S$-invariant functions,
or equivalently, those that factor through
the quotient $\mathbf{X}/S
= \Gamma \backslash G / S$.
We may and shall identify $\mathcal{A}(\mathbf{X})^S$
with a space of functions on $\mathbf{X}/S$.
The Hecke operators act on it.

\subsection{Measures}\label{sec:measures}
Equip $G$ with the Haar measure assigning volume $2$
to $K$,
and $\mathbf{X}$ with the quotient measure,
denoted simply $\int_{\mathbf{X}}$;
define $L^2(\mathbf{X})$ with respect to that measure.

Recall that $\mathbf{Y} := \Gamma \backslash G / K = \mathbf{X}
/ K$.
By the convention of \S\ref{sec:automorphic-forms},
functions on $\mathbf{Y}$ are identified with
right $K$-invariant functions on $\mathbf{X}$.
For each $E \in \mathbf{Y}$,
choose a representative $g_E \in G$.
Set $R_E := R[1/2] \cap g_E R_2 g_E^{-1}$; it is a maximal order
in $B$.
Set $w_E := \# R_E^\times / \mathbb{Z}^\times
= (1/2) \# R_E^\times$.
For $\Psi : \mathbf{Y} \rightarrow \mathbb{C}$,
one then has
$\int_{\mathbf{X}} \Psi = \sum_{E \in \mathbf{Y}} \Psi(E) / w_E$.

\subsection{Families of balanced newvectors}\label{sec:famil-balanc-newv}
Let $N$ be a positive integer.
Recall the definition \eqref{eq:defn-K-neg-N-pos-N}
of $K_{-N_1..N_2}$.
By the multiplicity one theorem and local newvector theory,
there is a unique (up to signs) maximal subset $\mathcal{F}_N
\subset \mathcal{A}(\mathbf{X})^{K_{-N..N}}$
with the properties:
\begin{itemize}
\item Each $\varphi \in \mathcal{F}_N$ is an eigenfunction
  for the Hecke operator $T_n$ for all odd natural numbers $n$.
  We accordingly write
  $T_n \varphi
  = \sqrt{n}\lambda_\varphi(n) \varphi$.
\item Each $\varphi \in \mathcal{F}_N$ generates an irreducible
  representation of $G$ under right translation.
\item Each $\varphi \in \mathcal{F}_N$ is real-valued and
  orthonormal,
  and any two $\varphi, \varphi ' \in \mathcal{F}_N$ are
  orthogonal
  to one another.
\item Each $\varphi \in \mathcal{F}_N$
is orthogonal
to any function on $\mathbf{X}$
that is $K_{-N_1..N_2}$-invariant
from some ordered pair $(N_1, N_2)$
with $N_1 \leq N$ and $N_2 \leq N$ and $(N_1,N_2) \neq (N,N)$.
\end{itemize}
The multiplicity one theorem implies moreover
that each $\varphi \in \mathcal{F}_N$
is determined by its system of Hecke eigenvalues
$\lambda_\varphi(n)$.
The family $\mathcal{F}_N$ is analogous
to (and in Hecke-equivariant bijection with; see \S\ref{sec:eichl-langl-lifts})
the set of normalized
newforms of weight $2$ on $\Gamma_0(2^{2 N} \cdot 23)$.

\subsubsection{Remark}
We consider here the families $\mathcal{F}_N$ arising from the
``balanced'' subgroups $K_{-N..N}$.  Our method applies to the
other subgroups, such as the unbalanced ones $K_{0..N}$ more
commonly denoted``$K_0(2^{N})$'', but the results
obtained are nicer for those considered here.

\subsection{Conventions on modular forms}\label{sec:some-conventions}
We denote by $z := x + i y$ a typical element of the upper
half-plane
and write $q := e^{2 \pi i z}$.
``Modular'' always means ``modular with respect to
some congruence subgroup $\Gamma'$ of
$\Gamma_0(4)$.''
We analytically
normalize holomorphic modular forms $\Phi$ of weight
$k \in \tfrac{1}{2} \mathbb{Z}_{\geq 0}$ by the factor
$y^{k/2}$,
so that $q$-expansions read
$\Phi(z) = y^{k/2} \sum  a_n q^n$
and
dilations $\Phi(z) \mapsto \Phi(a z)$
for $a \in \mathbb{Q}^\times_+$
are unitary for the Petersson inner product, which we
normalize by
$\left\langle \Phi_1, \Phi_2 \right\rangle := \int_{z \in \Gamma'
  \backslash \mathbb{H}} \Phi_1(z) \overline{\Phi_2(z)} \, d
\nu(z)$
for $\Gamma'$ small enough
in terms of $\Phi_1, \Phi_2$
and $\nu = \nu_{\Gamma'}$ the probability measure
that is a multiple of $y^{-2} \, d x \, d y$.
Thus, for instance, $\langle 1,1 \rangle = 1$,
regardless of the congruence quotient $\Gamma' \backslash
\mathbb{H}$
on which the constant function $1$ is regarded as living.
Write $\|\Psi\| := \langle \Psi, \Psi \rangle^{1/2}$.

\subsection{Eichler/Jacquet--Langlands lifts}\label{sec:eichl-langl-lifts}
For $N \geq 1$ and $\varphi \in \mathcal{F}_N$, set
\[
\Phi_\varphi (z) := y \sum _{\substack{
    n \geq 1 : \\ \gcd(n,2) = 1 
  }
}
\sqrt{n} \lambda_\varphi(n) q^n
\]
It is known\footnote{
  This can be deduced from results of Eichler
  (see \cite{MR0485698}, \cite[\S2]{MR579066}) and Atkin--Lehner
  \cite[Thm 3 (iii)]{MR0268123}.
}
that
$\Phi_\varphi$
defines a weight $2$ newform on $\Gamma_0(2^{2N} \cdot 23)$
with Hecke eigenvalues
$\sqrt{n} \lambda_{\varphi}(n)$ for odd natural numbers $n$.

\subsection{Harmonic weights}
\label{sec-1-3-7}
Recall from \S\ref{sec-1} that $\iota_{\varphi} := L^{(2)}(\ad \varphi,1)$.
It is known \cite{HL94} that $\iota_{\varphi} = 2^{o(N)}$ as $N
\rightarrow \infty$.
By the theory of Eisenstein series,\footnote{
  see for instance \cite[p138]{MR2061214},
  \cite[\S5.1]{MR2504745}}
$\|\Phi_{\varphi}\|^2$ is the residue
as $s \rightarrow 1^+$
for some
sufficiently divisible $M \in \mathbb{Z}_{\geq 1}$
of
the integral
\[\int_{\Gamma_1(M) \backslash \mathbb{H}}
(\sum_{\gamma \in \Gamma_\infty\backslash \Gamma_1(M) }
\Im(\gamma z)^s )
|\Phi_\varphi|^2(z)
\, y^{-2} \, d x \, d y\]
which unfolds to
$\int_{y =0}^{\infty}
y^s
\sum_n
n |\lambda_\varphi(n)|^2
y^2 e^{- 4 \pi n y}
\, y^{-2} \, d y$
and then simplifies
to
\[
\int_{y=0}^\infty y^{s+1} e^{- 4 \pi y} \, \frac{d y}{y}
\sum_{\substack{
    n \in \mathbb{Z}_{\geq 1} :  \\
    (n,2) = 1
  }
}
\frac{|\lambda_\varphi(n)|^2}{n^{s}}
=
\frac{  \Gamma(s+1)}{(4 \pi)^{s+1}}
\zeta_{23}(s + 1)
\frac{L^{(S)}(\ad \varphi,s)\zeta^{(S)}(s)}{\zeta^{(S)}(2 s)}
\]
with $S := \{2, 23\}$
and $\zeta_p(s) := (1 - p^{-s})^{-1}, \zeta^{(S)}(s) := \zeta(s) / \prod_{p \in S} \zeta_p(s)$.
Taking residues
and using that $L_{23}(\ad \varphi,s) = \zeta_{23}(s+1)$,
we obtain
$\|\Phi_\varphi\|^2
=
\kappa_1
\iota_{\varphi}$
with
\begin{equation}\label{eq:kappa-1-defn}
  \kappa_1 :=
  \frac{
    1
  }
  {
    (4 \pi)^2
    \zeta^{(S)}(2)
    \zeta_2(1)
    \zeta_{23}(1)
  }.
\end{equation}
Define $\kappa_0 > 0$
by requiring that $\kappa_1 = \kappa_0^{-2} 2^{-2}$.

\section{Definitions of some theta functions}
\label{sec:defn-ternary-thetas}
The purpose of this section is to define some specific
weight $3/2$ cuspidal theta functions $h_k$ ($k=1,2$)
belonging to the Shintani--Waldspurger lifts of the
automorphic forms $\Psi_k : \mathbf{Y} \rightarrow \mathbb{C}$
defined in \S\ref{sec-1}.
The precise definition of the $h_k$ is
not immediately enlightening;
it is the output of local computations
to be discussed later.

Recall that for each element $E$ of the three-element set
$\mathbf{Y} = \Gamma \backslash G / K = \mathbf{X} / K$
we have defined a representative group element $g_E \in G$
and a maximal order $R_E$ (see \S\ref{sec:measures}).
Set $S_E := \mathbb{Z} + 2 R_E$.
Recall the reduced norm and trace
$\tr, \nr : R_E, S_E \rightarrow \mathbb{Z}$
and the trace zero subgroups $R_E^0, S_E^0$.
The latter are rank three lattices
which we regard as ternary quadratic forms with respect to
$\nr$.
Because $B$ splits at $2$, 
there are isomorphisms
$R_E \otimes_{\mathbb{Z}} \mathbb{Z}_2 \xrightarrow{\cong }
M_2(\mathbb{Z}_2)$
taking
$(\tr, \nr)$ to $(\tr,\det)$ and 
$S_E^0 \otimes_{\mathbb{Z}} \mathbb{Z}_2$ to
$\left\{ \begin{pmatrix}
    a & 2 b \\
    2c & - a
  \end{pmatrix} : a, b, c \in \mathbb{Z}_2 \right\}$.
There are thus
isomorphisms $\ell_E : \mathbb{Z}^3 \rightarrow   S_E^0$
so that the quadratic forms
$Q_E : \mathbb{Z}^3 \rightarrow \mathbb{Z}$,
$Q_E(a,b,c) := \nr(\ell_{E}(a,b,c))$
satisfy
$Q_E(a,b,c) \equiv \det (\begin{pmatrix}
  a & 2 b \\
  2 c & -a
\end{pmatrix})$
mod $4$.
For each $E \in \mathbf{Y}$, define quadratic characters
$\chi_1^E, \chi_2^E, \chi_3^E : S_E^0/2 S_E^0 \rightarrow \{\pm
1\}$
by writing a given element $\beta \in S_E^0$ in coordinates
$\beta = \ell_{E}(a,b,c)$ and setting
$\chi_1^{E}(\beta), \chi_2^{E}(\beta), \chi_3^{E}(\beta) :=
(-1)^{b + c}, (-1)^{a+c}, (-1)^{a+b}$;
some coordinate-free definitions of the $\chi_i^E$ are recorded
in \S\ref{sec:some-quadratic-characters}.
For $k=1,2$, we define
the theta function $h_k$ by
the equivalent formulas
\begin{align}\label{eqn:ternary-cusp}
  h_k(16 z)
  &:=
    \kappa_0
    y^{3/4}
    \sum_{E \in \mathbf{Y}}
    \frac{\Psi_k(E)}{w_E}
    \sum_{\beta \in S_E^0}
    \sum_{i =1,2,3} \chi_i^E(\beta) q^{\nr(\beta)} \\
  &= \nonumber
    \kappa_0
    y^{3/4}
    \sum_{E \in \mathbf{Y}}
    \frac{\Psi_k(E)}{w_{E}}
    \sum_{a,b,c \in \mathbb{Z}}
    \{(-1)^{b+c} + (-1)^{a+c} + (-1)^{a+b}\}
    q^{Q_E(a,b,c)} \\
  &= \nonumber
    \kappa_0
    y^{3/4}
    \sum_{D \geq 0}
    \mu_D(\Psi_k) q^D
\end{align}
where in the final expression,
$\mu_D$ is the measure on $\mathbf{Y}$ given by
\begin{align*}
\mu_D(E)
&:=
w_E^{-1}
\sum_{\beta \in S_E^0 : \nr(\beta) = D}
  \sum_{i=1,2,3} \chi_i^E(\beta)
  \\
  &= w_{E}^{-1}
\sum_{(a,b,c) \in \mathbb{Z}^3 : Q_E(a,b,c) = D}
\{
(-1)^{b+c}
+
(-1)^{a+c}
+
(-1)^{a+b}
    \}.
\end{align*}
We record
for future reference an equivalent definition of the $h_k$.
Let $E \in \mathbf{Y}$ correspond to the identity coset,
so that $R_E = R$ and $S_E = S := \mathbb{Z} + 2 R$.
We thereby obtain characters $\chi_i := \chi_i^E : S^0
\rightarrow \{\pm 1\}$.
They extend by continuity to $S_2^0$
and then extend by zero to elements
$\chi_i$ of the Schwartz--Bruhat space $\mathcal{S}(B_2^0)$.
Define $\phi'' \in \mathcal{S}(B_2^0)$
by
\begin{equation}\label{eq:defn-of-phi-double-prime}
  \phi''(\beta) := 2^{-3} \kappa_0 \sum_{i=1,2,3} \chi_i(4 \beta).
\end{equation}
Define the theta kernel
$\theta '' : \mathbb{H} \times \mathbf{X} \rightarrow \mathbb{C}$
by
\[
\theta ''(z,g) := y^{3/4}
\sum_{\beta \in R[1/2]^0}
\phi ''(g^{-1} \beta g)
q^{\nr(\beta)}.
\]
It is modular of weight $3/2$ in the first variable.
Recalling our measure normalizations (\S\ref{sec:measures}),
we find easily that
\[
h_k(z) = \int_{g \in \mathbf{X}} \Psi_k(g) \theta ''(z,g).
\]

We refer to \cite[\S12]{MR894322} for a lucid discussion
of the basic properties of
some theta functions
defined by analogy to the $h_k$ but using
the simpler measures $\mu_D^0(E) := w_E^{-1} \#\{\beta \in S_E^0 : \nr(\beta) = D\}$
instead of the $\mu_D$.

\section{Comparison with arithmetic variance}
\label{sec:appl-comp}



Luo--Rudnick--Sarnak \cite{MR2504745} determined the limiting
variance of the family of measures on
$\SL_2(\mathbb{Z}) \backslash \SL_2(\mathbb{R})$ defined by
closed geodesics ordered by discriminant.  They termed this the
\emph{arithmetic variance} (see also
\cite{MR2776068,MR2890490}).
It turns out to be remarkably
close to the limiting quantum variance of microlocal lifts (see
\cite[Remark 2]{2013arXiv1303.6972S}).

The analogue in our
setting concerns representation numbers of ternary quadratic
forms.
Recall the definition of the constant $\kappa_0$ from
\S\ref{sec-1-3-7}, that of the $2 \times 2$ matrix $V^\infty$
from Theorem \ref{thm:special-main}, and that of the measures
$\mu_D$ from \S\ref{sec:appl-comp}.
\begin{theorem}[Arithmetic variance]\label{thm:arith-var}
  For $k,l \in \{1,2\}$,
  \[
  \lim_{x \rightarrow \infty}
  \frac{1}{x}
  \sum_{0 < D < x}
  \frac{\mu_D(\Psi_k)}{D^{1/4}} \frac{\mu_D(\Psi_l)}{D^{1/4}}
  =
  \frac{2}{\kappa_0^2 (4 \pi)^{-3/2} \Gamma(3/2)}  V_\infty^{k l}.
  \]
\end{theorem}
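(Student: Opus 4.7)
The plan is to extract the limit from the Rankin--Selberg convolution of $h_k$ and $h_l$, whose residue at $s=1$ is proportional to $\langle h_k, h_l \rangle$, and then to identify $\langle h_k, h_l \rangle$ with $2 V_\infty^{kl}$ by appeal to the theta-correspondence machinery that underlies the proof of Theorem \ref{thm:special-main}. Concretely, set $g_k(z) := h_k(16z) = \kappa_0 y^{3/4}\sum_{D \geq 0} \mu_D(\Psi_k) q^D$ from \eqref{eqn:ternary-cusp}; unfolding the cusp-$\infty$ Eisenstein series $E_s$ on an appropriate congruence quotient $\Gamma' \backslash \mathbb{H}$ yields, for $\Re(s)$ large,
\[
\int_{\Gamma' \backslash \mathbb{H}} g_k(z)\, \overline{g_l(z)}\, E_s(z)\, \frac{dx\,dy}{y^2} = \frac{\kappa_0^2\,\Gamma(s+\tfrac12)}{(4\pi)^{s+1/2}} \sum_{D \geq 1} \frac{\mu_D(\Psi_k)\,\mu_D(\Psi_l)}{D^{s+1/2}}.
\]
The integral continues meromorphically in $s$ with a unique simple pole at $s=1$ (coming from $E_s$) whose residue equals $\langle g_k, g_l \rangle = \langle h_k, h_l \rangle$, the second equality by unitarity of dilations recorded in \S\ref{sec:some-conventions}. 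A Tauberian argument (Wiener--Ikehara, or Landau when $k=l$ by positivity of the diagonal coefficients; the convexity bounds for the Dirichlet series on vertical lines needed as input follow from the meromorphic continuation above) then converts this into
\[
\sum_{0 < D < x} \frac{\mu_D(\Psi_k)\,\mu_D(\Psi_l)}{D^{1/2}} \sim \frac{(4\pi)^{3/2}}{\kappa_0^2\,\Gamma(\tfrac32)}\,\langle h_k, h_l \rangle \cdot x.
\]

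The remaining task is the identity $\langle h_k, h_l \rangle = 2 V_\infty^{kl}$. For $k \neq l$, this reduces to Hecke orthogonality of theta lifts: the Shintani--Waldspurger correspondence implicit in the definition of the $h_k$ is Hecke-equivariant, and since $\Psi_1, \Psi_2$ have distinct Hecke eigenvalue systems, $h_1$ and $h_2$ lie in distinct isotypic components and are thus orthogonal. For $k = l$, one invokes the Rallis inner product formula for the seesaw \eqref{eqn:seesaw}: $\langle h_k, h_k \rangle$ factors as the central value $L(\Psi_k, 1/2)$ times a product of local zeta integrals for the Schwartz--Bruhat data defining $\theta''$, with the archimedean and $23$-adic local factors combining into the rational constant $\pi^2/69$, and the $2$-adic local integral against $\phi''$ from \eqref{eq:defn-of-phi-double-prime} unfolding into the polynomial factor $P(\lambda_{\Psi_k}(2))$.

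The main obstacle is this last step: explicitly executing the Rallis inner product formula for the particular $\phi''$ chosen in \S\ref{sec:defn-ternary-thetas}, tracking all local normalization constants (the power of $\kappa_0$, the character sums over the $\chi_i^E$, the balanced ramification pattern at $2$, and the definite-quaternion contribution at $23$) so that the final product of local factors matches precisely the coefficient $2$ that appears in front of $V_\infty^{kl}$. The Rankin--Selberg unfolding and the Tauberian conversion are routine once the level and Eisenstein series are fixed, and the off-diagonal vanishing is automatic from Hecke equivariance of the theta lift; the entire arithmetic content of the asymptotic is concentrated in the local theta integral computation, which is structurally identical to the one that produces $V_\infty$ in Theorem \ref{thm:special-main}.
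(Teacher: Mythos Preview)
Your proposal is correct and follows essentially the same route as the paper: Rankin--Selberg unfolding of $h_k \overline{h_l}$ against an Eisenstein series, a Tauberian argument to pass from the residue at $s=1$ to the partial-sum asymptotic, and the identification $\langle h_k, h_l \rangle = 2 V_\infty^{kl}$ via Hecke orthogonality for $k \neq l$ and the Rallis inner product formula for $k = l$. The paper likewise defers the explicit Rallis computation (your ``main obstacle'') to a dedicated section where the local integrals at $\infty$, $2$, and $23$ are evaluated against the specific Schwartz data $\phi''$, exactly as you anticipate.
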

Despite the superficial similarity that both involve
limits of quadratic sums of measures on $\mathbf{Y}$, Theorem \ref{thm:arith-var} is
much simpler than Theorem \ref{thm:special-main}: it follows as
in \cite{MR2504745,MR2776068,MR2890490} from the Rankin--Selberg
method applied to $h_k$
followed by a Tauberian argument
and the inner product calculation
\begin{equation}\label{eq:inner-product-hkhl}
  \langle h_k, h_l \rangle = 2 V_{\infty}^{k l}
\end{equation}
to be discussed shortly.
The factor $(4 \pi)^{-3/2} \Gamma(3/2)$
arises as 
$\int_{y=0}^{\infty}
|y^{3/4} e^{- 2 \pi y}|^2
\, \frac{d y}{y}$.

The ``coincidence'' that the same limiting matrix
$V_\infty$
appears
in Theorems
\ref{thm:special-main}
and \ref{thm:arith-var}
may be understood as
an instance of the correspondence principle
(compare with \cite[\S1.4.6]{MR2504745}).

\section{Reduction of the proof of the main result\label{sec:reduction-proof}}
\label{sec-1-10}
We now reduce the proof of Theorem \ref{thm:special-main}
to that of some independent assertions whose proofs
may be studied in any order.
Retain
the notation and conventions of \S\ref{sec:appl-comp},
particularly the definition \eqref{eqn:ternary-cusp}
of $h_k$.
Define the following Jacobi theta function, which is modular of weight $1/2$:
\[
\theta(z) := y^{1/4} \sum_{m \in \mathbb{Z}: \gcd(m,2)=1} q^{m^2}.
\]

\subsection{The reduction}\label{sec:the-reduction}
To prove Theorem \ref{thm:special-main},
it suffices to show more precisely
for $k,l \in \{1,2\}$
and large enough $N$
that
\begin{align}\label{eq:sketch-1}
    2^N V_{N}^{kl}
    &=
      \int_{\Gamma' \backslash \mathbb{H}}
      \theta(z)
      h_k(2^{2N} z)
      \overline{
      \theta(z)  h_l(2^{2N} z)
      }
      \, d \nu(z)
    \\
    \label{eq:sketch-2}
    &=
      \int_{\Gamma' \backslash \mathbb{H}}
      |\theta|^2(z)
      h_k \overline{h_l}(2^{2N} z)
      \, d \nu(z)
    \\
    \label{eq:sketch-3}
    &=
      \int_{\Gamma' \backslash \mathbb{H}}
      |\theta|^2 \, d \nu
      \int_{\Gamma' \backslash \mathbb{H}}
      h_k \overline{h_l} \, d \nu
      + O(N 2^{-N})
    \\
    \label{eq:sketch-4}
    &=
      V_{\infty}^{kl} + O(N 2^{-N})
\end{align}
where
$\Gamma'$
denotes
a sufficiently small
congruence subgroup of $\SL_2(\mathbb{Z})$.
(Recall from \S\ref{sec:some-conventions}
that $d \nu(z) = d \nu_{\Gamma '}(z)$ is always a probability measure.)
The above steps
encapsulate the heart of our method.
The novelty lies  primarily in
the algebraic input \eqref{eq:sketch-1}, the analytic input \eqref{eq:sketch-3}, and the overall framework of the argument.

\subsection{Inner products of theta lifts}\label{sec:inner-products-theta}
We first discuss the easy steps of 
\S\ref{sec:the-reduction},
or more precisely,
those that follow readily from
extensive machinery developed by others.
The second step
\eqref{eq:sketch-2} is trivial.
The fourth
step \eqref{eq:sketch-4} holds in the more precise form
\begin{equation}\label{eqn:sketch-4-precise}
  \|\theta\|^2
  \langle h_k, h_l \rangle
  = V_\infty^{k l}.
\end{equation}
The identities \eqref{eq:inner-product-hkhl} and
\eqref{eqn:sketch-4-precise}
are equivalent because $\|\theta\|^2 = 1/2$ (see \S\ref{sec-3-10}, \eqref{eq:theta-l2-norm}).
The proof of \eqref{eqn:sketch-4-precise}
divides according to whether $k = l$ or not.  If
$k \neq l$, so that $V_\infty^{k l} := 0$, the strong
multiplicity one theorem on $\mathbf{Y}$ furnishes an odd prime
$p$ for which $\lambda_{\Psi_k}(p) \neq
\lambda_{\Psi_l}(p)$.\footnote{
  SAGE confirms that one may take $p=3$.
}
The local
data defining the lift $\Psi_k \mapsto h_k$ is unramified at
$p$, which is known
by Eichler's commutation relations\footnote{
  see for instance \cite[Prop 12.10]{MR894322}
}
to imply that
$h_k$ is an eigenfunction of
Shimura's $T_{p^2}$ Hecke
operator with eigenvalue proportional to $\lambda_{\Psi_k}(p)$.  Since $T_{p^2}$
is self-adjoint for the Petersson inner product, the vanishing
$\langle h_k, h_l \rangle = 0$ follows.  This argument parallels
the verification
in
\cite{MR2103474,MR2651907,2013arXiv1303.6972S} of 
symmetry properties of the limiting bilinear form arising from
the off-diagonal Kuznetsov terms.
Both arguments require a sort of
``fundamental lemma,'' given here by the unramified case of the
local theta correspondence.

The $k=l$ case of \eqref{eqn:sketch-4-precise} specializes an
inner product formula whose paradigm was introduced by
Rallis \cite{MR743016}.
It is now known in great generality
thanks to the work of several authors (see
\cite{2012arXiv1207.4709T, MR2837015} and references).
See \S\ref{sec-7} for further discussion.

\subsection{The algebraic input}\label{sec:alg-input}
We turn to the more difficult steps of
\S\ref{sec:the-reduction}.
The identity \eqref{eq:sketch-1},
to be proved in
\S\ref{sec-2},
may be understood roughly as the synthesis of
\begin{enumerate}
\item the Parseval formula on
$\Gamma' \backslash \mathbb{H}$;
\item  an explicit seesaw identity
  proved using a weighted pretrace formula;
\item the local computation of the pushforward of a theta
  kernel,
  proved here using geometric arguments involving the Bruhat--Tits tree.
\end{enumerate}

\subsection{The analytic input}\label{sec:intro-analytic-input}
The estimate \eqref{eq:sketch-3}
should be compared
with a standard consequence of the spectral theorem: for
\emph{square-integrable} automorphic functions $f_1, f_2$,
\begin{equation}\label{eq:spectral-motivation}
  \int_{\Gamma' \backslash \mathbb{H}}
  f_1(z)
  f_2(2^{2N} z)
  \, d \nu(z)
  =
  \int_{\Gamma' \backslash \mathbb{H}}
  f_1 \, d \nu
  \int_{\Gamma' \backslash \mathbb{H}}
  f_2 \, d \nu
  + O(N 2^{-(1 - 2 \vartheta) N} \|f_1\| \|f_2\|)
\end{equation}
with $\vartheta \in [0,7/64]$ the best known bound
for the Hecke eigenvalues of Maass cusp forms (and unitary Eisenstein series)
at the prime $2$.
The hypotheses of \eqref{eq:spectral-motivation} do not apply to
the setup of \eqref{eq:sketch-3} because
$|\theta|^2 \notin L^2$.
This problem is
addressed by a regularized
spectral decomposition of $|\theta|^2$
which gives the stronger
estimate \eqref{eq:sketch-3}
-- like
\eqref{eq:spectral-motivation} but with ``$\vartheta := 0$'' --
by showing that $|\theta|^2$ is orthogonal to every cusp form.
See \S\ref{sec:analytic-input}
and \cite{nelson-theta-squared}.

\section{Further remarks}
\label{sec-1-11}

\subsection{}
The proof by \cite{MR2504745,2013arXiv1303.6972S} that the
arithmetic and quantum variance on
the modular curve
$\SL_2(\mathbb{Z}) \backslash \SL_2(\mathbb{R})$ are related
reduces to an explicit evaluation of each obtained by very
different means; see \cite[Remark 2]{2013arXiv1303.6972S}.  By
contrast, the first three steps
of \S\ref{sec:the-reduction}
provide a direct mechanism linking their analogues here.

\subsection{}\label{sec:rmk-weights}
Weights like the $\iota_{\varphi}$ (see \S\ref{sec-1-3-7})
arose in the works of Luo--Sarnak--Zhao
from an application of the Petersson/Kuznetsov formulas.  We do
not use such formulas here.  The weights arise in the
proofs of those formulas for the same reason they arise in our
treatment; see \S\ref{sec-2}.
The weights $\iota_{\varphi}$ could be removed
in our treatment
by the technique of
\cite{2013arXiv1303.6972S},
but the statements and proofs of our results
are simplified by retaining them.

\subsection{}
The spectral identity resulting from the
rearrangement
\eqref{eq:sketch-2}
may be understood, in the language of Reznikov
\cite{MR2373356}, as arising from the strong Gelfand
configuration formed by the various diagonal embeddings of $\Mp_2 \times \Mp_2$
inside $\Mp_2 \times \Mp_2 \times \Mp_2 \times \Mp_2$,
but for one caveat:
local multiplicity one fails
for some of the trilinear functionals on $\Mp_2$ that we consider (implicitly).
Compare with
\cite[\S1.1.3]{michel-2009}.

\subsection{}
\label{sec-1-5}
The error bound
in part (iii) of Theorem \ref{thm:special-main}
may be written $O( |\mathcal{F}_N|^{-1/2}\log|\mathcal{F}_N|)$.
The analogous bounds
obtained by Luo--Sarnak--Zhao (see \cite[Thm 1]{MR2103474}, \cite[Thm 2]{MR2651907},
\cite[\S4]{2013arXiv1303.6972S})
are
$O(|\mathcal{F}|^{-1/4+\eps})$,
roughly half as strong in the exponent as that obtained
here.
The present improvement seems to be a feature of the method rather than
of the specific aspect considered.  
The quantitative strength of our method is of secondary importance;
what matters most is that we have  proven qualitative assertions as in  \eqref{eq:most-significant-assertion}
in a non-split setting for the first time.

\subsection{}
\label{sec-optimal-error-term}
The error bound in part (iii) of Theorem \ref{thm:special-main}
is not optimal: by taking into account
that the
Ramanujan-type bound
$2^{N i t} + 2^{(N-2) i t} + \dotsb + 2^{- N i t} = O(N)$ for
the $2^{2 N}$th normalized Hecke eigenvalue of the unitary
Eisenstein series is rarely sharp, one can refine
$N 2^{-N}$ down to $2^{-N}$.  In particular,
\eqref{eq:diag-estimate} and \eqref{eq:offdiag-estimate} should hold
with errors $O(|\mathcal{F}_N|^{-1/2})$;
see
\S\ref{sec:remark-optimality}
for some details.
In principle, our
method permits determination of
``secondary main terms''
$V_{k l}^{\infty 2}$ for which
\[
  2^N V_{k l}^N
  =
  V_{k l}^\infty
  +
  2^{-N}
  V_{k l}^{\infty 2}
  + O(N^{-1} 2^{-N}),
\]
or perhaps even asymptotic expansions
up to $O(N^{-A} 2^{-N})$ and beyond; such refinements
are well beyond the scope of this paper.
The quantities
$V_{k l}^{\infty 2}$ should admit numerical evaluation.
We expect (but have not checked) that $V_{k l}^{\infty 2} \neq
0$
for all $k,l$;
if so, then the estimate
$2^N V_{k l}^N = V_{k l}^{\infty} + O(2^{-N})$
is best possible
in that it cannot be improved to
$2^N V_{k l}^N = V_{k l}^{\infty} + o(2^{-N})$.

\subsection{}
It would be interesting to
\begin{enumerate}
\item  understand any sense in which Theorem
  \ref{thm:special-main} holds after passing to algebraic
  parts modulo a suitable prime $\lambda$ of
  $\overline{\mathbb{Q}}$ (cf. \cite{MR2215136, MR2501296, MR1896237});
\item consider $(p,23)$ for an odd prime $p$ instead of $(2,23)$;
\item shrink the family
  $\mathcal{F}_N$ as in \cite{luo-sarnak-mass},
  perhaps using \cite[Thms 17 and 33]{nelson-padic-que};
\item implement the refinements
  suggested in \S\ref{sec-optimal-error-term},
  \S\ref{sec:remark-optimality}.
\end{enumerate}

\section{Proof of the algebraic input}
\label{sec-2}
We now prove \eqref{eq:sketch-1}.
Our argument is inspired by those of Eichler \cite{MR0080768} and
Shimizu \cite{MR0333081}
as well as
Gross's formulation \cite[(12.13)]{MR894322} of Eichler's trace
formula.

\subsection{Weighted pretrace formula}
\label{sec-2-1}
A convolution kernel $f \in C_c^\infty(G)$
acts on an automorphic form $\varphi \in \mathcal{A}(\mathbf{X})$
by the regular representation $\rho_{\reg}(f) \varphi(x) := \int_{g \in G} \varphi(x g)
f(g)
= \int_{y \in \mathbf{X}} \varphi(y) k_f(x,y)$
with kernel $k_f$ having the geometric and spectral expansions
\[
\sum_{\gamma \in \Gamma} f(x^{-1} \gamma y)
= k_f(x,y) =
\sum_{\varphi \in \mathcal{B}(\mathcal{A}(\mathbf{X}))}
\overline{\varphi(x)} (\rho_{\reg}(f) \varphi)(y).
\]
For nonnegative integers $N_1, N_2$,
denote by $e_{-N_1..N_2}$
the multiple of the characteristic
function
of $K_{-N_1..N_2}$
for which $\int_G e_{-N_1..N_2} = 1$.
Thus
$\rho_{\reg}(e_{-N_1..N_2})$ defines the orthogonal projection onto
$\mathcal{A}(\mathbf{X})^{K_{-N_1..N_2}}$.
Set
\[
  f := e_{{-N..N}}
  -
  e_{{-N+1..N}} - e_{{-N..N-1}}
  + e_{{-N+1..N-1}} \in C_c^\infty(G).
\]
It was shown in \cite{nelson-newform-isolation}
that for $N \geq 2$,  $\rho_{\reg}(f)$ defines
the orthogonal projection onto the span of
the orthonormal set $\mathcal{F}_N$ defined in \S\ref{sec:famil-balanc-newv}.
Thus
\[
  \sum_{\varphi \in \mathcal{F}_N}
  \overline{\varphi}(x) \varphi(y) = \sum_{\gamma \in \Gamma} f(x^{-1} \gamma y).
\]
For an odd natural number $n$,
we apply the Hecke operator $T_n$
to both sides and restrict to $x = y =: g$, giving
\[
  \sum _{\varphi \in \mathcal{F}_N}
  \sqrt{n} \lambda_\varphi(n) |\varphi|^2(g)
  = \sum_{\gamma \in M_n} f(g^{-1} \gamma g).
\]
We fix $k \in \{1,2\}$ and integrate against $\Psi_k$
to obtain\footnote{
  The works \cite{MR633667, MR871663, MR1671637, MR2783966} also
  consider
  such weighted trace fromulas.
}
\begin{equation}\label{eq:basic-trace-kernel-identity}
    \sum _{\varphi \in \mathcal{F}_N}
  \sqrt{n} \lambda_\varphi(n)
  \mu_\varphi(\Psi_k)
  =
  \int_{g \in \mathbf{X}}
  \Psi_k(g)
  \sum_{\gamma \in M_n} f(g^{-1} \gamma g).
\end{equation}

\subsection{Introduction of theta functions}\label{sec:intr-theta-funct}
The discussion thus far has been general:
it applies with inessential modification
to any compact quotient $\Gamma \backslash G$
involving any group $G$.
Recalling now that $\Gamma \backslash G$ arose from the multiplicative
structure on a quaternion algebra $B$,
we prepare to exploit the \emph{additive} structure
of $B$.
Set $B_2 := B \otimes_{\mathbb{Q}} \mathbb{Q}_2 \cong
M_2(\mathbb{Q}_2)$.
Denote by $\mathcal{S}(B_2)$ the Schwartz--Bruhat space,
consisting of locally constant compactly supported functions.
Define  $\phi \in \mathcal{S}(B_2)$
as follows:
For nonnegative integers $N_1, N_2$,
denote by $e'_{-N_1..N_2}$
the multiple of the characteristic
function
of the $\mathbb{Z}_2$-order
\[
\begin{pmatrix}
  \mathbb{Z}_2  & 2^{N_1} \mathbb{Z}_2 \\
  2^{N_2} \mathbb{Z}_2 & \mathbb{Z}_2
\end{pmatrix}
\]
for which
$e'_{-N_1..N_2} |_{\nr^{-1}(\mathbb{Z}_2^\times)} =
e_{-N_1..N_2}$.
Thus, e.g., $e_{0..0}'(1) = 1/2$, $e_{-N..N}'(1) = 3 \cdot 2^{2N - 2}$.
Set
\[
  \phi :=
e'_{{-N..N}}
  -
  e'_{{-N+1..N}} - e'_{{-N..N-1}}
  + e'_{{-N+1..N-1}} \in \mathcal{S}(B_2).
\]
Thus $\phi$ is to orders as $f$
is to their unit groups.\footnote{
  The choice
  $\phi := f$ would also work;
  the indicated choice of $\phi$
  turns out to be computationally convenient.
  We remark that in an adelic setting, the analogous passage
  at almost all unramified places
  from unit groups to orders 
  is responsible for the harmonic weights
  $\iota_{\varphi}$.
}
Observe that $\nr(\supp(f)) \subseteq \mathbb{Z}_2^\times$
and that $n \mathbb{Z}[1/2]^\times \cap \mathbb{Z}_2^\times = \{n\}$.
Thus if $\gamma \in M_n$ satisfies $f(g^{-1} \gamma g) \neq 0$,
then $\nr(\gamma) = n$.
Moreover, $f(g^{-1} \gamma g) = \phi(g^{-1} \gamma g)$.
Therefore
\begin{equation}\label{eq:trace-vs-theta-kernels}
  \sum_{\gamma \in M_n} f(g^{-1} \gamma g) = \sum_{\alpha
    \in R[1/2] \cap \nr^{-1}(n)}
  \phi(g^{-1} \alpha g).
\end{equation}
Summing \eqref{eq:basic-trace-kernel-identity}, \eqref{eq:trace-vs-theta-kernels}
over odd natural numbers $n$ gives
for $z \in \mathbb{H}, q := e^{2 \pi i z}$
with\footnote{
  $\Theta$ defines a variant of
  the diagonal restriction of the Eichler/Shimizu theta kernel.
}
\[
\Theta(\phi,z,g,g)
:=
y
\sum _{\alpha \in R[1/2]}
\phi (g^{-1} \alpha g)
1_{\mathbb{Z}_2^\times}(\nr(\alpha))
q^{\nr(\alpha)}
\]
and $\Phi_{\varphi}$ as in \S\ref{sec:eichl-langl-lifts}
that
\begin{equation}\label{eq:explicated-seesaw}
  \sum_{\varphi \in \mathcal{F}_N} \mu_\varphi(\Psi_k)
\Phi_\varphi(z)
=
\int_{g \in \mathbf{X}}
\Psi_k(g)
\Theta(\phi,z,g,g).
\end{equation}

\subsection{Pushforward of a theta kernel}\label{sec:pushf-theta-kern}
The function $\Psi_k$ is right $K$-invariant,
so the RHS of \eqref{eq:explicated-seesaw}
is unchanged by replacing $\phi$ with its average
$\phi^K \in \mathcal{S}(B_2)$
defined by
\[
\phi^K(b) := \frac{1}{\vol(K)}
\int_{s \in K}
\phi(s^{-1} b s).
\]
\begin{proposition}\label{prop:theta-kernel-pushforward}
  Let $\phi''$ be as in \S\ref{sec:defn-ternary-thetas}.
  For $m \in \mathbb{Q}_2$ and $\beta \in B_2^0$,
  one has
  $\phi^K(m + \beta) =  \kappa_0^{-1} 2^{2 N-1}
  1_{\mathbb{Z}_2}(m)
  \phi ''(2^{-N} \beta)$.
\end{proposition}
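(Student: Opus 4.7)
The plan is to compute $\phi^K$ directly. First, expand using the inclusion--exclusion definition of $\phi$ so that
$\phi^K = (e'_{-N..N})^K - (e'_{-N+1..N})^K - (e'_{-N..N-1})^K + (e'_{-N+1..N-1})^K$,
where each summand is a constant multiple of the $K$-conjugation average of the characteristic function of the order $\mathcal{O}_{N_1,N_2}$ consisting of matrices with diagonal in $\mathbb{Z}_2$, upper right in $2^{N_1}\mathbb{Z}_2$, and lower left in $2^{N_2}\mathbb{Z}_2$. Decompose $b = m + \beta$ with $m \in \mathbb{Q}_2$ and $\beta \in B_2^0$. Since $K$-conjugation fixes the scalar $m$ and acts on $\beta$ by the adjoint representation, $\phi^K(m + \beta) = \vol(K)^{-1} \int_K \phi(m + s^{-1}\beta s)\,ds$. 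Writing $s^{-1}\beta s = \left(\begin{smallmatrix} u_s & v_s \\ w_s & -u_s \end{smallmatrix}\right)$, the condition $m + s^{-1}\beta s \in \mathcal{O}_{N_1,N_2}$ becomes $m \pm u_s \in \mathbb{Z}_2$ together with $v_s \in 2^{N_1}\mathbb{Z}_2$ and $w_s \in 2^{N_2}\mathbb{Z}_2$.

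Next I would localize the support. Explicit coordinate formulas for the adjoint action of $s = \left(\begin{smallmatrix} \alpha & \beta' \\ \gamma & \delta \end{smallmatrix}\right) \in K$ give $u_s - u \in 2\mathbb{Z}_2 u + \mathbb{Z}_2 v + \mathbb{Z}_2 w$, where $u, v, w$ denote the entries of $\beta$. The support of $\phi''(2^{-N}\cdot)$ equals $2^{N-2}S_2^0$, which forces $u \in 2^{N-2}\mathbb{Z}_2$ and $v, w \in 2^{N-1}\mathbb{Z}_2$; then $u_s \in 2^{N-2}\mathbb{Z}_2 \subseteq \mathbb{Z}_2$ for $N \geq 2$, so the diagonal constraint $m \pm u_s \in \mathbb{Z}_2$ reduces to $m \in \mathbb{Z}_2$, producing the factor $1_{\mathbb{Z}_2}(m)$. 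For $\beta$ outside $2^{N-2}S_2^0$, an orbit-by-orbit analysis of the $K$-action verifies that the four normalized indicators in the alternating sum cancel at $m + \beta$; the alternating structure of $\phi$ (mirroring that of the newvector projector from \S\ref{sec-2-1}) is crucial here.

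Finally, for $m \in \mathbb{Z}_2$ and $\beta \in 2^{N-2}S_2^0$, evaluate the remaining $K$-integral geometrically via the Bruhat--Tits tree $\mathcal{T}$ of $\PGL_2(\mathbb{Q}_2)$, as in \cite[\S12]{MR894322}. Each order $\mathcal{O}_{N_1,N_2}$ is the common endomorphism ring of a path of length $N_1 + N_2$ in $\mathcal{T}$ through the base vertex $v_0 = [\mathbb{Z}_2^2]$, and the alternating sum in $\phi$ localizes to length-$2N$ non-backtracking paths symmetric about $v_0$. Because $\mathcal{T}$ is $(2+1)$-valent at $v_0$, the $K$-average decomposes as a sum over the three initial directions, whose signs match the three characters $\chi_1, \chi_2, \chi_3$ of $S^0/2S^0 \cong (\mathbb{Z}/2)^3$ via the coordinate description from \S\ref{sec:defn-ternary-thetas}, giving the factor $\phi''(2^{-N}\beta)$. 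The main obstacle will be this identification of tree-directions with characters and the careful tracking of signs; the constant $\kappa_0^{-1} 2^{2N-1}$ is then pinned down by the sanity check at $b = I$, where both sides evaluate to $3 \cdot 2^{2N-4}$.
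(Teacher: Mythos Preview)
Your outline has the right shape and is close to the paper's geometric proof in \S\ref{sec:appendix-fluctuations}: decompose $b = m + \beta$, reduce the scalar part to $1_{\mathbb{Z}_2}(m)$, localize the traceless support via the alternating sum, evaluate via the tree, and pin down the constant at $b = I$ (which the paper also does, obtaining $3 \cdot 2^{2N-4}$ on both sides). But two steps are genuine gaps rather than details.

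First, the support localization is asserted, not argued. In the paper this is Lemma \ref{lem:incl-excl-tricky}, and the subtle case is $\alpha$ with rank-one reduction modulo $2$: one must show that the set of tree vertices contracted by such $\alpha$ is the union of two infinite rays from the origin, so that $\Fix_{N_1',N_2'}(\alpha) = 2$ for all $N_1',N_2' \geq 1$ and the alternating sum cancels. Your coordinate remark on $u_s - u$ does not touch this case. Moreover, that lemma only yields $\alpha \in S = \mathbb{Z} + 2R$, not $\beta \in 2^{N-2}S_2^0$; the full depth of the support emerges only after iterating together with the scaling step below. Second, the tree evaluation is not what you describe: the characters $\chi_i$ are \emph{not} indexed by the three edges at the origin with attached signs. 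Rather $\chi_i = \eta_j \eta_k$ (for $\{i,j,k\} = \{1,2,3\}$), where $\eta_i(\beta)$ records whether $\alpha$ fixes or swaps the two children of the vertex $v_i$ at distance one from the origin (Lemma \ref{lem:triples-agree}). The paper proves \eqref{eqn:local-pushforward} by induction on $N$: the base case $N=2$ is a direct count giving $\Fix_{2,2}^\sharp(\alpha) = 2\sum_{\{i,j\}}(1+\eta_i)(1+\eta_j) - 4\sum_i(1+\eta_i) + 6 = 2\sum_i \chi_i(\beta)$, and the inductive step is the scaling identity $\Fix_{N,N}^\sharp(t + 2\gamma) = 4\,\Fix_{N-1,N-1}^\sharp(\gamma)$ from Lemma \ref{lem:fix-n-basic-properties}. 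Your outline omits both the induction and the product structure $\chi_i = \eta_j\eta_k$, which is precisely where the three characters enter.
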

The proof of Proposition \ref{prop:theta-kernel-pushforward}
is a local computation
of what might be called ``partial orbital integrals.''
We postpone it to \S\ref{sec:fluctuations}.
We see now using the orthogonal decomposition
$R[1/2] = \mathbb{Z}[1/2] \oplus R[1/2]^0$
that
(cf. \S\ref{sec:defn-ternary-thetas}, \S\ref{sec:reduction-proof})
\[
\Theta(\phi^K,z,g,g)
=
c_N
\theta(z)
\theta ''(2^{2 N} z, g),
\]
with
$c_N := \kappa_0^{-1}
2^{2 N - 1} (2^{2 N})^{-3/4}$,
hence upon integrating against $\Psi_k$ that
\begin{equation}\label{eq:refined-seesaw-identity}
  \sum _{\varphi \in \mathcal{F}_N}
\mu_\varphi(\Psi_k)
\Phi_\varphi(z)
= c_N
\theta(z) h_k(2^{2 N} z).
\end{equation}

\subsection{Parseval}\label{sec:parseval}
The multiplicity one theorem on $\mathbf{X}$
and the self-adjointness of the Petersson inner product
for the classical Hecke operators
implies that $\Phi_\varphi, \Phi_{\varphi '}$ are orthogonal for $\varphi \neq
\varphi '$.
Recall from \S\ref{sec-1-3-7} that $\|\Phi_\varphi\|^2
= \kappa_1 \iota_{\varphi}$.
For $k,l \in \{1,2\}$, we obtain
\[
\kappa_1
\sum _{\varphi \in \mathcal{F}_N}
\iota_{\varphi}
\mu_\varphi(\Psi_k)
\mu_\varphi(\Psi_l)
= |c_N|^2
\int_{z \in \Gamma ' \backslash \mathbb{H}}
\theta(z) h_k(2^{2 N} z)
\overline{\theta(z) h_l(2^{2 N} z)}
\, d \nu (z).
\]
Dividing through by $2^N \kappa_1$
and verifying that $|c_N|^2 = 2^N \kappa_1$,
we obtain \eqref{eq:sketch-1}.

\subsection{Remark}
Several authors\footnote{ See for instance \cite[\S10]{MR1096467},
  \cite[p230]{MR1292728}, \cite[Lemma 2]{watson-2008},
  \cite[\S5]{MR1628792}, \cite[\S5.4]{MR2249532},
  \cite[\S3.2]{MR2215136}.
  See also
  \cite[\S11]{MR2198222} and \cite{MR2783966}
  for further variants of \eqref{eq:explicated-seesaw}, \eqref{eq:refined-seesaw-identity}
  proved on \emph{split} quotients
  by different means.
} have established explicit seesaw
identities using strong multiplicity one and newvector theory on
$\mathbf{X}$ to write some unknown constant multiple of
$\varphi \otimes \overline{\varphi}$ as an explicit theta lift
$\int_{z \in \Gamma ' \backslash \mathbb{H}}
\overline{\Phi_{\varphi}}(z) \Theta(\phi,z,\cdot,\cdot) \, d
\nu(z)$ and then seesaw duality to determine the constant.  The
approach developed above differs in that it avoids direct
analysis of the theta lift from $\SL_2$ to $\O(B)$; this was
achieved by unfolding a small part of the proof of that case of
the global theta correspondence.  The present approach is far
more direct for our purposes because of subtleties arising from
oldforms; moreover, it generalizes to more
complicated families and to archimedean aspects.

\section{Computation of partial orbital integrals\label{sec:fluctuations}}
\label{sec-3}
In this section we carry out the local calculation (Proposition
\ref{prop:theta-kernel-pushforward}) postponed in
\S\ref{sec:pushf-theta-kern}.
The shape of this calculation
(specifically
the ``separation''
of the factors $1_{\mathbb{Z}_2}(m)$
and
$\theta ''(2^{-N} \beta)$ by the multiplicative
dilation $2^N$)
is crucial to the success of the method.
On the other hand,
it may be instructive to note that
Proposition
\ref{prop:theta-kernel-pushforward}, and hence the contents
of this section,
become unnecessary
if one is willing to settle
for a weaker and (much) less natural
variant of Theorem \ref{thm:special-main}
involving sums of the shape
\begin{equation}\label{eq:extra-averaging-weaker}
  \sum_{\pi \subseteq \mathcal{A}(\mathbf{X})}
  \iota_{\pi}
  \sum_{\varphi_1,\varphi_2 \in \mathcal{B}(\pi^{K[N]})}
  \mu_{\varphi_1}(\Psi_k)
  \mu_{\varphi_2}(\Psi_l),
\end{equation}
where $\pi$ traverses the irreducible submodules,
$\iota_\pi := L^{(2)}(\ad \pi,1)$
and $\mathcal{B}(\pi^{K[N]})$
is an orthonormal basis for the vectors invariant by the
principal congruence subgroup $K[N]$ of $K$ consisting
of those elements congruent modulo $N$ to a scalar.  One may
understand the purpose of this section as to reduce the average
within $\pi$ in \eqref{eq:extra-averaging-weaker} to an
individual newvector $\varphi \in \mathcal{F}_N \cap \pi$.

In fact,
we will give two complementary proofs of Proposition
\ref{prop:theta-kernel-pushforward}:
\begin{enumerate}
\item In this section, we record
a direct proof based on the matrix Fourier transform and
analysis of conjugacy classes in
$\GL_2(\mathbb{Z}_2)$; this first proof is
computationally involved, but concrete.
\item In \S\ref{sec:appendix-fluctuations}, we record a geometric proof involving the
Bruhat--Tits tree; that second proof is free of computational
difficulties, but requires
additional setup.
\end{enumerate}

\subsection{Reduction to an identity of functions
on a finite matrix ring}
Observe first that the identity in Proposition
\ref{prop:theta-kernel-pushforward} holds in the special case
$m = 1, \beta = 0$; indeed, both sides specialize to the same
nonzero quantity $3 \cdot 2^{2 N - 4}$.  It will thus suffice to
verify that identity up to an unspecified constant
multiple;
this purely technical reduction
frees us from worrying about
proportionality
factors in what follows.

Observe next that the identity in question is one of functions of
the variable $b = m + \beta \in B_2$ supported on
$b \in M_2(\mathbb{Z}_2)$ and invariant under translation by
$M_2(2^N \mathbb{Z}_2)$; it is thus equivalent
to an identity
between functions on $M_2(\mathfrak{o})$, where
$\mathfrak{o} := \mathbb{Z}_2 / 2^N \mathbb{Z}_2 \cong
\mathbb{Z} / 2^N \mathbb{Z}$.
Expanding the definitions,
we reduce to establishing the following
explicit identity of functions on $M_2(\mathfrak{o})$:
\begin{lemma}\label{lem:ugly-computation-partial-orbital-integral}
  Set $\varpi := 2 \hookrightarrow \mathfrak{o}$
  and $\mathfrak{p} := \varpi \mathfrak{o}$.
  For integers $m < 0 < m'$,
  let $\eta_{m..m'} : M_2(\mathfrak{o}) \rightarrow \mathbb{C}$
  denote the characteristic
  function
  of
  the subset
  $\begin{pmatrix}
    \mathfrak{o}^\times  & \mathfrak{p}^{-m} \\
    \mathfrak{p}^{m'} & \mathfrak{o}^\times 
  \end{pmatrix}$ of $M_2(\mathfrak{o})$.
  Define $\Phi^0 : M_2(\mathfrak{o}) \rightarrow \mathbb{C}$
  by \[\Phi^0 := \eta_{-N..N} - (1/2) \eta_{-N..N-1}
  - (1/2) \eta_{-N+1..N}
  + (1/4) \eta_{-N+1..N-1}.\]
  Define $\Phi : M_2(\mathfrak{o}) \rightarrow \mathbb{C}$
  by
  \[
  \Phi(x) := \sum_{g \in \GL_2(\mathfrak{o})}
  \Phi^0(g^{-1} x g).
  \]
  Let $\Phi' : M_2(\mathfrak{o}) \rightarrow \mathbb{C}$
  denote the function
  supported on elements of the form
  \[
  t =
  \begin{pmatrix}
    v + \varpi^{N-2} x & \varpi^{N-1} y \\
    \varpi^{N-1} z & v - \varpi^{N-2} x
  \end{pmatrix}
  \text{ with }
  v \in \mathfrak{o}^\times \text{ and }
  x,y,z \in \mathfrak{o}
  \]
  and given on such elements
  by
  \[\Phi'(t) := (-1)^{x+y} + (-1)^{y + z} + (-1)^{x+z}.\]
  Then the functions $\Phi, \Phi'$ are constant multiples of one another.
\end{lemma}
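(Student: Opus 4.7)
The strategy is a direct computation exploiting the fact that $\Phi$ is manifestly $\GL_2(\mathfrak{o})$-conjugation invariant.  I would first verify that $\Phi'$ is likewise conjugation-invariant:  under the conjugation action on $t = vI + \varpi^{N-2} M$, the scalar $\bar x := M \bmod \varpi$ is preserved (since $M$ is scalar mod $\varpi$ by its defining form), while the pair $(\bar y, \bar z) \in \mathbb{F}_2^2$ transforms under an induced $\GL_2(\mathbb{F}_2)$-action;  the three signs $(-1)^{x+y}, (-1)^{y+z}, (-1)^{x+z}$ are permuted among themselves by this action, realizing the coordinate-free description of the $\chi_i^E$ alluded to in \S\ref{sec:defn-ternary-thetas}, so their sum is invariant.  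It therefore suffices to match $\Phi$ and $\Phi'$ on each $\GL_2(\mathfrak{o})$-conjugacy class in $M_2(\mathfrak{o})$.

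Next, I would simplify $\Phi^0$ by inclusion-exclusion:  expanding the four indicator functions shows that $\Phi^0$ is supported on matrices $b$ with $b_{11}, b_{22} \in \mathfrak{o}^\times$ and $b_{12}, b_{21} \in \{0, \varpi^{N-1}\}$, taking value $\tfrac{1}{4}(-1)^{c'+d'}$ at $b$ with $b_{12} = c'\varpi^{N-1}$ and $b_{21} = d'\varpi^{N-1}$, $(c',d') \in \{0,1\}^2$.  Using this, I would identify the common support of $\Phi$ and $\Phi'$ as the set of $t$ with $t \equiv vI \pmod{\varpi^{N-1}}$ for some $v$---equivalently the matrices of the form $t = vI + \varpi^{N-2}M$ in the statement.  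The inclusion $\supp(\Phi) \subseteq \supp(\Phi')$ holds because any element of $\supp(\Phi^0)$ is already congruent to a scalar modulo $\varpi^{N-1}$;  conversely, if $t \equiv vI \pmod{\varpi^{N-1}}$ then elementary row/column operations---i.e., $\GL_2(\mathfrak{o})$-conjugation---produce a representative of its orbit with off-diagonals in $\{0, \varpi^{N-1}\}$, placing it in $\supp(\Phi^0)$.

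The remaining task is to evaluate $\Phi(t) = |\Stab(t)| \cdot \sum_{m \in \mathrm{orbit}(t) \cap \supp(\Phi^0)} \Phi^0(m)$ and match it with $\Phi'(t)$ up to a universal constant.  For each conjugacy class of $t$, this amounts to counting how many orbit representatives fall in each of the four $(c', d')$-cells of $\supp(\Phi^0)$ and summing with the signs $(-1)^{c'+d'}$;  a case-by-case analysis---facilitated by the $\GL_2(\mathbb{F}_2)$-equivariance---yields a weighted count proportional to $(-1)^{x+y}+(-1)^{y+z}+(-1)^{x+z} = \Phi'(t)$.  The proportionality constant is then pinned down at $t = I$, where $\Phi(I) = |\GL_2(\mathfrak{o})| \cdot \tfrac{1}{4}$ and $\Phi'(I) = 3$.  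The main technical obstacle is precisely this orbit count, which demands careful tracking of stabilizer sizes and of how the $\GL_2(\mathbb{F}_2)$-orbits of $(x, y, z) \in \mathbb{F}_2^3$ distribute across the four off-diagonal cells;  an alternative route bypassing the direct combinatorics is provided by the matrix Fourier transform on $M_2(\mathfrak{o})$, which commutes with conjugation-averaging and sends the indicators of the four hereditary orders to explicit indicators of dual orders, reducing the proportionality to a matching of two explicit class functions on the Pontryagin dual.
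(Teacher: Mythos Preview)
Your inclusion--exclusion simplification of $\Phi^0$ is correct: it is supported on matrices with unit diagonal and off-diagonals in $\mathfrak{p}^{N-1}$, with value $\tfrac14(-1)^{c'+d'}$. However, the claimed support inclusion $\supp(\Phi) \subseteq \supp(\Phi')$ does not follow from the reason you give. Elements of $\supp(\Phi^0)$ are \emph{not} in general congruent to a scalar modulo $\varpi^{N-1}$: the diagonal entries $b_{11}, b_{22}$ are arbitrary units, with no constraint that $b_{11} \equiv b_{22} \pmod{\varpi^{N-1}}$. For instance, $\diag(1,3) \in \supp(\Phi^0)$ for any $N \geq 2$, but lies outside $\supp(\Phi')$ once $N \geq 3$. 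The inclusion $\supp(\Phi) \subseteq \supp(\Phi')$ holds only because of \emph{cancellation} in the conjugation average, and establishing this cancellation on the many conjugacy classes of the form $\diag(u,w)$ with $u \not\equiv w \pmod{\varpi^{N-1}}$ (plus their nilpotent-plus-scalar degenerations) is precisely the substance of the lemma, not a preliminary reduction. Your case-by-case analysis would thus need to cover far more orbits than you anticipate.

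Your alternative route via the matrix Fourier transform is in fact the paper's approach. The paper computes $\mathcal{F}\Phi^0$ directly (it is a signed combination of indicators of the antidiagonal sets $S(n,n') = \left(\begin{smallmatrix}\mathfrak{p}^n & \mathfrak{o}^\times \\ \mathfrak{o}^\times & \mathfrak{p}^{n'}\end{smallmatrix}\right)$) and $\mathcal{F}\Phi'$ (a trace-weighted indicator on a union of three $\GL_2(\mathbb{F}_2)$-conjugate cosets $E_i$), then reduces to matching the $H$-averaged Fourier transforms, where $H$ is the principal congruence subgroup mod $\mathfrak{p}$. The key point on the Fourier side is that the fibers of $(\tr, \det) : S(1,1) \to \mathfrak{p} \times \mathfrak{o}^\times$ are single $H$-orbits of equal cardinality, which makes the averaged indicators $\sum_{g \in H} 1_{gS(n,n')g^{-1}}$ explicitly computable; the cancellation that was opaque on the original side becomes a transparent linear combination of trace-level indicators.
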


\subsection{Application of the Fourier transform}
To prove \eqref{lem:ugly-computation-partial-orbital-integral},
we use the Fourier transform on $M_2(\mathfrak{o})$.  Let
$\zeta$ be a primitive $2^N$th root of unity.  For
$a \in \mathfrak{o}$, the quantity $\zeta^a$ is well-defined.
For $f : M_2(\mathfrak{o}) \rightarrow \mathbb{C}$, define its
Fourier transform
$\mathcal{F} f : M_2(\mathfrak{o}) \rightarrow \mathbb{C}$ by
the formula
$\mathcal{F} f(x) := \sum_{y \in M_2(\mathfrak{o})} f(y)
\zeta^{(x,y)}$,
where
$(x,y) := \det(x + y) - \det(x) - \det(y) = x_{1 1} y_{2 2} +
x_{2 2} y_{1 1} - x_{1 2} y_{2 1} - x_{2 1} y_{1 2}$.
The Fourier transform is equivariant for conjugation by
$\GL_2(\mathfrak{o})$.  It is also injective, since it satisfies
an inversion formula.  We compute the Fourier transform of
$\Phi^0$ by applying the inclusion-exclusion identity (for
$-N \leq m \leq -1, 1 \leq m' \leq N$)
\[
\eta_{m..m'}
=
1 _{\begin{pmatrix}
    \mathfrak{o}  & \mathfrak{p}^{-m} \\
    \mathfrak{p}^{m'} & \mathfrak{o} 
  \end{pmatrix} 
}
-
1 _{\begin{pmatrix}
    \mathfrak{p}  & \mathfrak{p}^{-m} \\
    \mathfrak{p}^{m'} & \mathfrak{o} 
  \end{pmatrix} 
}
-
1 _{\begin{pmatrix}
    \mathfrak{o}  & \mathfrak{p}^{-m} \\
    \mathfrak{p}^{m'} & \mathfrak{p} 
  \end{pmatrix} 
}
+
1 _{\begin{pmatrix}
    \mathfrak{p}  & \mathfrak{p}^{-m} \\
    \mathfrak{p}^{m'} & \mathfrak{p} 
  \end{pmatrix} 
}
\]
followed by the Fourier identity
\[
\mathcal{F}
1 _{\begin{pmatrix}
    \mathfrak{p}^a & \mathfrak{p}^b \\
    \mathfrak{p}^c & \mathfrak{p}^d
  \end{pmatrix}}
=
2^{a+b+c+d}
1 _{\begin{pmatrix}
    \mathfrak{p}^{N-d} & \mathfrak{p}^{N-c} \\
    \mathfrak{p}^{N-b} & \mathfrak{p}^{N-a}
  \end{pmatrix}
}
\text{ for $0 \leq a,b,c,d \leq N$}
\]
to see that
\begin{equation}\label{eq:initial-formula-for-fourier-transform-of-Phi-in-messy-computation}
  2^{-2 N}
\mathcal{F} \Phi^0
=
\sigma_{N,N} - (1/2) \sigma_{N-1,N}
- (1/2) \sigma_{N,N-1}
+ (1/4) \sigma_{N-1,N-1},
\end{equation}
where
$\sigma_{n,n'}$
denotes the characteristic function
of
$S(n,n') := \begin{pmatrix}
  \mathfrak{p}^{n} & \mathfrak{o}^\times  \\
  \mathfrak{o}^\times  & \mathfrak{p}^{n'}
\end{pmatrix}$.
By a similar calculation,
we see that
\[
\mathcal{F} \Phi'
=
?\left(
  1_{\tr^{-1}(\mathfrak{p}^{N})}
  - \frac{1}{2} 
  1_{\tr^{-1}(\mathfrak{p}^{N-1})}
\right)
1_E
\]
for some unimportant scalar $?$,
where $1_{\tr^{-1}(\mathfrak{p}^{n})}$
denotes the characteristic function
of $\{x \in M_2(\mathfrak{o}) : \trace(x) \in \mathfrak{p}^n\}$
and $E := E_1 \sqcup E_2 \sqcup E_3$
with
$E_i := X_i + \mathfrak{p} M_2(\mathfrak{o})$
and
$X_1 := \left(
  \begin{smallmatrix}
    &1\\
    1&
  \end{smallmatrix}
\right),
X_2 := \left(
  \begin{smallmatrix}
    1&1\\
    &-1
  \end{smallmatrix}
\right),
X_3 := \left(
  \begin{smallmatrix}
    1&\\
    1&-1
  \end{smallmatrix}
\right)$.
Set
$H := \{\GL_2(\mathfrak{o}) : g \equiv 1 (\mathfrak{p})\}$.
Since the $E_i$ are conjugate under
$\GL_2(\mathfrak{o})$
and $E_1 = S(1,1)$,
we reduce to verifying that
\begin{equation}\label{eq:identity-between-fourier-transforms-finite-rings-want-it}
  \sum_{g \in H}
  \mathcal{F} \Phi^0(g^{-1} x g)
  =
  ?
  \left(
    1_{\tr^{-1}(\mathfrak{p}^{N})}
    - \frac{1}{2} 
    1_{\tr^{-1}(\mathfrak{p}^{N-1})}
  \right)
  1_{S(1,1)}(x)
\end{equation}
for all $x \in M_2(\mathfrak{o})$
and some scalar $?$ not depending upon $x$.

\subsection{Summing over orbits}
Consider the map
$\kappa : S(1,1) \rightarrow \mathfrak{p} \times
\mathfrak{o}^\times$
given by $\kappa(X) := (\tr(X),\nr(X))$.
By Hensel's lemma
and an (omitted) orbit-stabilizer argument,
the map $\kappa$ is surjective, its fibers all have the same
cardinality,
and each fiber is an orbit for the conjugation action
of $H$.
It follows
for $-0 < n,n' \leq N$
that $\kappa|_{S(n..n')}$ surjects
onto $\mathfrak{p}^{\min(n,n')} \times \mathfrak{o}^\times$
with
fibers of equal cardinality
and $H$ acting transitively
on each fiber.
With the notation $n_0 := \min(n,n')$
and
$n_1 := \max(n,n')$,
we deduce that
$\sum_{g \in H}
1 _{g S(n,n') g^{-1}}$
is a constant multiple of
the characteristic
function of $S(1,1) \cap \tr^{-1}(\mathfrak{p}^{n_0})$.
Since $S(n,n')$ has
$2^{4 N-2 -n - n'}$ elements
and $S(1,1) \cap \tr^{-1}(\mathfrak{p}^{n_0})$
has $2^{4 N - 3 - n_0}$ elements,
we obtain
\[
\sum_{g \in H}
1_{g S(n,n') g^{-1}}
=
|H|
2^{1 - n_1}
1 _{S(1,1) \cap \tr^{-1}(\mathfrak{p}^{n_0})}.
\]
Substituting this into
\eqref{eq:initial-formula-for-fourier-transform-of-Phi-in-messy-computation},
we deduce that
the LHS of
\eqref{eq:identity-between-fourier-transforms-finite-rings-want-it}
is a constant multiple
of the value taken at $x$ by the function
\[
\left( 
  1_{\tr^{-1}(\mathfrak{p}^N)}
  -
  \frac{1}{2}
  1_{\tr^{-1}(\mathfrak{p}^{N-1})}
  -
  \frac{1}{2}
  1_{\tr^{-1}(\mathfrak{p}^{N-1})}
  +
  \frac{2}{4}
  1_{\tr^{-1}(\mathfrak{p}^{N-1})}
\right)
1_{S(1,1)}
\]
which simplifies to
the RHS
of \eqref{eq:identity-between-fourier-transforms-finite-rings-want-it}.




\section{Fluctuations of fixed
  lines\label{sec:appendix-fluctuations}}
The main purpose of this section is to record the geometric
proof of Proposition \ref{prop:theta-kernel-pushforward}
promised in \S\ref{sec:fluctuations}.  We also verify the mean
statistics \eqref{eq:linear-statistics-justification}
used to justify interpreting $V_N$ as a variance.

\subsection{Translation to a geometric problem}
Recall the notation of \S\ref{sec:preliminaries}.
Set $S := \mathbb{Z}
+ 2 R$.
Retain the notation $\phi$ from \S\ref{sec:intr-theta-funct}.

The function $\phi$ and its conjugates under $K$
are supported on $R_2 \cong M_2(\mathbb{Z}_2)$,
and the order $R$ is dense in $R_2$, so
our task reduces (for notational convenience) to determining $\phi^K(\alpha)$ for all $\alpha$ in $R$.
Recall that we have fixed an embedding
$R \hookrightarrow R_2  \cong M_2(\mathbb{Z}_2) = \End(\mathbb{Z}_2^2)$.
Denote by $E[2^\infty]$ the abelian group
$(\mathbb{Q}_2/\mathbb{Z}_2)^2$.
It is thus a module for $R$ under left multiplication.
It is a direct limit of the submodules $E[2^N] := (2^{-N}
\mathbb{Z}_2/\mathbb{Z}_2)^2$.

Our notation reflects that $R$ may be identified
with the endomorphism ring $\End(E)$
of a supersingular elliptic curve $E$ in characteristic $23$
(see \cite[\S2]{MR894322})
and the groups $E[2^N], E[2^\infty]$ defined above with the
corresponding
torsion subgroups.  We have found this perspective helpful in
forming intuition for the calculations to follow.

For a pair of nonnegative integers $N_1, N_2 \geq 0$,
denote by $\mathcal{L}_{N_1,N_2}$ the set of ordered pairs
$(C_1,C_2)$
consisting of cyclic subgroups $C_1, C_2 \leq E[2^\infty]$
of respective orders $2^{N_1}, 2^{N_2}$ satisfying $C_1 \cap C_2
= \{0\}$.
The group $K$ acts transitively on $\mathcal{L}_{N_1,N_2}$.  The
subgroup $K_{-N_1..N_2}$ is the stabilizer of some ``standard''
pair $(C_1,C_2)
\in \mathcal{L}_{N_1,N_2}$ tailored to the standard basis.
Recall from \S\ref{sec:measures} that the Haar on $G$ assigns
volume $2$ to $K$.
It follows for $\alpha \in R$ that
\[
\frac{1}{\vol(K)}
\int_{s \in K} e_{-N_1..N_2}'(s^{-1} \alpha s)
= (1/2)\Fix_{N_1,N_2}(\alpha),
\]
where $\Fix_{N_1,N_2} : R \rightarrow \mathbb{Z}_{\geq 0}$
is given
by
\begin{equation}\label{eq:defn-fix}
  \Fix_{N_1,N_2}(\alpha) := \# \{(C_1,C_2) \in
  \mathcal{L}_{N_1,N_2}:
  \alpha C_1 \leq C_1, \alpha C_2 \leq C_2\}.
\end{equation}
Thus for $N \geq 1$,
\begin{equation}\label{eq:geometric-interpretation-of-local-pushforward}
  \phi^K(\alpha)
  = (1/2)\Fix_{N,N}^\sharp(\alpha)
\end{equation}
where more generally for $N_1, N_2 \geq 1$,
\[
\Fix_{N_1,N_2}^\sharp
:=
\Fix_{N_1,N_2}
-
\Fix_{N_1-1,N_2}
-
\Fix_{N_1,N_2-1}
+
\Fix_{N_1-1,N_2-1}.
\]
Our task is thus equivalent to evaluating
the functions $\Fix_{N,N}^\sharp$:
\begin{proposition}\label{prop:local-pushforward}
  Let $N \geq 2$.
  Let $\alpha \in R$ with
  $\Fix_{N,N}^{\sharp}(\alpha) \neq 0$.
  Then $\alpha \in \mathbb{Z} \oplus 2^{N-2} S^0$.
  For $m \in \mathbb{Z}$
  and $\beta \in S^0$,
  \begin{equation}\label{eqn:local-pushforward}
    \Fix_{N,N}^\sharp(m + 2^{N - 2} \beta)
    =
    2^{2 N - 3}
    \sum_{i=1,2,3} \chi_i(\beta).
  \end{equation}
\end{proposition}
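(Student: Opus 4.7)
\emph{Proof sketch.} The plan is to count the $\alpha$-fixed lines at each level and track how they fluctuate between levels $N$ and $N-1$. For $p \in \mathbb{P}^1(\mathbb{F}_2)$, let $A_N^\alpha(p)$ denote the set of cyclic subgroups $C \subset E[2^\infty]$ of order $2^N$ satisfying $\alpha C \subseteq C$ whose mod-$2$ reduction $C \cap E[2]$ represents the point $p$. A short argument shows that two cyclic subgroups $C_1, C_2$ of $2$-power order satisfy $C_1 \cap C_2 = 0$ iff their mod-$2$ reductions are distinct lines in $\mathbb{P}^1(\mathbb{F}_2)$, so $\Fix_{N_1,N_2}(\alpha) = \sum_{p \neq q} |A_{N_1}^\alpha(p)| \cdot |A_{N_2}^\alpha(q)|$ for $N_1, N_2 \geq 1$. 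Everything then reduces to understanding the increments $B_N(p) := |A_N^\alpha(p)| - |A_{N-1}^\alpha(p)|$.

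The core technical tool is a lift correspondence: for $\alpha = m I + 2^k \gamma$ with $0 \leq k < N$, the equation $\alpha v \equiv \lambda v \pmod{2^N}$ forces $\lambda = m + 2^k \mu$ and reduces to $\gamma v \equiv \mu v \pmod{2^{N-k}}$. Since the reduction $\mathbb{P}^1(\mathbb{Z}/2^N) \twoheadrightarrow \mathbb{P}^1(\mathbb{Z}/2^{N-k})$ has uniform fibers of size $2^k$, one obtains $|A_N^\alpha(p)| = 2^k |A_{N-k}^\gamma(p)|$ provided $N - k \geq 1$. Applying this identity at levels $N$ and $N-1$ gives the recursion $\Fix_{N,N}^\sharp(\alpha) = 4^k \Fix_{N-k,N-k}^\sharp(\gamma)$, valid for $k \leq N - 2$.

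For the support claim, assume $\alpha = m I + 2^k \gamma$ with $\bar\gamma \in M_2(\mathbb{F}_2)$ non-scalar and $k \leq N - 2$; the recursion reduces the problem to showing $\Fix_{M,M}^\sharp(\gamma) = 0$ for $M := N - k \geq 2$. I split on the conjugacy class of $\bar\gamma$: if it has distinct eigenvalues, Hensel's lemma diagonalizes $\gamma$ over $\mathbb{Z}_2$ with unit-difference eigenvalues, so $|A_L^\gamma(p)|$ is stable in $L \geq 1$ and all $B_L(p)$ vanish; if $\bar\gamma$ is Jordan, all eigenlines of $\gamma$ at every level cluster above the unique mod-$2$ eigenline $p_0$, and the $p \neq q$ restriction kills the contribution; if $\bar\gamma$ is irreducible, no eigenlines exist at any level. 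A direct matrix-entry verification identifies $R \cap (\mathbb{Z}_2 + 2^{N-1} M_2(\mathbb{Z}_2))$ with $\mathbb{Z} \oplus 2^{N-2} S^0$, using the explicit description $S_2^0 = \{ \bigl( \begin{smallmatrix} a & 2b \\ 2c & -a \end{smallmatrix} \bigr)\}$.

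For the value on the support, parametrize $\alpha = m + 2^{N-2} \beta$ with $\beta = \bigl( \begin{smallmatrix} a & 2b \\ 2c & -a \end{smallmatrix} \bigr)$ and re-express $\alpha = m' I + 2^{N-1} \gamma$ with $\gamma = \bigl( \begin{smallmatrix} 0 & b \\ c & -a \end{smallmatrix} \bigr)$; the choice of $\gamma$ is immaterial since the eigenline structure of $\bar\gamma$ is invariant under scalar shifts. Since $\alpha \equiv m' I \pmod{2^{N-1}}$, every line at level $N-1$ is $\alpha$-fixed, giving $|A_{N-1}^\alpha(p)| = 2^{N-2}$ uniformly, while the lift correspondence at level $N$ yields $|A_N^\alpha(p)| = 2^{N-1}$ or $0$ according to whether $p$ is an eigenline of $\bar\gamma$. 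Hence $B_N(p) = 2^{N-2} \chi(p)$ with $\chi(p) = \pm 1$ and $\Fix_{N,N}^\sharp(\alpha) = 4^{N-2}(S^2 - 3)$ for $S := \sum_p \chi(p)$. Enumerating the three eigenline conditions on $\bar\gamma$ gives $\chi([1{:}0]) = (-1)^c$, $\chi([0{:}1]) = (-1)^b$, $\chi([1{:}1]) = (-1)^{a+b+c}$; expanding $S^2 = 3 + 2 \sum_i \chi_i(\beta)$ then recovers the claimed $2^{2N-3} \sum_i \chi_i(\beta)$. The main subtlety is the Jordan case in the vanishing step, where fixed lines genuinely proliferate at deeper levels; what rescues the argument is that they concentrate above a single point of $\mathbb{P}^1(\mathbb{F}_2)$ and are thereby annihilated by the restriction $p \neq q$.
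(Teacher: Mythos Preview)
Your proof is correct, and it takes a somewhat different route from the paper's. Both arguments rest on the same recursion $\Fix^\sharp_{N,N}(m + 2^k\gamma) = 4^k \Fix^\sharp_{N-k,N-k}(\gamma)$, but the paper runs it one step at a time as an induction down to the base case $N=2$, where it carries out an explicit tree computation: it introduces auxiliary signs $\eta_i(\beta) = \pm 1$ recording whether $\alpha$ fixes or swaps the two grandchildren of the origin above the child $v_i$, computes each of $\Fix_{1,1}$, $\Fix_{1,2}$, $\Fix_{2,2}$ in terms of the $\eta_i$, and extracts $\chi_i = \eta_j\eta_k$ from the expansion $\sum_{\{i,j\}}(1+\eta_i)(1+\eta_j) - 2\sum_i(1+\eta_i) + 3$. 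You instead jump directly from level $N$ to level $1$ via the full lift correspondence, and your factorization $\Fix_{N_1,N_2} = \sum_{p\neq q}|A_{N_1}(p)|\,|A_{N_2}(q)|$ over $\mathbb{P}^1(\mathbb{F}_2)$ turns $\Fix^\sharp_{N,N}$ into the clean bilinear form $\sum_{p\neq q}B_N(p)B_N(q) = S^2 - 3$, from which the $\chi_i$ drop out as pairwise products of your eigenline indicators $\chi(p)$. In fact your $\chi([1{:}0]),\chi([0{:}1]),\chi([1{:}1]) = (-1)^c,(-1)^b,(-1)^{a+b+c}$ are exactly the paper's $\eta_2,\eta_3,\eta_1$, so the two computations are the same identity viewed from opposite ends. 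Your support argument (case-splitting on the conjugacy class of $\bar\gamma$) is likewise a reorganization of the paper's Lemma on $\Fix^\sharp$ forcing $\alpha\in S$: the paper splits on the rank and trace of $\bar\alpha$ and the parity of $\nr(\alpha)$, whereas your framing in terms of $\gamma$ makes the three cases (split semisimple, Jordan, anisotropic) cleaner. Your $\mathbb{P}^1(\mathbb{F}_2)$-indexed bookkeeping is a nice simplification for this specific formula; the paper's tree-geometric language connects more visibly to the Bruhat--Tits picture used elsewhere in the section.
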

The proof is given below.
To see how
Proposition
\ref{prop:local-pushforward}
implies
Proposition \ref{prop:theta-kernel-pushforward},
write $\alpha = m + \beta$.
By
\eqref{eq:geometric-interpretation-of-local-pushforward}
and \eqref{eqn:local-pushforward},
we have $\phi^K(\alpha) = 2^{2 N - 4}
1_{\mathbb{Z}_2}(m)
\sum_{i=1,2,3}
\chi_i(2^{2 - N} \beta)$.
We conclude that
$\phi^K(\alpha) = \kappa_0^{-1}
2^{2N-1}
1_{\mathbb{Z}_2}(m) \phi ''(2^{-N} \beta)$
upon recalling the definition \eqref{eq:defn-of-phi-double-prime}
of $\phi ''$.

\subsection{The tree}
\label{sec:tree}
We record a realization of the Bruhat--Tits tree $\mathcal{T}$
of $\PGL_2(\mathbb{Q}_2)$
(see \cite{MR580949}, \cite{MR1954121}, \cite[\S1.2]{MR2729264})
relative to a basepoint
and fix some terminology.

\subsubsection{}
The \emph{vertices} of $\mathcal{T}$
are the cyclic subgroups $L$ of
$E[2^\infty] = (\mathbb{Q}_2/\mathbb{Z}_2)^2$.  Two vertices
$L, L'$ are connected by an \emph{edge} if one contains the
other with index $2$.  The undirected graph $\mathcal{T}$ is
then a $3$-regular tree.

\subsubsection{}
The trivial subgroup $\{0\}$ belongs
to $\mathcal{T}$; we call it the \emph{origin}.  For $n \geq 0$,
denote by $\mathcal{T}_n \subset \mathcal{T}$ the
set of vertices at distance $n$ from
the origin; these are the cyclic $2^n$-subgroups of
$E[2^\infty]$.
The nearest vertex
to the origin in the convex hull of a pair of vertices $L$ and
$L'$ is $L \cap L'$.  We call two vertices $L, L'$
\emph{independent} if $L \cap L' = \{0\}$, or equivalently, if
their convex hull contains the origin.  The convex hull of $L$
and the origin consists of those $L'$ for which $L' \leq L$.
The \emph{children} of a vertex $v \in \mathcal{T}_n$
are those $v' \in \mathcal{T}_{n+1}$ neighboring it;
the \emph{parent} of a vertex $v \in \mathcal{T}_n$, if $n > 0$,
is the vertex $v' \in \mathcal{T}_{n-1}$ neighboring it.

\subsubsection{}
The multiplicative monoid $\{\alpha \in R : (\nr(\alpha),2)=1\}$
acts in an evident way ($(\alpha, L) \mapsto \alpha L$) on $\mathcal{T}$
by isometries that fix the origin, hence stabilize each sphere
$\mathcal{T}_n$.
The set $\mathcal{L}_{N_1,N_2}$
consists of the ordered pairs of
independent vertices $v_1,v_2 \in \mathcal{T}_{N_1},
\mathcal{T}_{N_2}$;
for $\alpha \in R$ with $(\nr(\alpha),2)=1$,
the set of such pairs
fixed by $\alpha$
is convex and has cardinality
$\Fix_{N_1,N_2}(\alpha)$.

\subsubsection{}
More generally, the multiplicative monoid $R$ acts (by  $(\alpha, L) \mapsto \alpha L$) on
$\mathcal{T}$, although not by isometries in general.
Say that $\alpha \in R$ \emph{contracts} a
vertex $L$ if $\alpha L$ belongs to the convex hull of $L$ and
the origin, or equivalently, if $\alpha L \leq L$.
For $\alpha \in R$,
the set of ordered pairs
of independent vertices
$v_1, v_2 \in \mathcal{T}_{N_1}, \mathcal{T}_{N_2}$
contracted by $\alpha$ is convex
and has cardinality
$\Fix_{N_1,N_2}(\alpha)$.

\subsection{Some quadratic characters\label{sec:some-quadratic-characters}}
\label{sec:some-quadratic-chars}
We introduce a geometric interpretation
of the characters
$\chi_1, \chi_2, \chi_3 : S^0 \rightarrow \{\pm 1\}$
defined in \S\ref{sec:defn-ternary-thetas}
(see \S\ref{sec:four-analyt-interpr}
for a further Fourier-analytic interpretation).
Denote by $v_0$ the origin in $\mathcal{T}$ and
by $v_1,v_2,v_3$ its children.
For $\beta \in S^0$,
let $\alpha \in S$
be such that $(\nr(\alpha),2)=1$
and $\alpha - \beta \in \mathbb{Z}$.
Then $\alpha \equiv 1 \pod{2}$, so it fixes $v_1,v_2,v_3$
and hence permutes each of their children.
For $i \in \{1,2,3\}$, set $\eta_i(\beta) := 1$
if $\alpha$ fixes the children of $v_i$
and $\eta_i(\beta) := -1$ if it swaps them.
Define $\chi_i' := \eta_j \eta_k$
where
$\{i,j,k\} = \{1,2,3\}$.
\begin{lemma}\label{lem:triples-agree}
  The triples of characters $\chi_1,\chi_2,\chi_3$
and $\chi_1',\chi_2',\chi_3'$
coincide up to a permutation of indices.
\end{lemma}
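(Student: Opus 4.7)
The plan is to compute both triples of characters in coordinates and observe that they agree up to a permutation of indices. The computation is local at $2$, so it suffices to work at the distinguished $E \in \mathbf{Y}$ with $R_E = R$ and $S_E = S$, under the matrix identification of $S_2^0$ fixed in \S\ref{sec:defn-ternary-thetas}. I would write a typical $\beta \in S_2^0$ as the trace-zero matrix with entries $a, 2b, 2c, -a$, and take $e_1, e_2$ to be the standard basis of $E[2^\infty] = (\mathbb{Q}_2/\mathbb{Z}_2)^2$.

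Under this identification, the three children of the origin in $\mathcal{T}$ are $v_1 = \langle e_1/2 \rangle$, $v_2 = \langle e_2/2 \rangle$, and $v_3 = \langle (e_1+e_2)/2 \rangle$. Since $\alpha = m + \beta$ lies in $\mathbb{Z} + 2R$ with $\nr(\alpha) \in \mathbb{Z}_2^\times$, it reduces to the identity modulo $2R_2$ and hence fixes each $v_i$ pointwise; because it is invertible on $E[4]$, it must permute the two children of each $v_i$. To determine $\eta_i(\beta)$ I would simply apply the matrix $\alpha$ to an explicit generator of one child of $v_i$ and read off whether the image lies in that same child. For $v_1$, the two children are $\langle e_1/4 \rangle$ and $\langle e_1/4 + e_2/2 \rangle$, and a one-line computation gives $\eta_1 = (-1)^c$; by the $e_1 \leftrightarrow e_2$ symmetry, $\eta_2 = (-1)^b$; and an analogous calculation on the child $\langle (e_1+e_2)/4 \rangle$ of $v_3$ (where the two coordinates of $\alpha \cdot (e_1+e_2)/4$ differ by $0$ mod $\mathbb{Z}_2$ iff $a+b+c$ is even) gives $\eta_3 = (-1)^{a+b+c}$.

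Forming the products $\chi_i' = \eta_j \eta_k$ then yields $\chi_1' = (-1)^{a+c}$, $\chi_2' = (-1)^{a+b}$, $\chi_3' = (-1)^{b+c}$, which is exactly the cyclic permutation $(\chi_2, \chi_3, \chi_1)$ of the triple from \S\ref{sec:defn-ternary-thetas}. The main obstacle is purely bookkeeping: one must verify that $\alpha$ really permutes (rather than degenerates) the children of each $v_i$, which uses $\nr(\alpha) \in \mathbb{Z}_2^\times$, and one must check that $\eta_i$ is independent of the choice of $m$ --- this holds because two valid choices of $m$ differ by an even integer, whose action on $E[4]$ shifts by an element of $E[2]$ contained in $v_i$, and each child of $v_i$ is stable under translation by $v_i$.
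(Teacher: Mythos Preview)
Your proposal is correct and follows essentially the same approach as the paper: pick explicit matrix coordinates, compute each $\eta_i$ by applying $\alpha$ to a generator of one child of $v_i$, and then compare the resulting products $\eta_j\eta_k$ to the defining formulas for the $\chi_i$. The only differences are cosmetic: the paper labels the diagonal vertex $(e_1+e_2)/2$ as $v_1$ rather than $v_3$ and uses the specific lift $\alpha = \left(\begin{smallmatrix}1+2a & 2b\\ 2c & 1\end{smallmatrix}\right)$, which automatically has odd norm and so sidesteps the need for your separate argument that $\eta_i$ is independent of the choice of $m$.
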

\begin{proof}
  There
is a canonical conjugacy class of ring maps
$\rho : R \rightarrow M_2(\mathbb{Z}/4)$, given by the action
of $R$ on $E[4]$, under which elements $\beta \in S^0$ have
the form
\[
\rho(\beta) = \begin{pmatrix}
  a & 2 b \\
  2 c & -a
\end{pmatrix}.
\]
In that optic, 
$\chi_1(\beta), \chi_2(\beta),\chi_3(\beta)
:=
(-1)^{b+c},
(-1)^{a+c},
(-1)^{a+b}$.
Using a basis for $\mathbb{Z}_2^2$ compatible with $\rho$,
we may assume after relabeling indices
that $v_1,v_2,v_3$
correspond to the respective cyclic subgroups
$\tfrac{1}{2}(1,1)^t + \mathbb{Z}_2^2,
\tfrac{1}{2}(1,0)^t + \mathbb{Z}_2^2,
\tfrac{1}{2}(0,1)^t + \mathbb{Z}_2^2$
of $E[2]$.
Let $\beta \in S^0$.
Set
\[
\alpha := \begin{pmatrix}
  1 + 2 a & 2 b \\
  2 c & 1
\end{pmatrix} \in S.
\]
Then $(\nr(\alpha),2) = 1$ and $\alpha - \beta \in \mathbb{Z}$.
Using this choice of $\alpha$,
we compute directly that
$\eta_1(\beta), \eta_2(\beta), \eta_3(\beta) = (-1)^{a+b+c},
(-1)^c, (-1)^b$;
for example, 
$\eta_1(\beta) = +1$
iff $\alpha$ stabilizes
$\tfrac{1}{4} (1,1)^t
+ \mathbb{Z}_2^2$
iff $(\tfrac{a + b}{2}, \tfrac{c}{2})^t \in \tfrac{1}{4}(1,1)^t + \mathbb{Z}_2^2$
iff $a + b + c$ is even.
It follows as required that $\chi_i = \chi_i'$.
\end{proof}
We assume henceforth that $v_1,v_2,v_3$ have been
ordered so that $\chi_i = \chi_i'$.

\subsection{Combinatorial arguments\label{sec:local-pushforward}}
\label{sec-3-3}
\begin{lemma}\label{lem:fix-n-basic-properties}
  Let $N_1, N_2$ be nonnegative integers
  and $\alpha \in R$.
  \begin{enumerate}
  \item[(i)] For $t \in \mathbb{Z}$,
    one has $\Fix_{N_1,N_2}(t + \alpha) =
    \Fix_{N_1,N_2}(\alpha)$.
  \item[(ii)] If $N_1,N_2 \geq 2$, then
    $\Fix_{N_1,N_2}(2 \alpha) = 4 \Fix_{N_1-1,N_2-1}(\alpha)$.
  \end{enumerate}
\end{lemma}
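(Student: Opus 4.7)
Part (i) is immediate from the definition \eqref{eq:defn-fix}: for any integer $t$ and any cyclic subgroup $C \leq E[2^\infty]$, one has $t \cdot C \leq C$, so the condition $(t+\alpha) C \leq C$ is equivalent to $\alpha C \leq C$. Hence the sets counted by $\Fix_{N_1,N_2}(t + \alpha)$ and $\Fix_{N_1,N_2}(\alpha)$ coincide.

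For part (ii), my plan is to exhibit a $4$-to-$1$ surjection from the set enumerated by $\Fix_{N_1, N_2}(2\alpha)$ onto the set enumerated by $\Fix_{N_1-1, N_2-1}(\alpha)$, given by the natural map $\Psi : (C_1, C_2) \mapsto (2 C_1, 2 C_2)$. Here $2C$ denotes the image of the multiplication-by-$2$ endomorphism of $C$; for $C$ cyclic of order $2^N$ with $N \geq 1$, this is the unique subgroup of index $2$ in $C$, and in the language of \S\ref{sec:tree}, it is the parent of $C$ in the tree $\mathcal{T}$.

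The hard part will be checking that $\Psi$ is well-defined, i.e., that $2\alpha C_i \leq C_i$ implies $\alpha(2C_i) \leq 2C_i$. The naïve inclusion $\alpha(2C_i) = 2\alpha C_i \leq C_i$ only lands in $C_i$, not in the index-$2$ subgroup $2C_i$. To bridge this gap, I would fix a generator $z$ of $C_i$ (of order $2^{N_i}$) and write $2\alpha z = c z$ with $c \in \mathbb{Z}/2^{N_i}$ (possible by hypothesis). The key observation is that $\alpha$, viewed inside $R_2 \cong M_2(\mathbb{Z}_2)$, acts on $E[2^\infty] \cong (\mathbb{Q}_2/\mathbb{Z}_2)^2$ preserving each torsion subgroup $E[2^N]$; in particular, $\alpha z \in E[2^{N_i}]$ has order at most $2^{N_i}$, whence $cz = 2\alpha z$ has order at most $2^{N_i-1}$. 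This forces $c$ to be even, so $\alpha(2z) = cz \in 2C_i$, and by linearity $\alpha(2C_i) \leq 2C_i$ as required.

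The remaining verifications are routine. For surjectivity with fibre size exactly $4$: in the $3$-regular tree $\mathcal{T}$, each $D_i$ at depth $N_i - 1 \geq 1$ has exactly two children at depth $N_i$, yielding $2 \times 2 = 4$ candidate lifts $(C_1, C_2)$; each such lift is independent, because the path in $\mathcal{T}$ from $C_1$ to $C_2$ extends the path from $D_1$ to $D_2$ through the origin; and each lift automatically satisfies $2\alpha C_i = \alpha(2C_i) = \alpha D_i \leq D_i \leq C_i$. Combining these observations gives $\Fix_{N_1, N_2}(2\alpha) = 4 \Fix_{N_1-1, N_2-1}(\alpha)$.
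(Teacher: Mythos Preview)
Your proof is correct and follows essentially the same approach as the paper's. Both arguments rest on the equivalence that $\alpha$ contracts a vertex $v \in \mathcal{T}_{N-1}$ if and only if $2\alpha$ contracts each (equivalently, either) child of $v$; the paper asserts this in one line, while you spell out both directions explicitly via the generator argument and the computation $2\alpha C_i = \alpha D_i \leq D_i \leq C_i$.
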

\begin{proof}
  (i): Immediate from the definition.
  (ii):
  For  $N \geq 2$,
  an element
  $\alpha \in R$ contracts some $v \in \mathcal{T}^{N-1}$
  if and only if
  $2 \alpha$ contracts both (equivalently, either) of the children of $v$;
  each independent ordered pair 
  contributing to $\Fix_{N_1-1,N_2-1}(\alpha)$
  thus corresponds to $2^2$ independent ordered pairs
  contributing to $\Fix_{N_1,N_2}(2\alpha)$, and vice-versa.
\end{proof}
\begin{lemma}\label{lem:force-scalar-mod-2}
  Let $\alpha \in R$ with $(\nr(\alpha),2) = 1$.  Suppose
  $\Fix_{N_1,N_2}(\alpha) \neq 0$ for some $N_1,N_2 \geq 1$.
  Then $\alpha \in S$.
\end{lemma}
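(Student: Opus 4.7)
The plan is to geometrize the fixing hypothesis using the Bruhat--Tits tree $\mathcal{T}$ (\S\ref{sec:tree}) and then translate back to algebra via the reduction map $R \to R_2/2R_2 \cong M_2(\mathbb{Z}/2)$.

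First I would observe that since $\alpha \bar\alpha = \nr(\alpha)$ is odd, multiplication by $\alpha$ is invertible on each $E[2^n]$, so $\alpha$ permutes $\mathcal{T}_n$ for every $n$. In particular, any $v \in \mathcal{T}_n$ contracted by $\alpha$ is necessarily fixed: $\alpha v$ is cyclic of the same order as $v$, and $\alpha v \leq v$ forces $\alpha v = v$. Thus the hypothesis $\Fix_{N_1,N_2}(\alpha) \neq 0$ furnishes an independent pair of \emph{fixed} vertices $v_1 \in \mathcal{T}_{N_1}$, $v_2 \in \mathcal{T}_{N_2}$ with $N_1,N_2 \geq 1$.

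Next, since $\alpha$ acts as an isometry fixing the two endpoints $v_1,v_2$, it fixes the entire geodesic $[v_1,v_2]$. By independence, this geodesic passes through the origin $v_0$, and the two neighbors of $v_0$ lying on it are \emph{distinct} elements of $\mathcal{T}_1$ both fixed by $\alpha$. Since $|\mathcal{T}_1| = 3$ and $\alpha$ acts on $\mathcal{T}_1$ by a permutation, any such permutation fixing two of three elements is the identity; hence $\alpha$ fixes each of the three cyclic subgroups of order $2$ in $E[2]$.

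Finally I would translate this back to algebra. The reduction $\bar\alpha \in R_2/2R_2 \cong M_2(\mathbb{F}_2)$ stabilizes each of the three $\mathbb{F}_2$-lines in $\mathbb{F}_2^2$; a direct check shows that the only matrices with this property are the two scalars $0$ and $1$. Since $\nr(\alpha)$ is odd, $\bar\alpha \neq 0$, so $\bar\alpha = 1$. Because $R_2 = R \otimes_{\mathbb{Z}} \mathbb{Z}_2$ one has $R \cap 2R_2 = 2R$, giving $\alpha - 1 \in 2R$ and therefore $\alpha \in 1 + 2R \subseteq \mathbb{Z} + 2R = S$. The argument is short and I do not anticipate a genuine obstacle; the only mildly delicate point is the upgrading of ``contracted'' to ``fixed,'' which is essential for converting the hypothesis into an isometry statement on the tree.
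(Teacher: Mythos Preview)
Your proof is correct and follows essentially the same route as the paper: use the tree to convert the fixing hypothesis into the statement that $\alpha$ fixes two (hence all three) children of the origin, and conclude that $\alpha$ is congruent to a scalar modulo $2$. You spell out a couple of points the paper leaves implicit---the upgrade from ``contracted'' to ``fixed'' when $(\nr(\alpha),2)=1$, and the verification that only scalar matrices in $M_2(\mathbb{F}_2)$ stabilize all three lines---but these are exactly the details underlying the paper's one-paragraph argument.
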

\begin{proof}
  The hypotheses imply that $\alpha$ fixes at least one pair
  of independent vertices not equal to the origin, hence
  fixes their convex hull, hence fixes some pair of
  children of the origin,
  hence fixes all three children,
  i.e., acts trivially on $E[2]$,
  and so is congruent to a scalar modulo $2$,
  as required.
\end{proof}
\begin{lemma}\label{lem:incl-excl-tricky}
  Let $\alpha \in R$.
  Suppose $\Fix_{N_1,N_2}^\sharp(\alpha) \neq 0$ for some $N_2, N_2 \geq 2$.
  Then $\alpha \in S$.
\end{lemma}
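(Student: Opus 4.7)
The plan is to prove the contrapositive: if $\alpha \notin S$, then $\Fix_{N_1, N_2}^{\sharp}(\alpha) = 0$ for every $N_1, N_2 \geq 2$.  Under the embedding $R_2 \cong M_2(\mathbb{Z}_2)$, one has $S_2 = \mathbb{Z}_2 + 2 R_2$, so this amounts to assuming that the reduction $\bar\alpha \in M_2(\mathbb{F}_2)$ is not a scalar matrix and deducing vanishing.

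My first step is a combinatorial reformulation.  For $N_1, N_2 \geq 1$, two cyclic subgroups $C_1, C_2 \leq E[2^\infty]$ of orders $2^{N_1}, 2^{N_2}$ are independent iff their order-$2$ subgroups are distinct children of the origin.  Letting $a_{N, i}$ denote the number of contracted vertices $v \in \mathcal{T}_N$ with $v_i \leq v$, this yields
\[
  \Fix_{N_1, N_2}^{\sharp}(\alpha) = \sum_{i \neq j} c_{N_1, i}\, c_{N_2, j}, \qquad c_{N, i} := a_{N, i} - a_{N-1, i}.
\]
The elementary observation that $\alpha v \leq v$ implies $\alpha v[2] \leq v[2]$ (if $v = \mathbb{Z}_2 \xi$ then $\alpha \xi \equiv \lambda \xi \pmod{\mathbb{Z}_2^2}$ for some $\lambda \in \mathbb{Z}_2$, and $v[2] = \mathbb{Z}_2 \cdot 2^{N-1}\xi$ inherits this relation) shows that $a_{N, i} = 0$ identically whenever the child $v_i$ itself is not contracted by $\bar\alpha$.

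I now case-split on the rank of $\bar\alpha$.  If $\bar\alpha$ is invertible and non-scalar, Lemma~\ref{lem:force-scalar-mod-2} already gives $\Fix_{N_1, N_2}(\alpha) = 0$ for all $N_1, N_2 \geq 1$, so $\Fix^{\sharp} = 0$.  Otherwise $\bar\alpha$ has rank one, with kernel line $\bar\ell_K$ and image line $\bar\ell_I$; a direct check shows that the children of the origin contracted by $\bar\alpha$ are precisely those in $\{\bar\ell_K, \bar\ell_I\}$.  When $\bar\alpha^2 = 0$ (so $\bar\ell_K = \bar\ell_I$) only one child is contracted, so at most one index $i$ has $c_{\cdot, i} \neq 0$, and the $i \neq j$ constraint forces every summand to vanish.

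The main obstacle is the remaining case $\bar\ell_K \neq \bar\ell_I$, where two children of the origin are contracted and two terms of the sum are a priori nonzero.  Here I would argue that $\alpha$ diagonalizes over $\mathbb{Z}_2$: Hensel's lemma applied to $T^2 - \tr(\alpha) T + \nr(\alpha)$, whose reduction $T(T-1)$ has simple roots, produces distinct eigenvalues $\mu_0, \mu_1 \in \mathbb{Z}_2$ with $\mu_0 \equiv 0$ and $\mu_1 \equiv 1 \pmod{2}$, and the associated eigenlines reduce to $\bar\ell_K$ and $\bar\ell_I$ respectively, hence form a $\mathbb{Z}_2$-basis.  In such a diagonalizing basis, writing $v = \mathbb{Z}_2 \cdot 2^{-N}(u, w)$, the condition $\alpha v \leq v$ translates to $(\mu_0 - \lambda) u \equiv (\mu_1 - \lambda) w \equiv 0 \pmod{2^N}$ for some $\lambda$; since $\mu_0 - \mu_1$ is a $2$-adic unit, exactly one of $u, w$ must vanish mod $2^N$, forcing $v$ to lie on one of the two coordinate axes.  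Thus $a_{N, i} = 1$ at each depth for both relevant $i$, so $c_{N, i} = 0$ for $N \geq 2$, completing the proof.
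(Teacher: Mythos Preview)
Your proof is correct and follows essentially the same approach as the paper's: the same case split on the rank of $\bar\alpha$, the same appeal to Lemma~\ref{lem:force-scalar-mod-2} in the invertible case, and the same Hensel/diagonalization argument in the two-children rank-one case.  Your combinatorial reformulation $\Fix_{N_1,N_2}^{\sharp}(\alpha) = \sum_{i \neq j} c_{N_1,i}\, c_{N_2,j}$ is a clean way to package the inclusion--exclusion, but the substance is identical: the paper directly observes $\Fix_{N_1',N_2'}(\alpha) = 0$ (one child) or $\Fix_{N_1',N_2'}(\alpha) = 2$ (two children) for all $N_1',N_2' \geq 1$, which is equivalent to your $a_{N,i} \in \{0,1\}$ being constant in $N$.
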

\begin{proof}
  If $(\nr(\alpha),2) = 1$, then the conclusion follows from
  Lemma \ref{lem:force-scalar-mod-2}, so suppose
  $\nr(\alpha) \equiv 0 \pmod{2}$.  If $\alpha \in 2 R$, the
  conclusion is clear.
  It remains to show for $\alpha$ having
  rank $1$ reduction mod $2$ that
  $\Fix_{N_1,N_2}^\sharp(\alpha) = 0$.  The set $V$ of vertices
  $v \in \mathcal{T}$ contracted by $\alpha$ then contains the
  origin, is convex, and contains at least one and at most two
  children of the origin, for else the mod $2$ reduction of
  $\alpha$ would be either invertible or a scalar.
  If $V$
  contains exactly one child of the origin, then
  it contains no pair of independent vertices in the complement of the origin,
  and so
  $\Fix_{N_1',N_2'}(\alpha) = 0$ for all $N_1',N_2' \geq 1$.

  It remains to consider the case that $V$
  contains exactly two children of the origin, say $v_1, v_2$.
  The matrix of $\alpha$ on $E[2]$ with respect to generators of
  $v_1,v_2$ is then diagonal,
  degenerate, and nonzero.  Therefore $\tr(\alpha) \equiv 1 \pod{2}$;
  since $\nr(\alpha) \equiv 0 \pod{2}$,
  we deduce that $\tr(\alpha)^2 - 4 \nr(\alpha)$
  is congruent to $1$ mod $8$ and so is a square in
  $\mathbb{Z}_2^\times$.
  It follows that  $\alpha$ acts on $E[2^N]$ for each $N \geq 1$
  by a diagonal matrix with distinct entries,
  and
  so for each vertex $v \in V$
  other than the origin, $V$ contains exactly
  one of the children of $v$.
  There are thus two infinite
  non-backtracking one-sided paths $Z_1, Z_2$, starting from the
  origin of $\mathcal{T}$ and containing distinct children of the
  origin, so that $V = Z_1 \cup Z_2$.  Thus
  $\Fix_{N_1',N_2'}(\alpha) = 2$ for all $N_1', N_2' \geq 1$.
  The claim $\Fix_{N_1,N_2}^\sharp(\alpha) = 0$ for
  $N_1,N_2 \geq 2$ follows by inclusion-exclusion.
\end{proof}

\begin{proof}[Proof of Proposition \ref{prop:local-pushforward}]
  Suppose first that $N = 2$.
  By Lemma \ref{lem:incl-excl-tricky}, one has
  $\alpha \in S = \mathbb{Z} \oplus S^0$,
  thus $\alpha = m + \beta$ for some $m \in \mathbb{Z}$,
  $\beta \in S^0$.
  By Lemma \ref{lem:fix-n-basic-properties} (i),
  we reduce (adjusting $m$ as necessary) to the terminologically
  simpler case in which $\nr(\alpha)$ is odd.  We compute separately each term
  in $\Fix_{2,2}^\sharp(\alpha)$:
  \begin{itemize}
  \item Since $\alpha$ is a scalar mod $2$, it fixes all children
    $v_1,v_2,v_3$ of the origin $v_0$,
    so $\Fix_{1,1}(\alpha) = 6$ is the number of distinct ordered pairs of such.
  \item 
    $\alpha$ fixes either child of $v_i$
    iff $\eta_i(\beta) = 1$,
    in which case it fixes both children and also both of $v_j,v_k$,
    so $\Fix_{2,1}(\alpha) = \Fix_{1,2}(\alpha) = 2 \sum_{i=1,2,3}
    (1 + \eta_i(\beta))$.
  \item For a distinct ordered pair $i,j \in \{1,2,3\}$,
    $\alpha$ fixes a child of each of $v_i$ and $v_j$
    iff $\eta_i(\beta) = \eta_j(\beta) = 1$,
    in which case it fixes all $4$ such pairs of children,
    hence
    $\Fix_{2,2}(\alpha)
    = 2 \sum _{\{i,j\} \subseteq \{1,2,3\}}
    (1 + \eta_i(\beta)) (1 + \eta_j(\beta))$, with the sum taken over unordered pairs.
  \end{itemize}
  Thus
  \[
  \Fix_{2,2}^\sharp(\alpha)
  =
  2 \sum _{\{i,j\} \subseteq \{1,2,3\}}
  (1 + \eta_i(\beta))(1 + \eta_j(\beta))
  - 4 \sum_{i \in \{1,2,3\}} (1 + \eta_i(\beta))
  + 6.
  \]
  Simplifying,
  we obtain $\Fix_{2,2}^\sharp(\alpha) = 2 \sum_{i=1,2,3} \chi_i(\beta)$,
  as required.

  Suppose now that $N \geq 3$
  and that the conclusion holds
  for smaller values of $N$.
  By Lemma \ref{lem:incl-excl-tricky}, $\alpha \in S = \mathbb{Z} + 2 R$,
  and so there is $t \in \mathbb{Z}$ and  $\gamma \in R$
  for which $\alpha - t  = 2 \gamma$.
  By
  Lemma \ref{lem:fix-n-basic-properties},
  we have $\Fix_{N,N}^\sharp(\alpha) = \Fix_{N,N}^\sharp(2 \gamma)
  =  4 \Fix_{N-1,N-1}^\sharp(\gamma)$,
  and so the conclusion
  follows inductively.
\end{proof}

\subsection{The mean statistics\label{sec:mean-stats}}
\label{sec-2-9}
The following result was promised in \S\ref{sec-1}
(see \eqref{eq:linear-statistics-justification})
to justify interpretating $V_N$ as a variance.
We do not use it otherwise.
\begin{proposition}\label{prop:mean-is-uniform}
  Let $N \geq 2$.
  Then
  \begin{enumerate}
  \item[(i)]
    $|\mathcal{F}_N| = 
    (1 + 2^{-1}) (1 - 2^{-1})^2 2^{2 N} \frac{23-1}{12}$.
  \item[(ii)] $|\mathcal{F}_N|^{-1} \sum_{\varphi \in \mathcal{F}_N}
    \mu_\varphi
    = \mu$.
  \end{enumerate}
\end{proposition}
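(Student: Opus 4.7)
The plan is to prove (ii) via Fourier coefficient matching using the seesaw-type identity \eqref{eq:refined-seesaw-identity} established in \S\ref{sec:alg-input}, and to prove (i) via a trace-formula computation isolating the central contribution and using Proposition \ref{prop:theta-kernel-pushforward} to rule out non-central ones.

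For (ii), since $\mathbf{Y}$ has three elements and $\{1, \Psi_1, \Psi_2\}$ is a basis of $L^2(\mathbf{Y})$ with $\Psi_k$ mean-zero, the desired identity $|\mathcal{F}_N|^{-1}\sum_\varphi \mu_\varphi = \mu$ of probability measures is equivalent to $\sum_\varphi \mu_\varphi(\Psi_k) = 0$ for $k = 1, 2$. I would extract this from \eqref{eq:refined-seesaw-identity},
\[
\sum_\varphi \mu_\varphi(\Psi_k) \Phi_\varphi(z) = c_N \theta(z) h_k(2^{2N} z),
\]
by matching the coefficient of $q^1$ on both sides. On the LHS, each $\Phi_\varphi$ contributes $\mu_\varphi(\Psi_k)$, since $\Phi_\varphi$ begins $y q + O(y q^3)$ with normalization $\lambda_\varphi(1) = 1$. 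On the RHS, $\theta$ begins with $y^{1/4} q$, and $h_k(2^{2N}z)$ expands as $y^{3/4}$ times a power series in $q^{2^{2N-4}}$ whose constant term is proportional to $\mu_0(\Psi_k)$. But $\mu_0(\Psi_k) = 3\sum_E \Psi_k(E)/w_E = 3 \int_{\mathbf{X}} \Psi_k = 0$ by the mean-zero property of $\Psi_k$, so $h_k(2^{2N}z)$ has smallest nonzero $q$-power at least $q^{2^{2N-4}}$; multiplying by $\theta$ gives a product with smallest $q$-power at least $q^{1+2^{2N-4}} \geq q^2$ for $N \geq 2$, so the $q^1$-coefficient on the RHS vanishes and the required identity follows.

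For (i), I would compute $|\mathcal{F}_N| = \Tr(\rho_{\reg}(f)) = \int_{\mathbf{X}} \sum_{\gamma \in \Gamma} f(g^{-1}\gamma g)\, dg$ by unfolding into conjugacy classes. The central contribution comes from $\gamma = \pm 1$, the only elements of $Z(\Gamma) = \{\pm 2^k\}$ lying in $\supp(f) \subseteq K$. A short inclusion-exclusion on the index formula $[K : K_{-N_1..N_2}] = 3 \cdot 2^{N_1+N_2-1}$ (via the tree parametrization of \S\ref{sec:tree}) gives $f(\pm 1) = 3 \cdot 2^{2N-4}$, and the Eichler mass formula gives $\vol(\mathbf{X}) = \sum_E 1/w_E = 11/6$, so the central contribution is $2 f(1) \vol(\mathbf{X}) = 11 \cdot 2^{2N-4}$, matching the stated formula $(1+2^{-1})(1-2^{-1})^2 2^{2N}(23-1)/12$. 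For the non-central orbital integrals, I would average over $K$-conjugation (permissible since we integrate against the $K$-invariant constant $1$) and invoke Proposition \ref{prop:theta-kernel-pushforward}: a non-central $\gamma \in R^\times$ (finitely many, since $B$ is definite) contributes only if its $G$-conjugate lies in $\mathbb{Z}_2 + 2^{N-2} S_2^0$, which forces $\tr(\gamma)$ to be even and $v_2(\tr(\gamma)^2 - 4\nr(\gamma)) \geq 2N-2$; these conditions are incompatible with $\nr(\gamma) = \pm 1$ and $\gamma \neq \pm 1$ once $N \geq 2$.

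The main obstacle is the rigorous bookkeeping of the non-central orbital integrals in (i). A conceptually cleaner alternative, bypassing this analysis entirely, is to invoke the Jacquet--Langlands bijection of \S\ref{sec:eichl-langl-lifts} and compute $\dim S_2^{\new}(\Gamma_0(2^{2N}\cdot 23))$ by Möbius inversion of the standard genus formula for $X_0(M)$; for $N \geq 1$ the group $\Gamma_0(2^{2N}\cdot 23)$ has no elliptic points (since $4 \mid M$ and $23 \equiv 2 \pmod 3$), simplifying the computation considerably.
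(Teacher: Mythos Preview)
Your approaches are correct in spirit but differ substantially from the paper's, so let me compare.

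\textbf{Part (ii).} Your $q$-expansion argument via \eqref{eq:refined-seesaw-identity} works: the constant term of $h_k$ vanishes because $\mu_0(\Psi_k) = 3\int_{\mathbf{X}}\Psi_k = 0$, so the right-hand side has no $q^1$-term for $N\geq 2$, forcing $\sum_\varphi \mu_\varphi(\Psi_k)=0$. This is a nice reuse of the paper's main identity. The paper instead takes $\Psi$ general in the $n=1$ case of \eqref{eq:basic-trace-kernel-identity} and shows directly that the inner sum $\sum_{\gamma}\phi^K(g_E^{-1}\gamma g_E)$ is \emph{independent of $E$}, which immediately gives $\sum_\varphi\mu_\varphi = c\cdot\mu$ for some constant $c$; since both are probability measures up to scaling, (ii) follows. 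The paper's argument is lighter: it invokes only the elementary Lemma~\ref{lem:elliptic-stuff} (essentially a torsion-freeness statement, analogous to $\Gamma_0(4)/\{\pm1\}$ being torsion-free) rather than the full Proposition~\ref{prop:theta-kernel-pushforward}, which your route requires via its dependence on \eqref{eq:refined-seesaw-identity}.

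\textbf{Part (i).} Your trace-formula outline is essentially correct, and the central computation $2f(1)\vol(\mathbf{X})=11\cdot 2^{2N-4}$ is right. Two small points: first, the non-central $\gamma$ range over the various $R_E^\times$, not just $R^\times$, though this changes nothing since only $\tr(\gamma)$ and $\nr(\gamma_0)$ matter. Second, your stated discriminant condition $v_2(\tr(\gamma)^2-4\nr(\gamma))\geq 2N-2$ does not by itself exclude the case $N=2$, $\tr(\gamma)=0$ (where $v_2(-4)=2$); one must observe additionally that $\nr(S_2^0)\subseteq\{-a^2-4bc\}$ never hits $1\pmod 4$. Once patched, the argument goes through. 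The paper again bypasses Proposition~\ref{prop:theta-kernel-pushforward} here: having shown the inner sum equals the same constant $c$ for every $E$, it simply sets $\Psi=1$ and applies Eichler's mass formula. This unified route handles (i) and (ii) simultaneously with one elementary lemma, whereas your two arguments are independent and both lean on the heavier local computation.

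Your proposed alternative via Jacquet--Langlands and newform dimension formulas is exactly what the paper suggests in its footnote.
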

The proof requires a lemma
similar to the torsion-freeness of $\Gamma_0(4)/\{\pm 1\}$:
\begin{lemma}\label{lem:elliptic-stuff}
  Let $N_1, N_2 \geq 0$ with $N_1 + N_2 \geq 2$.
  Let $\alpha \in R$ with $\nr(\alpha) = 1$.
  Then
  $\Fix_{N_1,N_2}(\alpha) =  1_{\alpha = \pm 1} |\mathcal{L}_{N_1,N_2}|$.
\end{lemma}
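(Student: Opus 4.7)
The plan is to reduce to the case where $\alpha$ is a non-trivial torsion unit, and then show that such an $\alpha$ cannot fix two independent vertices whose depths sum to $\geq 2$. Since $B$ is definite, $\nr(\alpha) = 1$ with $\alpha \in R$ forces $\alpha \in R^\times$ (the conjugate $\bar\alpha = \tr(\alpha) - \alpha \in R$ is a two-sided inverse), and $R^\times$ is finite. The integer trace satisfies $|\tr(\alpha)| \leq 2$ because the norm-one subgroup of $B \otimes \mathbb{R}$ is compact; the extremes $\tr(\alpha) = \pm 2$ yield $(\alpha \mp 1)^2 = 0$ in the division ring $R$, hence $\alpha = \pm 1$. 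Otherwise $\tr(\alpha) \in \{-1, 0, 1\}$ and $\alpha$ has order $3, 4$, or $6$ respectively. The case $\alpha = \pm 1$ gives the lemma immediately since $\pm 1$ preserves every cyclic subgroup of $E[2^\infty]$.

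Assume henceforth $\alpha \in R^\times \setminus \{\pm 1\}$ and suppose for contradiction that $\alpha$ preserves independent vertices $v_1 \in \mathcal{T}_{N_1}$, $v_2 \in \mathcal{T}_{N_2}$ with $N_1 + N_2 \geq 2$. I split into two cases. In Case A, both $N_i \geq 1$, so each $v_i$ contains a unique child of the origin, and independence forces these two children to be distinct. Then $\alpha$ preserves two distinct lines of $E[2] \cong \mathbb{F}_2^2$ (each with a single non-zero element), hence acts as the identity on $E[2]$. This gives $\alpha - 1 \in 2 R_2 \cap R = 2R$; write $\alpha = 1 + 2\gamma$ with $\gamma \in R$. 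The constraint $\nr(\alpha) = 1$ becomes $\tr(\gamma) = -2 \nr(\gamma)$, and combining $\nr(\gamma) \in \mathbb{Z}_{\geq 0}$ with $|\tr(\alpha)| = |2 + 2\tr(\gamma)| \leq 2$ yields only $\gamma = 0$ (so $\alpha = 1$) or $(\tr\gamma, \nr\gamma) = (-2, 1)$, in which case $(\alpha + 1)^2 = 0$ and hence $\alpha = -1$. Either contradicts $\alpha \notin \{\pm 1\}$.

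In Case B, one of $N_1, N_2$ equals $0$ while the other is $\geq 2$; WLOG $N_2 \geq 2$, and $\alpha$ preserves $v := v_2 \cong \mathbb{Z}/2^{N_2}$, acting on $v$ as multiplication by some $u \in (\mathbb{Z}/2^{N_2})^\times$ with $u^{\ord(\alpha)} = 1$. If $\ord(\alpha) = 3$, then $u = 1$ (since the $2$-group $(\mathbb{Z}/2^{N_2})^\times$ has no element of order $3$); extending a generator of $v$ to a $\mathbb{Z}/2^{N_2}$-basis of $E[2^{N_2}]$, the matrix of $\alpha$ takes the form $\left(\begin{smallmatrix} 1 & * \\ 0 & 1 \end{smallmatrix}\right)$ (the bottom-right entry equals $\det(\alpha) \equiv 1$), so $\tr(\alpha) \equiv 2 \pmod{2^{N_2}}$; combined with $\tr(\alpha) = -1$ this gives $3 \equiv 0 \pmod{2^{N_2}}$, impossible for $N_2 \geq 2$. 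If $\ord(\alpha) = 4$, then $\alpha^2 = -1$, so $u^2 \equiv -1 \pmod{2^{N_2}}$; but $-1$ is not a square modulo $4$. If $\ord(\alpha) = 6$, then $\alpha^2$ has order $3$ and also preserves $v$, reducing to the previous case.

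The main obstacle is Case B's order-$4$ subcase, where one exploits that $\mathbb{Q}_2(i)/\mathbb{Q}_2$ is ramified, manifest here as the non-solvability of $u^2 \equiv -1 \pmod 4$. Conceptually, the overall argument parallels the classical torsion-freeness of $\Gamma_0(4)/\{\pm 1\}$ noted before the lemma and reflects the general fact that finite-order elements of $\GL_2(\mathbb{Q}_2)$ have fixed-point sets of diameter at most $1$ on the Bruhat--Tits tree.
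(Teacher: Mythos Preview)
Your proof is correct. It differs from the paper's in organization: you split into Case~A (both $N_i \geq 1$) and Case~B (one $N_i = 0$, the other $\geq 2$), handling each separately via direct arithmetic with $\alpha = 1 + 2\gamma$ and a case analysis on $\ord(\alpha) \in \{3,4,6\}$. The paper instead gives a single uniform argument: choose a basis $v_1, v_2$ of $E[4]$ with $\mathbb{Z} v_i \supseteq C_i \cap E[4]$ and write $\alpha$ as a matrix $\left(\begin{smallmatrix} a & b \\ c & d \end{smallmatrix}\right) \in \GL_2(\mathbb{Z}/4)$; the hypothesis $N_1 + N_2 \geq 2$ forces $bc \equiv 0 \pmod{4}$, whence $ad \equiv 1 \pmod{4}$ and $\tr(\alpha) \equiv a + d \equiv 2 \pmod{4}$, contradicting $\tr(\alpha) \in \{-1,0,1\}$. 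Your Case~A essentially rederives this congruence (your $\tr(\alpha) = 2 - 4\nr(\gamma) \equiv 2 \pmod{4}$), while your Case~B trades the uniform mod-$4$ argument for order-by-order reasoning; this costs a few extra lines but gives a slightly more hands-on picture, e.g., making the role of the ramification of $\mathbb{Q}_2(i)/\mathbb{Q}_2$ explicit in the order-$4$ subcase. The paper's version is shorter and avoids the case split, but both arguments encode the same torsion-freeness phenomenon.
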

\begin{proof}
  If $\alpha = \pm 1$, then it fixes every subgroup.
  Assume otherwise.
  Then $\alpha$ is a non-scalar unit in a definite quaternionic
  order,
  so $\nr(\alpha) = 1$
  and $\tr(\alpha) \in \{-1,0,1\}$.
  Suppose $\Fix_{N_1,N_2}(\alpha) \neq 0$,
  so that $\alpha C_1 = C_1, \alpha C_2 = C_2$
  for some $(C_1, C_2) \in \mathcal{L}_{N_1,N_2}$.
  Consider the
  matrix $\left(
    \begin{smallmatrix}
      a&b\\
      c&d
    \end{smallmatrix}
  \right) \in \GL_2(\mathbb{Z}/4 \mathbb{Z})$
  of $\alpha$ on $E[4]$ with respect to a basis $v_1, v_2$ for
  which $\mathbb{Z} v_i \geq C_i \cap E[4]$.
  From $N_1 + N_2 \geq 2$
  we obtain $b c \equiv 0 \pmod{4}$,
  hence from $\nr(\alpha) = 1$ that
  $a d \equiv 1 \pmod{4}$ and so $\tr(\alpha) \equiv a + d \equiv 2
  \pmod{4}$,
  contradicting that $\tr(\alpha) \in \{-1,0,1\}$.
\end{proof}
We now deduce Proposition \ref{prop:mean-is-uniform}.
Let $\Psi : \mathbf{Y} \rightarrow \mathbb{R}$.
Let $g_E \in G$
represent $E \in \mathbf{Y}$.
Recall from
\S\ref{sec:measures}
that $\int_{\mathbf{X}} \Psi = \sum_{E \in \mathbf{Y}} \frac{\Psi(E)}{w_E}$.
By the $n=1$ case of
\eqref{eq:trace-vs-theta-kernels}
and
\eqref{eq:basic-trace-kernel-identity}
applied
to $\Psi$ rather than $\Psi_k$,
we have
$\sum_{\mathcal{F}_N} \mu_\varphi(\Psi) = \int_{\mathbf{X}} \Psi(g) \sum_{\Gamma} f(g^{-1}
\gamma g)
= \sum_{E \in \mathbf{Y}}
\frac{\Psi(E)}{w_E} \sum_{\Gamma} \phi^K(g_E^{-1} \gamma g_E)$.
If $g_E = 1$, then
\eqref{eq:geometric-interpretation-of-local-pushforward}
and Lemma \ref{lem:elliptic-stuff}
give
$\sum_{\Gamma} \phi^K(\gamma) = (1/2)  \sum_{\alpha \in R :
  \nr(\alpha) = 1}\Fix^\sharp_{N,N}(\alpha)
= |\mathcal{L}_{N,N}|
- |\mathcal{L}_{N-1,N}| 
- |\mathcal{L}_{N,N-1}| 
+ |\mathcal{L}_{N-1,N-1}|
= 2^{2 N}(1 + 2^{-1})(1 - 2^{-1})^2$.
By the proof of Lemma \ref{lem:elliptic-stuff} applied to $R_E := R[1/2] \cap g_E R_2 g_E^{-1}$
rather than $R$, we obtain the same formula for
$\sum_{\Gamma} \phi^K(g_E^{-1} \gamma g_E)$.  Since the
$\mu_\varphi$ are probability measures, we deduce (ii).  By
taking $\Psi = 1$ and applying Eichler's mass formula
$\sum_{E \in \mathbf{Y}} 1/w_E = \frac{23-1}{12}$
\cite{MR894322}, we obtain (i).\footnote{
  One could alternatively apply the Eichler lift and cite known dimension
  formulas for the space of newforms
  on $\Gamma_0(2^{2N} \cdot 23)$ \cite{MR2141534}.  }

\section{Proof of the analytic input}
\label{sec:analytic-input}
\subsection{Overview}
\label{sec-3-1}
We now prove \eqref{eq:sketch-3}.
Recall from \S\ref{sec:reduction-proof} the definition of
$\theta$.
Recall that $|\theta|^2 \notin L^2$.
Fix a modular form $\Phi$ 
on some congruence quotient
satisfying
\begin{equation}\label{eq:postulated-growth-bound}
  \Phi(z) \ll \height(z)^{1/2-\delta} \text{ for some } \delta > 0.
\end{equation}
Let $N \rightarrow \infty$ be a positive integral parameter.
We aim to show that the translates $t(2^{-N}) \Phi(z) := \Phi(2^{2 N} z)$
satisfy
\begin{equation}\label{eq:claimed-translate-estimate}
  \langle |\theta|^2, t(2^{-N}) \Phi  \rangle
= \langle |\theta|^2, 1 \rangle
\langle 1, \Phi  \rangle
+ O_\Phi(N 2^{-N}).
\end{equation}
This gives \eqref{eq:sketch-3} because $h_k, h_l$ are fixed and cuspidal.
The contents
of this section have been developed much more generally
in our preprint \cite{nelson-theta-squared};
we retain this section for completeness,
noting that we require here only a very special case
of the general results of \cite{nelson-theta-squared}.
The reader might profitably consult
\cite[\S3]{nelson-theta-squared} for a toy version
of the argument to follow.

\subsubsection{Remark}
The families $\mathcal{F}_N$ considered in this article consist
of automorphic forms with trivial central character.  If one
instead considers families of forms with central character
$\chi = \prod \chi_v : \mathbb{A}^\times/\mathbb{Q}^\times
\rightarrow \mathbb{C}^{(1)}$
satisfying $\chi_v(-1) = -1$ for at least one place $v$, then
it turns out that a
\emph{cuspidal} elementary theta function takes the place of
$\theta$ in our argument.
The analogue of \eqref{eq:claimed-translate-estimate}
then follows (most directly
with the weaker error $O(N 2^{-(1-2\vartheta) N})$)
from the standard $L^2$-based estimate
\eqref{eq:spectral-motivation}
and the considerations of this section become unnecessary.

\subsection{}
Since $|\theta|^2$ lives on $\Gamma_0(8)$,
we reduce formally
(for mild technical convenience)
to the case that $\Phi$ belongs to the space
\[
\mathcal{A}(2^\infty) := \varinjlim_{n \rightarrow \infty }
\left\{ \text{ smooth } \Phi : \Gamma(2^n) \backslash \mathbb{H} \rightarrow \mathbb{C} \right\}.
\]
We equip $\mathcal{A}(2^{\infty})$ with the normalized Petersson
inner product
$\langle , \rangle$
as in
\S\ref{sec:some-conventions}.
We consider the pairing $\langle \Phi_1, \Phi_2 \rangle$
to be defined whenever $|\Phi_1 \Phi_2|$ is integrable.
\subsection{Change of polarization and Poisson summation}
\label{sec-3-2}
We begin by developing a regularized spectral decomposition of
the function $|\theta|^2 \notin L^2$.
The transformations that
follow amount to regarding $|\theta|^2$ as the restriction to
the first factor of a theta kernel on $\SL_2 \times \O_2$ and
decomposing the latter with respect to the action of the second
factor $\O_2$.
By definition,
\[
  |\theta|^2(z) = y^{1/2} \sum _{\substack{
      m,n \in \mathbb{Z} : \\
      \gcd(m,2) = \gcd(n,2) = 1
    }
  }
  e((m^2-n^2) z)
  \exp(-2 \pi (m^2 + n^2) y).  
\]
We change variables $m,n := (\mu+\nu)/2, (\mu-\nu)/2$
and apply Poisson summation to $\nu$.
To keep track of the $2$-adic summation conditions and weights
that intervene,
it will be technically convenient
to introduce some $2$-adic analysis.
Thus, define the Schwartz--Bruhat function $\phi \in
\mathcal{S}(\mathbb{Q}_2^2)$
by $\phi(m,n) := 1_{\mathbb{Z}_2^\times}(m)
1 _{\mathbb{Z}_2^\times}(n)$,
denote by $\psi : \mathbb{Q}_2 \rightarrow \mathbb{C}^{(1)}$
the standard character for which
$e(x) \psi(x) = 1$ for $x \in \mathbb{Z}[1/2]$,
and introduce the partial Fourier transform
$\mathcal{F} \phi \in \mathcal{S}(\mathbb{Q}_2^2)$
by
\[
  \mathcal{F} \phi(y_1,y_2) := \int _{t \in \mathbb{Q}_2}
  \phi \left( \frac{y_1 + t}{2},
    \frac{y_1 - t}{2}
  \right)
  \psi(y_2 t) \, d t
\]
where $d t$ assigns unit volume to $\mathbb{Z}_2$.
By Poisson summation
for $\mathbb{Z}[1/2] \hookrightarrow \mathbb{R} \times \mathbb{Q}_2$,
\begin{align*}
  |\theta|^2(z)
  &=
    y^{1/2} \sum _{m,n \in \mathbb{Z}[1/2]}
    \phi(m,n) e((m^2-n^2) x)
    \exp(-2 \pi(m^2+n^2)y)
    \\
  &=
    y^{1/2} \sum _{\mu,\nu \in \mathbb{Z}[1/2]}
    \phi \left( \frac{\mu + \nu }{2}, \frac{\mu - \nu }{2}
    \right)
    e(\mu \nu x)
    \exp(-\pi(\mu^2+\nu^2)y)
    \\
  &=
    \sum _{\mu,\nu \in \mathbb{Z}[1/2]}
    \mathcal{F} \phi ( \mu,\nu)
    \exp(-\pi((\mu x + \nu)^2/y+\mu^2y))
    \\
  &=
    \sum _{\mu,\nu \in \mathbb{Z}[1/2]}
    \mathcal{F} \phi ( \mu,\nu)
    \exp(-\pi|\mu z + \nu|^2/y).
\end{align*}
\subsection{Some notation}
\label{sec-3-4}
Introduce the general notation
\[
  n(b) := \begin{pmatrix}
    1 & b \\
    & 1
  \end{pmatrix},
  \quad
  t(a) := \begin{pmatrix}
    a &  \\
    & a^{-1}
  \end{pmatrix}, \quad e_2 := (0,1).
\]
Denote
by $N_2, T_2, B_2$ the subgroups
of $\SL_2(\mathbb{Q}_2)$
consisting of elements of the respective forms
$n(\ast), t(\ast), n(\ast) t(\ast)$.
Denote by
$\widetilde{\Gamma}_\infty <
\SL_2(\mathbb{Z}[1/2])$ and  
$\Gamma_\infty <
\SL_2(\mathbb{Z})$   
the intersections with the upper-triangular Borel subgroups;
the corresponding quotients
are $\mathbb{P}^1(\mathbb{Z}[1/2]) \cong
\mathbb{P}^1(\mathbb{Z})$.
\subsection{Folding up, Mellin expansion}
\label{sec-3-5}
Each $(\mu,\nu) \in \mathbb{Z}[1/2]^\times - \{(0,0)\}$
is uniquely of the form $\lambda e_2 g$
for some $g \in \widetilde{\Gamma}_\infty \backslash
\SL_2(\mathbb{Z}[1/2])$
and $\lambda \in \mathbb{Z}[1/2] - \{0\}$.
Since then $|\mu z + \nu|^2/y = \lambda^2 / \Im(g z)$,
we obtain
\[
  |\theta|^2(z) = \mathcal{F} \phi(0,0)
  + \sum _{\gamma \in \widetilde{\Gamma }_\infty \backslash
    \SL_2(\mathbb{Z}[1/2])}
  F(\gamma z, \gamma)
\]
where $F : \mathbb{H} \times \SL_2(\mathbb{Q}_2) \rightarrow
\mathbb{C}$
is given by
\[
  F(z,g)
  :=
  \sum _{\lambda \in \mathbb{Z}[1/2] - \{0\}}
  \exp(-\pi \lambda^2 / \Im(z))
  F \phi(\lambda e_2 g).
\]
The second term is an incomplete Eisenstein series
which we now study by Mellin expansion.
For fixed $z \in \mathbb{H}$ and $g \in \SL_2(\mathbb{Q}_2)$,
the function of the variable $a = (a_\infty,a_2)
\in \mathbb{R} ^\times \times \mathbb{Q} _2 ^\times$
given by
$a \mapsto F(a_\infty^2 z, t(a_2) g)$
is left-invariant by
the diagonal embedding
of $\mathbb{Z}[1/2]^\times$, right-invariant
by $\{\pm 1\} \hookrightarrow \mathbb{R}_+^\times$,
decays rapidly as $|a| := |a_\infty|_\infty |a_2|_2$
tends to zero, and is $O(|a|^{O(1)})$
as $|a| \rightarrow \infty$.
It thus admits a Mellin expansion
indexed by
\[
  \mathfrak{X} := \left\{
    \chi = (\chi_\infty, \chi_2)
    \in \Hom(\mathbb{R}^\times \times \mathbb{Q}_2^\times,
    \mathbb{C}^\times)
    \Bigg\bracevert
    \begin{array}{lr}
      \chi|_{\mathbb{Z}[1/2]^\times} = 1, \\
      \chi_\infty(-1) = 1
    \end{array}
  \right\}
\]
and given for large enough $c > 2$ by
$F(z,g) = \int_{\chi \in \mathfrak{X} : \Re(\chi) = c}
F_\chi(z,g) \, d \chi$
with
\[
  F_\chi(z,g) := \int_{a \in \mathbb{Z}[1/2]^\times \backslash
    (\mathbb{R}^\times \times \mathbb{Q}_2^\times)}
  \chi^{-1}(a) F(a_\infty^2 z, t(a_2) g) \, d^\times a.
\]
Here we normalize measures by taking on $\mathbb{Z}[1/2]^\times
\backslash (\mathbb{R}^\times \times \mathbb{Q}_2^\times)$
the quotient of the product of the standard Haar measures on
$\mathbb{R}^\times$
and $\mathbb{Q}_2^\times$
assigning unit volume to $(1,e)$ and $\mathbb{Z}_2^\times$,
respectively,
and on each
$\{\chi \in \mathfrak{X} : \Re(\chi) = c \}$ the dual measure
$d \chi$.
Concretely, $\mathfrak{X}$ identifies with the set of pairs
$\chi \leftrightarrow (s_\chi,\omega_\chi)$,
where $s_\chi \in \mathbb{C}$
and $\omega_\chi : \mathbb{Z}_2^\times \rightarrow
\mathbb{C}^{(1)}$
is a character satisfying $\omega_\chi(-1) = 1$.
This identification is determined by requiring that
$\chi_\infty(y) = |y|^{s_\chi}$ for $y \in \mathbb{R}^\times$
and $\chi_2(2^n u) = 2^{- n s_\chi} \omega_\chi(u)$
for $n \in \mathbb{Z}$ and $u \in \mathbb{Z}_2^\times$.
The real part, analytic conductor and dual measure
are given in these coordinates by $\Re(\chi) = \Re(s_\chi)$,
$C(\chi) = (1 + |s_\chi|) C(\omega_\chi)$
and $d \chi = \frac{d s_\chi}{2 \pi i } \,  d \omega_\chi$,
where
$d \omega_\chi$ denotes counting measure.
We remark that only those $\chi$ with $C(\omega_\chi) \leq 2^4$
(say)
are required in our argument.
\subsection{$L$-functions}
\label{sec-3-6}
For $\chi \in \mathfrak{X}$,
write $\chi_p(p) := \chi_\infty(p)^{-1} \chi_2(p)^{-1}$ for $p > 2$
and define
\[
\Lambda(\chi,s) := L(\chi_\infty,s) L(\chi_2,s) \prod_{p>2} (1
- \chi_p(p) p^{-s})^{-1}
\]
for $\Re(s)$ large enough
and in general by meromorphic continuation.
Thus $\Lambda(\chi,s)$ is the completed $L$-function
obtained by regarding $\chi$ as a Hecke character unramified
outside $\{\infty, 2\}$.
For orientation, we record that the element $|.|^s \in
\mathfrak{X}$
with components $(|.|_\infty^s, |.|_2^s)$
has $\Lambda(|.|^s,0) = \xi(s) = \pi^{-s/2} \Gamma(s/2)
\zeta(s)$.
\subsection{Unfolding}
\label{sec-3-7}
We now explicate $F_\chi$
further by opening the sum defining $F$.
The positive odd integers give representatives for the
$\mathbb{Z}[1/2]^\times$-orbits
on $\mathbb{Z}[1/2] - \{0\}$,
so $F_\chi(z,g)$ unfolds to
\[
\sum _{\substack{
    \lambda \geq 1 : \\
    \gcd(\lambda,2) = 1
  }}
\chi^{-1}(\lambda)
\int _{a \in \mathbb{R}^\times \times \mathbb{Q}_2^\times }
\chi^{-1}(a)
\exp \left( \frac{-\pi }{ \Im(z) a_\infty^2}\right)
\mathcal{F} \phi(a_2^{-1} e_2 g) \, d^\times a.
\]
By evaluating the Dirichlet series and local Tate integrals,
we obtain
$F_\chi(z,g) = \Lambda(\chi,0) f_\chi(z,g)$
with
\[
f_\chi(z,g) := \frac{\chi_\infty(\Im(z)^{1/2})}{L(\chi_2,0)}
\int_{a \in \mathbb{Q}_2^\times}
\chi_2(a) \mathcal{F} \phi(a e_2 g) \, d^\times a.
\]
There exists an open subgroup $U \leq \SL_2(\mathbb{Z}_2)$,
independent of everything,
so that $f_\chi(z, g u) = f_\chi(z,g)$ for all $u \in U$.
By the theory of local Tate integrals or direct evaluation,
one has
$\|f_\chi \|\ll 1$ for all $\chi \in \mathfrak{X}$
with $-10c \leq  \Re(\chi) \leq 10c$, say.
\subsection{Induced representations, Eisenstein series}
\label{sec-3-8}
The function $f_\chi$
belongs to the space
\[
\mathcal{I}(\chi) :=
\left\{
  f : \mathbb{H} \times \SL_2(\mathbb{Q}_2) \rightarrow
  \mathbb{C}
  \Bigg\bracevert
  \begin{array}{lr}
    f(z, n(b)  t(a) g)
    = \chi(\Im(z)^{1/2},a) f(i,g) \\
    \text{ for }
    z,a,b,g \in \mathbb{H},\mathbb{Q}_2^\times,
    \mathbb{Q}_2,\SL_2(\mathbb{Q}_2), \\
    f \text{ is smooth}
  \end{array}
\right\}
\]
which
arises naturally as the $\SO(2)$-fixed
subspace of the representation of $\SL_2(\mathbb{R} \times
\mathbb{Q}_2)$ induced by $\chi$
(without normalization).
The space $\mathcal{I}(\chi)$ is stable
for the Laplacian,
which acts by the scalar
$\tfrac{1}{2} s_\chi(\tfrac{1}{2} s_\chi-1)$,
and under right translation by $\SL_2(\mathbb{Q}_2)$.
Denote by $\Eis : \mathcal{I}(\chi) \rightarrow
\mathcal{A}(2^\infty)$
the standard intertwiner
given by
\[
\Eis(f)(z) :=
\sum _{\gamma \in \widetilde{\Gamma }_\infty \backslash
  \SL_2(\mathbb{Z}[1/2])}
f(\gamma z, \gamma)
= \sum _{\gamma \in \Gamma_\infty \backslash
  \SL_2(\mathbb{Z})}
f(\gamma z, \gamma)
\]
for $\Re(\chi) > 2$
and in general by meromorphic continuation along flat sections.
This assignment is equivariant for $\Delta$ and
$\SL_2(\mathbb{Q}_2)$; the latter acts on
$\mathcal{A}(2^\infty)$
by $g \Phi(z) := \Phi(\alpha^{-1} z)$
for $\alpha \in \SL_2(\mathbb{Z}[1/2])$
taken $2$-adically close enough to $g \in \SL_2(\mathbb{Q}_2)$.
Set $\Eis^*(f) := \Lambda(\chi,0) \Eis(f)$;
it is defined for all $\chi \neq |.|^2$
with $\Re(\chi) \geq 1$,
noting that the pole of $\Lambda(\chi,0)$ at $\chi = |.|^1$
cancels the simple zero of $\Eis$ at that parameter.
Then
\[
|\theta|^2(z)
= \mathcal{F} \phi(0,0)
+ \int _{\chi \in \mathfrak{X} : \Re(\chi) = c}
\Eis^*(f_\chi)(z) \, d \chi.
\]

\subsection{Example}
\label{sec:example-eis-full-level}
Suppose $\chi = |.|^s$
and $f^0_\chi \in \mathcal{I}(\chi)$
is defined by requiring that $f^0_\chi(i,k) = 1$ for all $k \in \SL_2(\mathbb{Z}_2)$.
Then
$\Eis(f^0_\chi)(z) = E_{s/2}(z)$
and $\Eis^*(f^0_\chi)(z) = \xi(s) E_{s/2}(z)$
with $E_s(z) = y^s + \dotsb$
as in \S\ref{sec-1}.

\subsection{Measures}
\label{sec-3-9}
Equip $N_2, T_2$ with the measures transported by
the isomorphisms $n : \mathbb{Q}_2 \cong N_2, t :
\mathbb{Q}_2^\times \cong T_2$,
$B_2$ with the left Haar compatible with
$B_2/N_2 \cong T_2$ and the chosen measures on $N_2, T_2$,
and $\SL_2(\mathbb{Q}_2)$ with the Haar inducing a quotient Haar for which $N_2 \backslash \SL_2(\mathbb{Q}_2)
\ni g \mapsto e_2 g \in \mathbb{Q}_2^2$ is measure-preserving.
Equip $B_2 \backslash \SL_2(\mathbb{Q}_2)$ with the quotient Haar.
\subsection{Contour shift}
\label{sec-3-10}
We now shift to $\mathfrak{X}_u := \{\chi \in \mathfrak{X} :
\Re(\chi) = 1\}$,\footnote{
  The unitary axis is at $1$ rather than $1/2$
  because we are working with $\SL_2$ rather than $\PGL_2$
  Eisenstein series, the former being more natural for our
  purposes.
}
passing a pole at $\chi = |.|^2$ of residue
$\int_{g \in N_2 \backslash \SL_2(\mathbb{Q}_2)} \mathcal{F}
\phi(e_2, g)
= \int_{\mathbb{Q}_2^2} \mathcal{F} \phi$;
see \S\ref{sec-3-12} below for
details.
Because the function $1_{\mathbb{Z}_2^\times}$ is even,
we see by Fourier inversion
that $\mathcal{F} \phi(0,0) = \int_{\mathbb{Q}_2^2} \mathcal{F}
\phi = (1/2) \int_{x \in \mathbb{Q}_2} \phi(x,x) = 1/4$.
Thus $|\theta|^2(z) = 1/2 + \int_{\chi \in \mathfrak{X}^u}
\Eis^*(f_\chi)(z) \, d \chi$.
We integrate both sides against the constant function $1$ and
the function
$\Phi$ from \S\ref{sec-3-1},
using standard growth estimates on Eisenstein series
(see \S\ref{sec-3-13} below)
and the boundedness of $f_\chi$ (see \S\ref{sec-3-7})
and the rapid decay of $\Lambda(\chi,0)$
for bounded $C(\omega_\chi)$
to justify changing the order of integration.
We obtain
\begin{equation}\label{eq:theta-l2-norm}
  \|\theta \|^2 = 1/2
\end{equation}
and
\begin{equation}\label{eq:theta-expansion}
  |\theta|^2(z) = \langle |\theta|^2,1 \rangle
  + \int _{\chi \in \mathfrak{X}^u}
  \Eis^*(f_\chi)(z) \, d \chi
\end{equation}
and
\begin{equation}\label{eq:theta-expansion-2}
  \langle |\theta|^2, t(2^{-N}) \Phi  \rangle
  = \langle |\theta|^2,1 \rangle
  \langle 1, \Phi  \rangle
  + \int _{\chi \in \mathfrak{X}^u}
  \langle \Eis^*(f_\chi), t(2^{-N}) \Phi \rangle
  \, d \chi.
\end{equation}
To complete the proof of \eqref{eq:claimed-translate-estimate}
it suffices now to show for $A = 1.01$, so that
$\int_{\mathfrak{X}^u} C(\chi)^{-A} \, d \chi < \infty$,
that
\begin{equation}\label{eq:required-standard-growth-matrix-coef-bound}
  \langle \Eis^*(f_\chi), t(2^{-N}) \Phi \rangle
  \ll_{\Phi} N 2^{-N} C(\chi)^{-A}.
\end{equation}
This follows from standard growth and matrix coefficients
bounds;
see \S\ref{sec-3-18} for details.
\subsection{Residues}
\label{sec-3-12}
When $\chi = |.|^2$,
so that each $f \in \mathcal{I}(\chi)$ transforms on the left
under the modulus character of $B_2$,
the representation $\mathcal{I}(\chi)$ is reducible:
there is an $\SL_2(\mathbb{Q}_2)$-invariant map
$\mathcal{R} : \mathcal{I}(\chi) \rightarrow \mathbb{C}$
given by $\mathcal{R}(f) := \zeta_2(2) \int_{g \in B_2
  \backslash \SL_2(\mathbb{Q}_2)} f(i,g)$,
which we have normalized so that $\mathcal{R}(f^0_\chi) = 1$ for $f^0_\chi$ as in \S\ref{sec:example-eis-full-level}.
Let $f_\chi \in \mathcal{I}(\chi)$ vary in a bounded holomorphic family
in the strip $2 - \eps < \Re(\chi) < 2 + \eps$,
with
$f_\chi$ invariant by some open $U \leq \SL_2(\mathbb{Q}_2)$ independent of $\chi$;
these assumptions hold in the context of \S\ref{sec-3-7}--\S\ref{sec-3-10}.
Then $\Eis(f_\chi)$ is holomorphic away from a simple pole at $\chi = |.|^2$
with residue described by
\begin{equation}\label{eq:residue-of-eisenstein-intertwiner}
  \int_{\chi \in \mathfrak{X} : \Re(\chi) = 2 + \eps } \Eis^*(f_\chi) \, d \chi
  - 
  \int_{\chi \in \mathfrak{X} : \Re(\chi) = 2 - \eps } \Eis^*(f_\chi) \, d \chi
  =
  \mathcal{R}(f_{|.|^2}).
\end{equation}
For example, for $f_\chi = f_\chi^0$, this says that
\[
\res_{s \rightarrow 2} \xi(s) E_{s/2}(z)
= \res_{s \rightarrow 1} 2 \xi(2 s) E_{s}(z) = 1,
\]
as is well-known.
The general case follows either
by noting that both sides
of \eqref{eq:residue-of-eisenstein-intertwiner}
may be interpreted
as defining elements of the one-dimensional space
of equivariant functionals $\mathcal{I}(|.|^2) \rightarrow
\mathbb{C}$
or by applying the general treatment of \cite{MR546600}
to $\SL_2$ instead of $\PGL_2$.
\subsection{Growth bounds}
\label{sec-3-13}
Define $\height: \mathbb{H} \rightarrow \mathbb{C}$ by
$\height(z) := \sup_{\gamma \in \SL_2(\mathbb{Z})} \Im(\gamma
z)$.
It descends to $\height : \Gamma(2^n) \backslash \mathbb{H}
\rightarrow \mathbb{C}$.
Let $U$ be an open subgroup of $\SL_2(\mathbb{Q}_2)$.
Let $\chi \in \mathfrak{X}$ with $\Re(\chi) \geq 1$
and $\Re(\chi) \ll 1$.
Let $f \in \mathcal{I}(\chi)$ be  $U$-invariant.
Then
\begin{equation}\label{eq:lem-growth-bound}
  \Eis^*(f)(z) \ll_{U,A} C(\chi)^{-A}
  \height(z)^{\Re(\chi)/2} \log(3 + \height(z)) \|f\|.
\end{equation}
\begin{proof}
  We may assume $f \neq 0$.
  Then the $U$-invariance of $f$ implies
  $C(\chi_2) \ll_U 1$,
  hence that $\Lambda(\chi,0) \ll_A C(\chi)^{-A}$,
  so it suffices to show that
  $\Eis^*(f)(z) \ll_{U} C(\chi)^{O(1)}
  \height(z)^{\Re(\chi)/2} \log(3 + \height(z)) \|f\|$.
  For this, we estimate Fourier coefficients
  on a Siegel domain as in \cite[(4.12)]{michel-2009}
  and
  \cite[(3.23)]{michel-2009}.
\end{proof}
\subsection{Projections}
\label{sec-3-14}
For $\chi \in \mathfrak{X}^u$,
the norm
\[
\mathcal{I}(\chi) \ni f \mapsto \|f\|^2 :=
\int_{g \in B_2 \backslash \SL_2(\mathbb{Q}_2)} |f(i,g)|^2
\]
is $\SL_2(\mathbb{Q}_2)$-invariant.
By duality, for $\Phi \in
\mathcal{A}(2^\infty)$
satisfying \eqref{eq:postulated-growth-bound}
there is a unique $\Phi_\chi \in \mathcal{I}(\chi)$
so that $\langle \Eis(f), \Phi \rangle
= \langle f, \Phi_\chi  \rangle$ (the first inner product taken in $\mathcal{A}(2^{\infty})$, the second in $\mathcal{I}(\chi)$).
The maps $\Phi \mapsto \Phi_\chi$ are linear and equivariant
for $\Delta$ and $\SL_2(\mathbb{Q}_2)$.
\subsection{Plancherel theorem}
\label{sec-3-15}
We record for the sake of orientation that if $\Phi$ is
square-integrable,
then
$\|\Phi \|^2 = \|\Phi_{\disc}\|^2
+ (1/2) \int_{\mathfrak{X}^u} \|\Phi_\chi \|^2 \, d \chi$,
where $\Phi_{\disc}$ is the orthogonal projection
onto the discrete part of $\mathcal{A}(2^\infty)$
spanned by constants and cusp forms.
\subsection{Bounds for projections}
\label{sec-3-16}
For $\Phi \in \mathcal{A}(2^\infty)$ satisfying
\eqref{eq:postulated-growth-bound} and $A > 0$,
we claim that
$\Lambda(\chi,0) \|\Phi_\chi \| \ll_{\Phi,A} C(\chi)^{-A}$ for
all $\chi \in \mathfrak{X}^u$.  To see this, let $U$ be an open
subgroup of $\SL_2(\mathbb{Q}_2)$ that fixes $\Phi$.  Then
$f := \Phi_\chi \in \mathcal{I}(\chi)$ is $U$-invariant.  By
estimating the integral of $|\Eis^*(f)(z) \Phi(z)|$ over a
Siegel domain using \S\ref{sec-3-13}, it follows
that
  \[
  \Lambda(\chi,0) \langle f, \Phi_\chi  \rangle
  =
  \langle \Eis^*(f), \Phi  \rangle
  \ll_{\Phi,A} C(\chi)^{-A} \|f\|.
  \]
  Cancelling common factors of $\|\Phi_\chi\|$ (if nonzero)
  from $\langle f, \Phi_\chi  \rangle = \|\Phi_\chi\|^2$
  and $\|f\| = \|\Phi_\chi\|$,
  we conclude.
\subsection{Bounds for matrix coefficients}
\label{sec-3-17}
Let $\chi \in \mathfrak{X}^u$,
let $U$ be an open subgroup of $\SL_2(\mathbb{Z}_2)$,
let $f_1, f_2 \in \mathcal{I}(\chi)$ be $U$-invariant, and let
$n \in \mathbb{Z}$.
By explicating \cite{MR946351},
we obtain\footnote{
  This estimate may be understood as a
  general form of the well-known Ramanujan--type bound for
  the $2^{2 n}$th Hecke eigenvalues of unitary Eisenstein
  series on $\SL_2(\mathbb{Z})$.
}
\[|\langle f_1, t(2^n) f_2 \rangle| \leq [\SL_2(\mathbb{Z}_2) : U] (2 |n| + 1) 2^{-|n|}
  \|f_1\| \|f_2\|.\]
\subsection{Completion}
\label{sec-3-18}
We deduce the remaining estimate
\eqref{eq:required-standard-growth-matrix-coef-bound}
from \S\ref{sec-3-17} and \S\ref{sec-3-16}:
\begin{align*}
  \langle \Eis^*(f_\chi), t(2^{-N}) \Phi  \rangle
  &= \Lambda(\chi,0)
    \langle f_\chi, t(2^{-N}) \Phi_\chi  \rangle
    \\
  &\ll_{\Phi}
  N 2^{-N}
  \|f_\chi\|
    \Lambda(\chi,0) \|\Phi_\chi\|
    \\
  &\ll_{\Phi,A}
  N 2^{-N} C(\chi)^{-A}.
\end{align*}
\subsection{Refinements}\label{sec:remark-optimality}
We discuss heuristically
some possible refinements
of the above analysis
(cf. \S\ref{sec-optimal-error-term}),
leaving an actual implementation to the interested reader.
Recall that $(f_\chi)_\chi$ and $\Phi$ are independent of
$N$ and $U$-invariant for some fixed open
$U \leq \SL_2(\mathbb{Z}_2)$.  This subgroup $U$ is \emph{not}
the full maximal compact subgroup $\SL_2(\mathbb{Z}_2)$, but let
us pretend for the sake of illustration that it were.  Then
$f_\chi$ and $\Phi_\chi$ ``are'' spherical vectors.  For
$\chi \in \mathfrak{X}^u$, it follows that $f_\chi = 0$ unless
$\chi = |.|^{1 + 2 i t}$ for some $t \in \mathbb{R}$, in which
case we may write
$\langle \Eis^*(f_\chi), \Phi_\chi \rangle =: H(t)$ for some
Schwartz
function
$H : \mathbb{R} \rightarrow \mathbb{C}$.
By the Macdonald formula for the spherical matrix coefficients
of $\mathcal{I}(\chi)$,
the inner product
$\langle \Eis(f_\chi), t(2^{-N}) \Phi \rangle$ may be written
$2^{-N} H(t) (2^{N i t} + 2^{(N-2) i t} + \dotsb + 2^{- N i t})$
plus a similar term involving $N-2$ in place of $N$.
Thus $\langle  |\theta|^2, t(2^{-N}) \Phi \rangle
- \langle |\theta|^2, 1 \rangle \langle 1, \Phi  \rangle$
is morally
\[
  2^{-N}
  \int_{t \in \mathbb{R}}
  H(t) (2^{N i t} + 2^{(N-2) i t} + \dotsb + 2^{- N i t}) \, d t
  = 2^{-N}
  \sum _{\substack{
      -N \leq n \leq N : \\
      n \equiv N (2)
    }
  } \hat{H}(n)
\]
for
the normalized Fourier transform
$\hat{H}(\xi) := \int_{t \in \mathbb{R}} H(t) 2^{i \xi t} \, d
t$.  Since $\hat{H}$ decays rapidly, the error is thus
$O(2^{-N})$.
Similar arguments should apply
non-heuristically.

\section{Classical correlations\label{sec:classical-correlations}}
\label{sec-6}
In this section we record some local calculations relevant for
specializing the Rallis inner product formula.  The main result
is Proposition \ref{prop:classical-correlations}.  Our proof
uses Fourier analysis as in \S\ref{sec:fluctuations},
which seems more naturally suited to the task at hand;
it should also be possible
to argue geometrically as in \S\ref{sec:appendix-fluctuations}.
We retain throughout this section
the notation and setup of \S\ref{sec:appendix-fluctuations}.

\subsection{Fourier-analytic interpretation of some quadratic characters}\label{sec:four-analyt-interpr}
Recall the definition of $\chi_i$ from
\S\ref{sec:defn-ternary-thetas}.
We record here an
equivalent Fourier-analytic definition.
Recall
$\rho : R \rightarrow M_2(\mathbb{Z}/4)$
from the proof of Lemma \ref{lem:triples-agree}.
Recall that (up to permutation)
$\chi_1(\alpha), \chi_2(\alpha),\chi_3(\alpha)
  =
  (-1)^{b+c},
  (-1)^{a+c},
  (-1)^{a+b}$.
Let $e_1,e_2,e_3$ be the matrices
\[
  e_1 :=
  \begin{pmatrix}
    & 1 \\
    1 & 
  \end{pmatrix},
  \quad
  e_2 :=
  \begin{pmatrix}
    1 & 1 \\
    & -1
  \end{pmatrix},
  \quad
  e_3 :=
  \begin{pmatrix}
    1 &  \\
    1 & -1
  \end{pmatrix}.
\]
Their reductions modulo $2$ form a conjugacy class in
$\GL_2(\mathbb{F}_2)$.
For matrices $A, A_1, A_2$ over any commutative ring,
denote by $A \mapsto A^{\iota}$ the main anti-involution
(thus $A A^{\iota} = \det(A)$)
and by $\langle A_1, A_2 \rangle := \tr(A_1 A_2^{\iota})$
the trace pairing.
For $\alpha \in S^0$, the pairings
$\langle \rho(\alpha), e_i/2 \rangle \in \mathbb{Z}/2$ are then
well-defined,
and one has
\begin{equation}\label{eq:chi-fourier-interpretation}
  \chi_i(\alpha) =
  (-1) ^{ \langle \rho(\alpha), e_i/2 \rangle }.
\end{equation}

\subsection{Fourier transform and adjoint action}
Let $B := R \otimes_{\mathbb{Z}} \mathbb{Q}$
be the quaternion algebra generated by $R$.
Introduce the subscript $2$ to denote ``$2$-adic completion,''
so that $B_2 = B \otimes_{\mathbb{Q}} \mathbb{Q}_2$,
$R_2 = R \otimes_{\mathbb{Z}} \mathbb{Z}_2$,
$R_2^0 = R^0 \otimes_{\mathbb{Z}} \mathbb{Z}_2
= \{\alpha \in R_2 : \tr(\alpha) = 0\}$,
and so on.
Fix an identification
$B_2 = M_2(\mathbb{Q}_2)$
under which $R_2$ identifies with $M_2(\mathbb{Z}_2)$.
The characters $\chi_i : S^0 \rightarrow \{ \pm 1 \}$
extend by continuity
to $S_2^0$.
Extending them further by zero,
we obtain Schwartz--Bruhat functions $\chi_i : B_2^0 \rightarrow
\mathbb{C}$ as in \S\ref{sec:defn-ternary-thetas}.
For notational clarity, set $\mathfrak{o} := \mathbb{Z}_2,
\mathfrak{p} := 2 \mathbb{Z}_2$.
Fix an unramified character
$\psi : \mathbb{Q}_2 \rightarrow \mathbb{C}^{(1)}$,
i.e., one for which
$\{x \in \mathbb{Q}_2 : \psi(x \mathfrak{o}) \subseteq \{1\}\}
= \mathfrak{o}$.
Equip
$B_2$ with the measure $d \alpha$
with the property that the Fourier transform
$\mathfrak{F} \circlearrowright \mathcal{S}(B_2^0)$
defined by
\begin{equation}\label{eqn:fourier-B2}
  \mathfrak{F} \phi(\alpha) :=
  \int_{\alpha ' \in B_2}
  \phi(\alpha ')
  \psi(\langle \alpha, \alpha ' \rangle)
  \, d \alpha ',
\end{equation}
with
$\langle \alpha, \alpha ' \rangle$ the trace pairing as in \S\ref{sec:some-quadratic-characters},
satisfies
$\mathfrak{F} \mathfrak{F} \phi(\alpha) = \phi(-\alpha)$.
Define an inner product $\langle , \rangle_{L^2(B_2)}$
with respect to $d \alpha$.
Set $G := \PGL_2(\mathbb{Q}_2) = B_2^\times/\mathbb{Q}_2^\times$,
$K := \PGL_2(\mathbb{Z}_2) = R_2^\times/\mathbb{Z}_2^\times$.
For $X = B_2^0$ or $X = B_2$, the group $G$
acts on $X$,
hence on $\mathcal{S}(X)$,
by the adjoint action: for $g,\beta,\phi \in G,
X, \mathcal{S}(X)$
\begin{equation}\label{eq:adjoint-action}
  \Ad(g) \beta := g \beta g^{-1},
  \quad 
  \Ad(g) \phi(\beta) := \phi(\Ad(g)^{-1} \beta).
\end{equation}
Recall the Cartan decomposition
$G = \bigsqcup_{n \in \mathbb{Z}_{\geq 0}}
K a(2^n) K$
with
$a(y) := \diag(y,1)$.
For $g \in G$,
denote by
$\mathbf{n}(g)$ the integer $n \in \mathbb{Z}_{\geq
  0}$
for which 
$K a(2^n) K
= K g K$.
\subsection{Fourier-analytic calculations}
Define $\phi,\phi ',\phi '' \in
\mathcal{S}(B_2),\mathcal{S}(\mathbb{Q}_2),
\mathcal{S}(B_2^0)$
by
$\phi '(m) := 1_{\mathfrak{o}^\times}(m)$,
$\phi ''(\beta) := 2^{-3} \kappa_0 \sum_{i=1,2,3} \chi_i(4
\beta)$
and by $\phi(m+\beta) := \phi '(m) \phi ''(\beta)$
for $m,\beta \in \mathbb{Q}_2, B_2^0$.
Thus $\phi ''$
is
as in \S\ref{sec:defn-ternary-thetas}.
\begin{lemma}\label{lem:inner-product-involving-phi}
  Let $g \in G$.
  Set $n := \mathbf{n}(g)$.  Then
  \[\langle \Ad(g) \phi, \phi  \rangle_{L^2(B_2)} = 2^{-4} \kappa_0^2 (1_{n=0} 2 + 1) 2^{-n}.\]
\end{lemma}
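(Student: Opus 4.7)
The plan is to reduce the inner product to a character-sum calculation via the orthogonal decomposition $B_2 = \mathbb{Q}_2 \cdot 1 \oplus B_2^0$, and then to exploit symmetry. Since the trace pairing satisfies $\langle m, \beta\rangle = -m\,\tr(\beta) = 0$ for scalar $m$ and trace-zero $\beta$, and since $\Ad(g)$ preserves this decomposition while acting trivially on the scalar summand, choosing Haar measures $dm, d\beta$ on $\mathbb{Q}_2$ and $B_2^0$ with $d\alpha = c \cdot dm \otimes d\beta$ for some explicit constant $c$ yields the factorization
\[
  \langle \Ad(g)\phi,\phi\rangle_{L^2(B_2)}
  = c \cdot \|\phi'\|_{dm}^2 \cdot \langle \Ad(g)\phi'',\phi''\rangle_{L^2(d\beta)}.
\]
The scalar factor is a direct volume computation, so the main task is the trace-zero inner product.

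The next step is to show that $\phi''$ is $\Ad(K)$-invariant, which reduces the remaining computation to the Cartan case $g = a(2^n)$. This follows from \eqref{eq:chi-fourier-interpretation} combined with the observation that the mod-$2$ reductions of $e_1, e_2, e_3$ are the three transpositions of $\GL_2(\mathbb{F}_2) \cong S_3$ and form a single conjugacy class: the $\Ad(K)$-action therefore permutes $\{\chi_1, \chi_2, \chi_3\}$ transitively and fixes the symmetric combination defining $\phi''$.

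The main computation is then explicit. Parameterize $S_2^0$ by $(A,B,C) \leftrightarrow \bigl(\begin{smallmatrix}A & 2B \\ 2C & -A\end{smallmatrix}\bigr)$. The map $\Ad(a(2^n))^{-1}$ acts in these coordinates by $(A,B,C) \mapsto (A, 2^{-n}B, 2^n C)$, so the joint support of $\phi''$ and $\Ad(a(2^n))\phi''$ is cut out by $B \in 2^n \mathbb{Z}_2$ and has $d\beta$-volume $2^{-n} V_0$, where $V_0$ is the volume of the support of $\phi''$. Substituting $B = 2^n B'$, the integrand $\sum_{i,j}\chi_i(4\beta)\chi_j(4\Ad(a(2^n))^{-1}\beta)$ depends only on $(A, B', C) \bmod 2$. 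A direct check on the eight residue classes of $\mathbb{F}_2^3$ gives: at $n=0$ the integrand is $(\sum_i \chi_i)^2 \in \{9,1\}$ with average $3$; at $n\geq 1$ the shifts force $2^n B' \equiv 2^n C \equiv 0 \pmod{2}$, collapsing $\sum_i \chi_i(4\beta)$ to $(-1)^C + (-1)^{A+C} + (-1)^A$ and $\sum_j \chi_j(4\Ad(a(2^n))^{-1}\beta)$ to $(-1)^{B'} + (-1)^A + (-1)^{A+B'}$, whose product averages to $1$ (the $A$ odd classes contributing $1$ each, the $A$ even classes contributing $(1+2(-1)^C)(1+2(-1)^{B'})$ with average $1$). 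Multiplying by $V_n = 2^{-n}V_0$ and the prefactor $(2^{-3}\kappa_0)^2$ yields
\[
  \langle \Ad(a(2^n))\phi'',\phi''\rangle
  = 2^{-6}\kappa_0^2 \,(1_{n=0}\cdot 2 + 1)\, 2^{-n}\, V_0.
\]

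The last step is to reconcile constants: the product $c \cdot \|\phi'\|^2 \cdot V_0$ must equal $2^{2}$ for the stated normalization to emerge. This follows from the self-duality of $d\alpha$ under the trace pairing (which fixes $\mathrm{vol}(M_2(\mathbb{Z}_2)) = 1$), the lattice-index identity $[M_2(\mathbb{Z}_2) : \mathbb{Z}_2 \oplus \mathbb{Z}_2\langle h, e_{12}, e_{21}\rangle] = 2$, and the straightforward volume computations $\|\phi'\|^2 = \mathrm{vol}(\mathfrak{o}^\times)$ and $V_0 = \mathrm{vol}((1/4)S_2^0)$ in the chosen normalizations. The main obstacle is not the character arithmetic but this careful bookkeeping of measure normalizations induced by self-duality of $d\alpha$ for the trace pairing with $\psi$; once these are nailed down, the proof reduces to the eight-term finite check recorded above.
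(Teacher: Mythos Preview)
Your argument is correct and the character-sum computation checks out (I verified the averages $3$ at $n=0$ and $1$ at $n\geq 1$, and the constant $c\,\|\phi'\|^2\,V_0 = 2^2$ follows from $d\alpha(\mathbb{Z}_2\cdot 1 + (1/4)S_2^0) = 8$ together with $dm(\mathfrak{o}^\times)=1/2$). The route, however, differs from the paper's. The paper also factors through $B_2 = \mathbb{Q}_2 \oplus B_2^0$ and reduces to $g=a(2^n)$ via the same $\Ad(K)$-invariance argument, but then handles the trace-zero piece by applying the Fourier transform $\mathcal{F}''$ on $B_2^0$: using \eqref{eq:chi-fourier-interpretation} it computes $\mathcal{F}''\phi'' = 2\kappa_0\sum_i 1_{e_i+2R_2^0}$, invokes Plancherel (with the non-self-dual constant $2^{-1}$), and then evaluates the nine overlaps $\langle \Ad(g)1_{e_i+2R_2^0},1_{e_j+2R_2^0}\rangle$ as box-intersection volumes. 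For $n\geq 1$ exactly one pair $(i,j)=(2,3)$ survives. Your direct approach trades the Fourier machinery and Plancherel constant for an explicit eight-term average over $\mathbb{F}_2^3$; the paper's approach trades that finite check for the geometric transparency of disjoint shifted boxes. Both are short; the Fourier route makes the $n\geq 1$ answer visibly come from a single coset overlap, while yours keeps everything in physical space at the cost of a slightly more involved case analysis. Your final paragraph on measure normalization is terse but recoverable; the paper instead fixes explicit coordinates for $d\beta$ and $d\alpha$ from the outset and carries them through, which avoids the need for a separate reconciliation step.
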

\begin{proof}
  Observe first that,
  since $e_1,e_2,e_3$ form a conjugacy class mod $2$,\footnote{alternatively,
    since the vertices $v_1,v_2,v_3$
    in the definition of the characters $\chi_i$
    are permuted under tree automorphisms
    fixing the origin}
  the function
  $\phi''$ and hence also $\phi$ is
  $\Ad(K)$-invariant.
  We thereby reduce to the case
  $g = a(2^n)$.
  Equip $\mathbb{Q}_2$ with the Haar measure
  $d a$ assigning volume one to $\mathfrak{o}$.
  Equip $B_2^0$ with the Haar measure
  $d \beta$ so that for $f \in C_c(B_2^0)$,
  \[
    \int_{\beta \in B_2^0} f(\beta) \, d \beta
    :=
    \int_{a,b,c \in \mathbb{Q}_2}
    f
    (
    \begin{pmatrix}
      a & b \\
      c & -a
    \end{pmatrix}
    )
    \, d a \, d b \, d c.
  \]
  The Haar measure $d \alpha$
  on $B_2$ is then given for $f \in C_c(B_2)$
  by
  \[
    \int_{\alpha \in B_2} f(\alpha) \, d \alpha
    =
    \int_{t,\beta  \in \mathbb{Q}_2,B_2^0}
    f(t/2+\beta) \, d t \, d \beta
  \]
  because we may compute that with this normalization,
  $\mathfrak{F} 1_{M_2(\mathbb{Z}_2)}
  = 1_{M_2(\mathbb{Z}_2)}$.
  Thus
  $\langle \Ad(g) \phi, \phi \rangle_{L^2(B_2)}
  =
  I' \cdot I''(g)$
  with
  \[
    I'
    :=
    \int_{t \in \mathbb{Q}_2}
    |\phi '(t/2)|^2 \, d t
    =
    \int_{t \in \mathbb{Q}_2}
    1_{\mathfrak{o}^\times}(t/2)  \, d t
    =
    2^{-2}
  \]
  and
  $I''(g)
  :=
  \int_{\beta \in B_2^0}
  \langle \Ad(g) \phi '', \phi '' \rangle$.
  We must verify that
  $I''(g) = 2^{-2} 3\kappa_0^2$ for $n = 0$
  and $I''(g) = 2^{-2-n} \kappa_0^2$ for $n > 0$.
  We invoke the Fourier transform $\mathcal{F} ''$
  on $\mathcal{S}(B_2^0)$,
  defined by analogy to \eqref{eqn:fourier-B2} using $d \beta$.
  The measure $d \beta$
  is not self-dual for $\mathfrak{F} ''$
  but instead satisfies
  \begin{equation}\label{eq:plancherel-formula-B0}
    \|f\|^2 = 2^{-1} \|\mathcal{F}'' f\|^2 \quad \text{ for $f \in L^2(B_2^0)$} 
  \end{equation}
  as one verifies by taking (say) $f :=
  1_{R_2^0}$
  and evaluating $\|f\|^2 = 1, \mathfrak{F} f = 1_{2^{-1} S_2^0}, \|\mathfrak{F} f\|^2 = 2$.
  By \eqref{eq:chi-fourier-interpretation}, we have
  $\chi_i(4 \beta)
  = 1_{S^0}(4 \beta) \psi( \langle \beta, e_i \rangle)$.
  We compute
  \[
    \int_{\beta ' \in B_2^0} 1_{S^0}(4 \beta)
    \psi ( \langle \beta, \beta ' \rangle)
    = 2^4
    1_{2 R^0}(\beta)
  \]
  by expanding the LHS
  for
  $\beta = \begin{pmatrix}
    a & b \\
    c & -a
  \end{pmatrix}$
  to
  \[
    \int_{a',b',c' \in \mathbb{Q}_2}
    1_{\mathfrak{o}}(4 a')
    1_{2 \mathfrak{o}}(4 b')
    1_{2 \mathfrak{o}}(4 c')
    \psi(2 a a' + b b' + c c')
    \, d a'
    \, d b'
    \, d c'
  \]
  and evaluating the latter to
  $2^4
  1_{2 \mathfrak{o}}(a)
  1_{2 \mathfrak{o}}(b)
  1_{2 \mathfrak{o}}(c) = 2^4 1_{2 R^0}(\beta)$.
  Thus
  \[\mathcal{F} '' \phi ''
    = 2^{-3} \kappa_0 \sum_{i=1,2,3}
    2^4 1_{e_i + 2 R_2^0}
    =
    2 \kappa_0 \sum_{i=1,2,3} 1_{e_i + 2 R_2^0}.\]
  By \eqref{eq:plancherel-formula-B0}
  and the commutation
  $\Ad(g) \mathfrak{F} '' \phi ''
  = \mathfrak{F} '' \Ad(g) \phi ''$,
  \[
    I''(g) = 2^{-1}  \int_{\beta \in B_2^0}
    \langle \Ad(g) \mathfrak{F} '' \phi '', \mathfrak{F} '' \phi
    '' \rangle
    = 2 \kappa_0^2
    \sum_{i,j=1,2,3}
    \langle \Ad(g) 1_{e_i + 2 R_2^0}, 1_{e_j + 2 R_2^0} \rangle.
  \]
  For     $\beta = \begin{pmatrix}
    a & b \\
    c & -a
  \end{pmatrix}$, we see from the formulas
  \begin{align*}
    1_{e_1 + 2 R_2^0}(\beta)
    &= 1_{\mathfrak{p}}(a)
      1_{\mathfrak{o}^\times}(b)
      1_{\mathfrak{o}^\times}(c),
    \\
    1_{e_2 + 2 R_2^0}(\beta)
    &=
      1_{\mathfrak{o}^\times}(a)      1_{\mathfrak{o}^\times}(b)
      1_{\mathfrak{p}}(c),
    \\
    1_{e_3 + 2 R_2^0}(\beta)
    &=
      1_{\mathfrak{o}^\times}(a)
      1_{\mathfrak{p}}(b)
      1_{\mathfrak{o}^\times}(c).
  \end{align*}
  that the sets $e_i + 2 R_2^0$ ($i=1,2,3$) are disjoint
  and that their $d \beta$-volumes are $2^{-3}$.
  If $n = 0$, so that $g$ is the identity,
  it follows that
  \[
    I''(g)
    =
    2 \kappa_0^2 \sum_{i=1,2,3} \|1_{e_i + 2 R_2^0}\|^2
    = 2 \kappa_0^2 \cdot 3 \cdot 2^{-3}
    = 2^{-2} 3 \kappa_0^2.
  \]
  Suppose $n \geq 1$.
  Write $\varpi := 2 \hookrightarrow \mathfrak{p}$.
  We have
  $\Ad(g) 1_{e_i + 2 R^0}
  = 1_{\Ad(g)(e_i + 2 R_2^0)}$ and so
  \begin{align*}
    \Ad(g) 1_{e_1 + 2 R_2^0}(\beta)
    &= 1_{\mathfrak{p}}(a)
      1_{\varpi^n \mathfrak{o}^\times}(b)
      1_{\varpi^{-n} \mathfrak{o}^\times}(c),
    \\
    \Ad(g) 1_{e_2 + 2 R_2^0}(\beta)
    &=
      1_{\mathfrak{o}^\times}(a)      1_{\varpi^{n} \mathfrak{o}^\times}(b)
      1_{\varpi^{-n} \mathfrak{p}}(c),
    \\
    \Ad(g) 1_{e_3 + 2 R_2^0}(\beta)
    &=
      1_{\mathfrak{o}^\times}(a)
      1_{\varpi^n \mathfrak{p}}(b)
      1_{\varpi^{-n} \mathfrak{o}^\times}(c).
  \end{align*}
  Thus
  $\langle \Ad(g) 1_{e_i + 2 R_2^0}, 1_{e_j + 2 R_2^0} \rangle =
  1_{(i,j)=(2,3)} \int_{a,b,c \in k}
  1_{\mathfrak{o}^\times}(a)
  1_{\varpi^n \mathfrak{o}^\times}(b)
  1_{\mathfrak{o}^\times}(c) \, d a \, d b \, d c
  =
  1_{(i,j)=(2,3)} 2^{-3-n}$
  and so
  $I''(g) = 2 \kappa_0^2 \cdot 2^{-3-n} = 2^{-2-n} \kappa_0^2$, as
  required.
\end{proof}

Now equip $G$ with the Haar measure
assigning volume one to $K$.
For $k=1,2$,
denote by $\Xi_k : K \backslash G / K \rightarrow
\mathbf{C}$
be the normalized ($\Xi_k(1) = 1$)
spherical matrix coefficient attached to $\Psi_k$
(see Lemma \ref{lem:rallis-ip-local-unramified-integral}).
\begin{lemma}\label{lem:local-spherical-unram-integral-dummy}
  The integral
  $I_0 := \int_{g \in G}
  \langle \Ad(g) 1_{M_2(\mathbb{Z}_2)}, 1_{M_2(\mathbb{Z}_2)}  \rangle_{L^2(B_2)}
  \Xi_k(g)$
  converges absolutely
  and is given by
  $I_0 = L_2(\Psi_k,\tfrac{1}{2})/\zeta_2(2)$.
\end{lemma}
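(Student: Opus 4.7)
The plan is to use the bi-$K$-invariance of the integrand to collapse $I_0$ to a one-variable sum over the Cartan parameter, evaluate the two combinatorial factors explicitly, and recognize the resulting series as the claimed $L$-ratio via Macdonald's formula.

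Since $K = R_2^\times/\mathbb{Z}_2^\times$ normalizes $R_2 = M_2(\mathbb{Z}_2)$, both $\Ad(g) 1_{R_2}$ (as a function of $g$) and $\Xi_k$ are bi-$K$-invariant. Applying the Cartan decomposition $G = \bigsqcup_{n \geq 0} K a(2^n) K$ gives
\[
I_0 = \sum_{n \geq 0} \vol(K a(2^n) K) \cdot v_n \cdot \Xi_k(a(2^n)), \qquad v_n := \langle \Ad(a(2^n)) 1_{R_2}, 1_{R_2}\rangle_{L^2(B_2)}.
\]
A direct matrix calculation yields $a(2^n) R_2 a(2^n)^{-1} \cap R_2 = \bigl(\begin{smallmatrix}\mathbb{Z}_2 & 2^n\mathbb{Z}_2 \\ \mathbb{Z}_2 & \mathbb{Z}_2\end{smallmatrix}\bigr)$, so in the self-dual Haar measure $d\alpha$ (for which $\vol(M_2(\mathbb{Z}_2)) = 1$, as established in the proof of Lemma~\ref{lem:inner-product-involving-phi}) one has $v_n = 2^{-n}$. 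Combined with the standard Bruhat--Tits sphere count $\vol(K a(2^n) K) = 3 \cdot 2^{n-1}$ for $n \geq 1$ and $\vol(K) = 1$, this telescopes to
\[
I_0 = 1 + (1 + 2^{-1}) \sum_{n \geq 1} \Xi_k(a(2^n)).
\]

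To evaluate the sum, apply Macdonald's formula. Since $\Psi_k$ has trivial central character, its Satake parameters are reciprocal, $\alpha_1 \alpha_2 = 1$. Setting $x_i := \alpha_i\, 2^{-1/2}$ and $c_i := (1 - 2^{-1}\alpha_i^{-2})/(1 - \alpha_i^{-2})$, Macdonald gives $(1 + 2^{-1}) \Xi_k(a(2^n)) = c_1 x_1^n + c_2 x_2^n$, so the geometric sum yields
\[
I_0 = 1 + \frac{c_1 x_1}{1 - x_1} + \frac{c_2 x_2}{1 - x_2}.
\]
Multiplying by $(1 - x_1)(1 - x_2)$ and using $x_1 x_2 = 2^{-1}$ together with the two identities $c_1 + c_2 = 1 + 2^{-1}$ (which follows since $\Xi_k(1) = 1$) and $x_1(c_1 - 1) + x_2(c_2 - 1) = 0$ (immediate from $\alpha_1 \alpha_2 = 1$, which forces $\alpha_1/(\alpha_1^2 - 1) + \alpha_2/(\alpha_2^2 - 1) = 0$), the expression collapses to $I_0(1 - x_1)(1 - x_2) = 1 - 2^{-2}$. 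Since $L_2(\Psi_k, \tfrac{1}{2}) = [(1-x_1)(1-x_2)]^{-1}$ and $\zeta_2(2) = (1 - 2^{-2})^{-1}$, this is exactly the claimed value.

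Absolute convergence of the defining integral for $I_0$ is automatic from the Ramanujan bound $|\alpha_i| = 1$ for $\Psi_k$ (via Jacquet--Langlands to weight $2$ newforms on $\Gamma_0(23)$ and Eichler--Shimura/Deligne), which yields $|\Xi_k(a(2^n))| \ll (n+1) 2^{-n/2}$. The only substantive step is thus the final algebraic simplification, but the two reciprocal-parameter identities make it mechanical; no deeper representation-theoretic input is required beyond Macdonald's formula.
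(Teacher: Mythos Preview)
Your proof is correct and follows precisely the approach the paper indicates (``A direct proof follows from the Cartan decomposition as in \S\ref{sec:classical-correlations} and the Macdonald formula''), supplying the routine details that the paper explicitly omits. Your verification of the two reciprocal-parameter identities and the resulting collapse to $(1-2^{-2})/[(1-x_1)(1-x_2)]$ is clean and complete.
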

\begin{proof}
  By direct calculation;
  see Lemma \ref{lem:rallis-ip-local-unramified-integral} below.
\end{proof}

\begin{proposition}
  \label{prop:classical-correlations}
  For $\phi$ as in Proposition \ref{lem:inner-product-involving-phi},
  the integral
  \[I := \int_{g \in G}
    \langle \Ad(g) \phi, \phi   \rangle_{L^2(B_2)}
    \Xi_k(g)\]
  converges absolutely and is given by
  \[
    I
    =
    2^{-4} \kappa_0^2
    \left(
      2 + 
      \frac{L_2(\Psi_k,\tfrac{1}{2})}{\zeta_2(2)}
    \right).
  \]
\end{proposition}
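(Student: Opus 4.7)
The plan is to reduce this proposition to the already computed unramified integral $I_0$ of Lemma \ref{lem:local-spherical-unram-integral-dummy} by writing the matrix coefficient $g \mapsto \langle \Ad(g) \phi, \phi\rangle_{L^2(B_2)}$ as a tidy linear combination of $g \mapsto \langle \Ad(g) 1_{M_2(\mathbb{Z}_2)}, 1_{M_2(\mathbb{Z}_2)}\rangle_{L^2(B_2)}$ and the characteristic function $1_K$. Once that linear combination is in hand, integrating against $\Xi_k$ is immediate: the first piece yields $I_0 = L_2(\Psi_k,\tfrac{1}{2})/\zeta_2(2)$, while the second yields $2 \cdot \Xi_k(1) \cdot \vol(K) = 2$, matching the right-hand side exactly.

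To establish the key decomposition, I would first observe that both functions $g \mapsto \langle \Ad(g) \phi, \phi\rangle$ and $g \mapsto \langle \Ad(g) 1_{M_2(\mathbb{Z}_2)}, 1_{M_2(\mathbb{Z}_2)}\rangle$ are bi-$K$-invariant: the former because $\phi$ itself is $\Ad(K)$-invariant (used in Lemma \ref{lem:inner-product-involving-phi}), and the latter because $1_{M_2(\mathbb{Z}_2)}$ is $\Ad(K)$-invariant. It therefore suffices by the Cartan decomposition to compare their values on representatives $g = a(2^n)$ with $n \geq 0$. A short direct calculation in the self-dual Haar measure on $B_2$ (under which $\vol(M_2(\mathbb{Z}_2)) = 1$) shows
\[
\langle \Ad(a(2^n)) 1_{M_2(\mathbb{Z}_2)}, 1_{M_2(\mathbb{Z}_2)}\rangle_{L^2(B_2)}
= \vol\bigl(M_2(\mathbb{Z}_2) \cap a(2^n) M_2(\mathbb{Z}_2) a(2^n)^{-1}\bigr)
= 2^{-n}.
\]
Comparing with Lemma \ref{lem:inner-product-involving-phi}, which gives $2^{-4} \kappa_0^2 \cdot 3$ at $n=0$ and $2^{-4} \kappa_0^2 \cdot 2^{-n}$ for $n \geq 1$, one checks on both cases that
\[
\langle \Ad(g) \phi, \phi\rangle_{L^2(B_2)}
= 2^{-4} \kappa_0^2 \bigl( \langle \Ad(g) 1_{M_2(\mathbb{Z}_2)}, 1_{M_2(\mathbb{Z}_2)}\rangle_{L^2(B_2)} + 2 \cdot 1_K(g) \bigr).
\]

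Integrating termwise against $\Xi_k$, the first term yields $2^{-4}\kappa_0^2 I_0 = 2^{-4}\kappa_0^2 \cdot L_2(\Psi_k,\tfrac{1}{2})/\zeta_2(2)$ by Lemma \ref{lem:local-spherical-unram-integral-dummy}, while the second yields $2 \cdot 2^{-4} \kappa_0^2 \int_K \Xi_k(g)\,dg = 2 \cdot 2^{-4}\kappa_0^2$ using $\Xi_k(1) = 1$, bi-$K$-invariance, and the normalization $\vol(K) = 1$. Adding these gives the claimed formula, and absolute convergence follows since absolute convergence of $I_0$ is part of Lemma \ref{lem:local-spherical-unram-integral-dummy} and the $1_K$-piece is compactly supported.

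The only mildly subtle point is the bookkeeping of the measure $d\alpha$ and the verification of $\langle \Ad(a(2^n)) 1_{M_2(\mathbb{Z}_2)}, 1_{M_2(\mathbb{Z}_2)}\rangle_{L^2(B_2)} = 2^{-n}$; all other steps are essentially formal, so there is no serious obstacle beyond consistently tracking normalizations that have already been fixed in the proof of Lemma \ref{lem:inner-product-involving-phi}.
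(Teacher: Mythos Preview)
Your proposal is correct and follows essentially the same approach as the paper: both arguments use the Cartan decomposition, compute $\langle \Ad(a(2^n)) 1_{M_2(\mathbb{Z}_2)}, 1_{M_2(\mathbb{Z}_2)}\rangle_{L^2(B_2)} = 2^{-n}$, invoke Lemma~\ref{lem:inner-product-involving-phi} to obtain the decomposition $\langle \Ad(g) \phi, \phi\rangle = 2^{-4}\kappa_0^2(1_{n=0}\cdot 2 + \langle \Ad(g) 1_{M_2(\mathbb{Z}_2)}, 1_{M_2(\mathbb{Z}_2)}\rangle)$, and then integrate termwise against $\Xi_k$ using Lemma~\ref{lem:local-spherical-unram-integral-dummy} and $\vol(K)=1$. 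Your write-up is slightly more detailed (explicitly noting bi-$K$-invariance and the absolute convergence argument), but the substance is identical.
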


\begin{proof}
  We use the Cartan decomposition,
  writing $G = K \bigsqcup (\sqcup_{n > 0} K a(2^n) K)$.
  For $g = a(2^n)$,
  one has
  $\langle \Ad(g) 1_{M_2(\mathbb{Z}_2)}, 1_{M_2(\mathbb{Z}_2)}
  \rangle_{L^2(B_2)}
  = 2^{-n}$
  and thus
  $\langle \Ad(g) \phi, \phi  \rangle
  = 2^{-4} \kappa_0^2 (1_{n=0} 2 + 
  \langle \Ad(g) 1_{M_2(\mathbb{Z}_2)}, 1_{M_2(\mathbb{Z}_2)}
  \rangle_{L^2(B_2)} )$.
  Since $\vol(K) = 1$,
  we conclude by Lemmas \ref{lem:inner-product-involving-phi} and \ref{lem:local-spherical-unram-integral-dummy}.
\end{proof}

\section{Rallis inner product formula}
\label{sec-7}
The main purpose of this section is to prove the identity
\eqref{eqn:sketch-4-precise},
which is needed to
determine the diagonal entries of the limiting
quantum variance $V_\infty$ in Theorem \ref{thm:special-main}.
The reader who is satisfied with a less precise result in which
the diagonal matrix
$V_\infty$ is left unspecified may skip this
section; on the other hand, the proportionality constants in the
quantum variance problem are of basic interest,
so it would be disappointing if our method were
incapable of determining them.

To prove \eqref{eqn:sketch-4-precise}, we specialize the Rallis
inner product formula from its general adelic formulation and
evaluate some carefully normalized local integrals.  The
  contents of this section are, in principal, straightforward,
  and should not be confused with representing a primary novelty
  of this paper.  On the other hand, some careful book-keeping
is required, and we are not aware of any previous work in which
such a specialization was carried out.  Indeed, although many
classical cases of the Rallis inner product formula are recorded
in the literature (see e.g. \cite[p285]{MR1147818},
\cite[p55]{MR1292796}, \cite[\S6]{MR2504745} and its references,
\cite{MR2604368}, \cite[p969]{MR2776068}), none seem to apply
``off-the-shelf'' to the theta functions $h_1, h_2$ considered
here.  Their methods could likely be adapted, but we have chosen
instead to derive a ``ready-to-use'' specialization of the
general adelic form
whose application here
reduces the proof of
\eqref{eqn:sketch-4-precise}
to the
evaluation of the $2$-adic local integral considered in
\S\ref{sec:classical-correlations}.
We work in generality here
so as not to duplicate effort in future
applications of the method.



\subsection{Local preliminaries}
\label{sec-7-1}
Let $k$ be a local field of characteristic $\neq 2$.
Denote by $\Mp_2(k)$ the metaplectic
double cover of $\SL_2(k)$,
realized as a set of pairs $(\sigma,\zeta) \in \SL_2(k) \times
\{\pm 1\}$
with the multiplication law
$(\sigma_1,\zeta_1) (\sigma_2,\zeta_2)
=(\sigma_1,\sigma_2, \zeta_1 \zeta_2 c(\sigma_1,\sigma_2))$
for the Kubota cocycle
$c : k^\times/k^{\times 2} \times  k^\times/k^{\times 2}
\rightarrow \{\pm 1\}$;
see e.g. \cite[\S4]{nelson-theta-squared} for details on this and what follows.
As generators for $\Mp_2(k)$ we take
for $a,b,\zeta \in k^\times, k, \{\pm 1\}$ the elements
\[
  n(b) :=
  (\begin{pmatrix}
    1 & b \\
    & 1
  \end{pmatrix}, 1), \quad
  t(a) :=
  (\begin{pmatrix}
    a &  \\
    & a^{-1}
  \end{pmatrix}, 1),
  \quad
  w := (\begin{pmatrix}
    & 1 \\
    -1 & 
  \end{pmatrix}, 1) \]
and $\eps(\zeta) := (1,\zeta)$.
Let $\psi : k \rightarrow \mathbb{C}^{(1)}$ be a nontrivial
character.
Let $(V,q)$ be a quadratic space over $k$.
For $x,y \in V$,
set
$\langle x,y \rangle := q(x+y) - q(x) - q(y)$.
Equip $V$ with the Haar measure $d x$
self-dual for the Fourier transform $\mathcal{F} \phi(y) :=
\int_V
\phi(x) \psi (\langle x,y \rangle) \, d x$
for $\phi$ in the Schwartz--Bruhat space $\mathcal{S}(V)$.
The Weil representation
$\omega_{V,\psi}$ of $\Mp_2(k)$ on $\mathcal{S}(V)$,
abbreviated $\omega := \omega_{V,\psi}$
when $V,\psi$ are clear from context,
is defined on the generators:
there is a quartic character $\chi = \chi_{V,\psi}$ of $k^\times$
and an eighth root of unity $\gamma = \gamma_{V,\psi}$
(see \cite{MR0165033,MR0424695})
so that
$\omega(n(b)) \phi(x) = \psi(b q(x)) \phi(x)$,
$\omega(t(a)) \phi(x) = \chi(a) |a|^{d/2} \phi(a x)$,
$\omega(w) \phi = \gamma \mathcal{F} \phi$,
and $\omega(\eps(\zeta)) \phi = \zeta^{\dim V} \phi$.
The assignment $V \mapsto \omega_{V,\psi}$
is compatible with orthogonal direct sums:
if $V = V' \oplus V''$,
then $\omega_{V,\psi} = \omega_{V',\psi} \otimes
\omega_{V'',\psi}$
wrt the dense inclusion
$\mathcal{S}(V') \otimes \mathcal{S}(V'') \hookrightarrow
\mathcal{S}(V)$.
\subsection{Global preliminaries}
\label{sec-7-2}
Now let $k$ be a number field with adele
ring $\mathbb{A}$.
Denote by $v$ a typical place of $k$.
Denote by $k_v$ the completion.
For an algebraic group $H$ over $k$,
set $H_v := H(k_v)$.
Fix a nontrivial character
$\psi : \mathbb{A}/k \rightarrow \mathbb{C}^{(1)}$.
Let $(V,q)$ be a quadratic space over $k$.
The metaplectic group $\Mp_2(\mathbb{A})$ consists of pairs
$(\sigma,\zeta) \in \SL_2(\mathbb{A}) \times \{\pm 1\}$
with the law $(\sigma_1,\zeta_1) (\sigma_2,\zeta_2)
=(\sigma_1,\sigma_2, \zeta_1 \zeta_2 c(\sigma_1,\sigma_2))$
for the product $c = \prod c_v$
of the local cocycles.
The map $\gamma \mapsto (\gamma,1)$ defines
a splitting $\SL_2(k) \hookrightarrow \Mp_2(\mathbb{A})$.
Equip $V(\mathbb{A})$ with the product
of the local Haar measures on the completions $V(k_v)$.
The space
$\mathcal{S}(V(\mathbb{A}))$
is a restricted tensor product
of the local spaces
$\mathcal{S}(V(k_v))$;
we assume here that the distinguished elements
defining the restricted tensor product
are unit vectors for almost all places.
The Weil representation
$\omega_{\psi,V}$ of $\Mp_2(\mathbb{A})$ on
$\mathcal{S}(V(\mathbb{A}))$
is the restricted tensor product of the local Weil
representations.
\subsection{The main global result}
\label{sec-7-3}
Let $B$ be a quaternion algebra over $k$.  Denote by $B^0$ the
trace zero subgroup and by $G$ the algebraic group with
$G(k) = B^\times / k^\times$.  Set
$[\Mp_2] := \SL_2(k) \backslash \Mp_2(\mathbb{A}), [\SL_2] :=
\SL_2(k) \backslash \SL_2(\mathbb{A})$,
$[G] := G(k) \backslash G(\mathbb{A})$.  Then
$(k, x \mapsto x^2)$, $(B^0, x \mapsto \nr(x))$ and
$(B, x \mapsto \nr(x))$ are quadratic spaces with
$B = k \oplus B^0$.
The above discussion applies,
giving Weil representations
of $\Mp_2(\mathbb{A})$
on $\mathcal{S}(\mathbb{A}), \mathcal{S}(B_\mathbb{A}^0),
\mathcal{S}(B_\mathbb{A})$
which
we denote by $\omega$ for notational simplicity.
The adjoint action $\Ad : G \rightarrow \SO(B^0)$
defined as in \eqref{eq:adjoint-action}
is an isomorphism.
Let $\pi \subset L^2([G])$
be a cuspidal automorphic representation.
Fix a unitary factorization
$\pi \cong \otimes \pi_v$.
Equip $G_v$ with arbitrary
Haar measures;
assume only that for non-archimedean $v$ outside some finite
set,
the maximal compact in $G_v$
has volume one.
Equip $G(\mathbb{A})$ with the restricted product
measure and $[G]$ the quotient measure.\footnote{
  It might appear natural to consider
  here some specific measure on $G(\mathbb{A})$,
  such as the Tamagawa measure,
  but the main identities
  to be considered are self-normalizing
  for the choice of Haar measure on $G(\mathbb{A})$, so we do not.
}
For Schwartz functions $\phi ', \phi '' \in
\mathcal{S}(\mathbb{A}),
\mathcal{S}(B^0(\mathbb{A}))$
and $\Psi \in \pi$,
define for $\sigma \in [\Mp_2]$ and $g \in [G]$
the adelic theta functions
$\theta_{\phi '}(\sigma) := \sum_{m \in k} \omega(\sigma) \phi '(m)$,
$\theta_{\phi ''}(\sigma,g) := \sum_{\beta \in B^0} \omega(\sigma) \Ad(g) \phi '(m)$,
$\theta_{\phi '',\Psi}(\sigma) := \int_{g \in [G]}
\Psi(g) \theta_{\phi ''}(\sigma,g)$.
Denote simply by $\int$ an integral over $[\SL_2]$
with respect to the Tamagawa measure,
which is the probability Haar.
\begin{theorem}\label{thm:quaternary-rallis-adelic}
  For $i=1,2$,
  let
  $\phi'_i = \otimes \phi_{i,v}' \in \mathcal{S}(\mathbb{A})^{(+)}$,
  $\phi''_i = \otimes \phi_{i,v}'' \in \mathcal{S}(B^0(\mathbb{A}))$
  and
  $\Psi_i = \otimes \Psi_{i,v} \in \pi$.
  For a sufficiently large finite set $S$ of places,
  \[
  \int (\theta_{\phi_1'', \Psi_1} \overline{\theta_{\phi_2'',
    \Psi_2}})
  \int (\theta_{\phi_1'} \overline{\theta_{\phi_2'}})
  = 
  2
  \frac{L^{(S)}(\pi,\tfrac{1}{2})}{\zeta_k^{(S)}(2)}
  \prod_{v \in S}
  I_v
  \]
  where
  $I_v := 
  \int_{g \in G_v}
  \langle \pi_v(g) \Psi_{1,v}, \Psi_{2,v} \rangle_{\pi_v}
  \langle \Ad(g) \phi_{1,v}, \phi_{2,v} \rangle_{L^2(B_v)}$
  with $\phi_{i,v} := \phi_{i,v}' \otimes \phi_{i,v}'' \in \mathcal{S}(B_v)$.
\end{theorem}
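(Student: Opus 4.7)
The plan is to reduce the identity to two independent classical pieces that arise from the natural factorization of the left-hand side under the seesaw diagram \eqref{eqn:seesaw}. The key algebraic observation is that $B = k \oplus B^0$ is an orthogonal decomposition of quadratic spaces, so that the Weil representation factors as $\omega_{B,\psi} = \omega_{k,\psi} \otimes \omega_{B^0,\psi}$ under the dense inclusion $\mathcal{S}(k_v) \otimes \mathcal{S}(B_v^0) \hookrightarrow \mathcal{S}(B_v)$; moreover, the adjoint action of $G$ on $B$ fixes $k$ pointwise and acts on $B^0$ via the isomorphism $\Ad : G \xrightarrow{\sim} \SO(B^0)$. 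Consequently the left-hand side is already a product of two $[\SL_2]$-inner products, and I would handle each in turn.

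For the first factor, I would invoke the general adelic Rallis inner product formula for the dual pair $(\Mp_2, \O(B^0))$ and the cuspidal representation $\pi$ of $G(\mathbb{A})$, identified via $\Ad$ with a cuspidal representation of $\SO(B^0)(\mathbb{A})$. In the explicit form worked out in \cite{2012arXiv1207.4709T, MR2837015}, this gives
\[
\int \theta_{\phi_1'', \Psi_1} \overline{\theta_{\phi_2'', \Psi_2}} = c'' \cdot \frac{L^{(S)}(\pi, \tfrac{1}{2})}{\zeta_k^{(S)}(2)} \prod_{v \in S} I_v'',
\]
where $I_v'' := \int_{G_v} \langle \pi_v(g) \Psi_{1,v}, \Psi_{2,v}\rangle_{\pi_v}\, \langle \Ad(g)\phi_{1,v}'', \phi_{2,v}''\rangle_{L^2(B_v^0)} \, dg$ and $c''$ is an absolute normalization constant to be pinned down.

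For the second factor, the relevant dual pair is $(\Mp_2, \O(1))$ with trivial target. One can either invoke the Siegel--Weil formula, which identifies $\theta_{\phi'}$ with a degenerate Eisenstein series on $\Mp_2$, or argue directly by Poisson summation in the style of \S\ref{sec:analytic-input}; in either case $\int \theta_{\phi_1'} \overline{\theta_{\phi_2'}}$ unfolds to a product of purely local inner products, yielding a formula of the shape $\int \theta_{\phi_1'} \overline{\theta_{\phi_2'}} = c' \prod_v \langle \phi_{1,v}', \phi_{2,v}'\rangle_{L^2(k_v)}$. Convergence and the absence of singular contributions are ensured by the hypothesis $\phi_i' \in \mathcal{S}(\mathbb{A})^{(+)}$.

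To assemble, multiply the two evaluations and use the factorization
\[
\langle \Ad(g)\phi_{1,v}, \phi_{2,v}\rangle_{L^2(B_v)} = \langle \phi_{1,v}', \phi_{2,v}'\rangle_{L^2(k_v)} \cdot \langle \Ad(g)\phi_{1,v}'', \phi_{2,v}''\rangle_{L^2(B_v^0)},
\]
a consequence of $B_v = k_v \oplus B_v^0$ and the triviality of $\Ad(g)$ on $k_v$, to recognize $I_v = I_v' \cdot I_v''$. The main obstacle is bookkeeping: the normalization constants $c', c''$, the Haar measures on $G_v$ and self-dual measures on $B_v$ (both relative to the chosen $\psi$), the Tamagawa probability measure on $[\SL_2]$, and the various conventions for the Weil representation (choice of $\psi$, metaplectic cocycle, Weil index $\gamma$) must be reconciled so as to collapse to the clean coefficient $2/\zeta_k^{(S)}(2)$ on the right-hand side. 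The factor of $2$ ultimately reflects the order of the metaplectic cover $\Mp_2 \to \SL_2$ together with the center of $G$.
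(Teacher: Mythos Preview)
Your approach is essentially identical to the paper's: split the left-hand side into its two factors via $B = k \oplus B^0$, apply the Rallis inner product formula (citing \cite{MR2837015}) to the ternary piece and a direct inner-product computation to the unary piece, then recombine using $I_v = I_v' I_v''$. The paper additionally polarizes first to reduce to the diagonal case $\phi_1 = \phi_2$, $\Psi_1 = \Psi_2$, and attributes the factor $2$ entirely to the elementary theta side, namely $\int |\theta_{\phi'}|^2 = 2 \prod_{v \in S} I_v'$ (citing \cite[Thm~4]{nelson-theta-squared}), rather than to the metaplectic cover or the center of $G$ as you speculate.
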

\begin{proof}
  By polarization, it suffices to consider the notationally simpler case
  $\Psi_{1,v} = \Psi_{2,v} =: \Psi_v$,
  $\phi_{1,v}' = \phi_{2,v}' =: \phi_v'$,
  $\phi_{1,v}'' = \phi_{2,v}'' =: \phi_v''$,
  $\phi_{1,v} = \phi_{2,v} =: \phi_v$, and similarly
  without the subscripts ``$v$.''
  From the factorization of inner products
  \[\langle \Ad(g) \phi_{v}, \phi_{v} \rangle
  = \langle \phi_{v}', \phi_{v}' \rangle \langle \Ad(g)
  \phi_{v}'', \phi_{v}'' \rangle\]
  according to the orthogonal decomposition $B_v = k_v \oplus
  B^0_v$,
  we see that $I_v = I_v' I_v''$, where
  \begin{align*}
    I_v'
    &:=
      \langle \phi_{v}', \phi_{v}' \rangle_{L^2(k_v)},
    \\
    I_v'' &:=
            \int_{g \in G_v}
            \langle \pi_v(g) \Psi_{v}, \Psi_{v} \rangle_{\pi_v}
            \langle \Ad(g) \phi_{v}'', \phi_{v}''
            \rangle_{L^2(B_v^0)}.
  \end{align*}
  It thus suffices to show separately that
  \begin{align*}
    \int |\theta_{\phi'}|^2
    &=
      2
      \prod_{v \in S}
      I_v',
    \\
    \int |\theta_{\phi'', \Psi}|^2
    &=
      \frac{L^{(S)}(\pi,\tfrac{1}{2})}{\zeta_k^{(S)}(2)}
      \prod_{v \in S}
      I_v''.
  \end{align*}
  The first identity follows from \cite[Thm 4]{nelson-theta-squared},
  while the second identity follows from specializing the Rallis
  inner product
  given
  by \cite[Thm 6.6]{MR2837015};
  compare with \cite[Prop 2.8 (i)]{MR3291638} for a
  more explicit formulation
  in the representative case $B = M_2(k)$.
\end{proof}
\subsection{Some local integrals}
\label{sec-7-4}
\begin{lemma}[The unramified case]
  \label{lem:rallis-ip-local-unramified-integral}
  Let $k$ be a non-archimedean local field of characteristic
  zero with ring of integers $\mathfrak{o}$.
  Let $\psi : k \rightarrow \mathbb{C}^{(1)}$
  be an unramified nontrivial character.
  Equip $M_2(k)$ with the Haar measure attached to $\psi$ as in \S\ref{sec-7-1},
  so that $\vol(M_2(\mathfrak{o})) = 1$.
  Set $\phi := 1_{M_2(\mathfrak{o})} \in \mathcal{S}(M_2(k))$.
  Equip $\PGL_2(k)$ with any Haar measure.
  Let $\pi$ be an unramified generic irreducible unitary representation
  of $\PGL_2(k)$ with $L^2$-normalized $\PGL_2(\mathfrak{o})$-invariant vector $v$
  and normalized spherical function $\Phi : = \PGL_2(k)
  \rightarrow \mathbb{C}$
  given by $\Phi(g) := \langle \pi(g) v, v \rangle$, so that $\Phi(1) = 1$.
  Then
  \[
    \int_{g \in \PGL_2(k)}
    \langle \Ad(g) \phi, \phi  \rangle_{L^2(M_2(k))}
    \Phi(g)
    = \vol(\PGL_2(\mathfrak{o})) \frac{L(\pi,\tfrac{1}{2})}{\zeta_k(2)},
  \]
  and the LHS converges absolutely.
\end{lemma}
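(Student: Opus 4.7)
The plan is a direct computation via the Cartan decomposition $\PGL_2(k) = \bigsqcup_{n \geq 0} K a(\varpi^n) K$, where $\varpi$ is a uniformizer, $q := |\mathfrak{o}/\varpi \mathfrak{o}|$, and $a(y) := \diag(y,1)$. Both factors of the integrand are bi-$K$-invariant: the spherical function $\Phi$ by assumption, and $g \mapsto \langle \Ad(g) \phi, \phi \rangle_{L^2(M_2(k))}$ because $\phi = 1_{M_2(\mathfrak{o})}$ is itself $\Ad(K)$-invariant. The integral therefore collapses to the single sum
\[
I = \sum_{n \geq 0} \vol(K a(\varpi^n) K) \, \langle \Ad(a(\varpi^n)) \phi, \phi \rangle \, \Phi(a(\varpi^n)).
\]

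The first step I would carry out is the explicit evaluation of the matrix coefficient: since $\langle \Ad(a(\varpi^n)) \phi, \phi \rangle$ equals the volume of $M_2(\mathfrak{o}) \cap \Ad(a(\varpi^n)) M_2(\mathfrak{o})$, whose description is the order having entries in $\mathfrak{o}$ everywhere except in the upper-right position where one finds $\varpi^n \mathfrak{o}$, it evaluates to $q^{-n}$ (using that the self-dual Haar assigns unit volume to $M_2(\mathfrak{o})$). Combined with the standard formula $\vol(K a(\varpi^n) K) = (1 + q^{-1}) q^n \vol(K)$ for $n \geq 1$, this produces a crucial cancellation: the $q^n$ growth of the Cartan measure exactly offsets the $q^{-n}$ decay of the matrix coefficient, leaving
\[
I = \vol(K) \Bigl(1 + (1 + q^{-1}) \sum_{n \geq 1} \Phi(a(\varpi^n))\Bigr).
\]

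The remaining sum I would evaluate using Macdonald's formula. Writing the Satake parameters as $\alpha, \alpha^{-1}$ with $L(\pi, s) = (1 - \alpha q^{-s})^{-1}(1 - \alpha^{-1} q^{-s})^{-1}$, Macdonald expresses $\Phi(a(\varpi^n))$ as a symmetric $\mathbb{C}$-linear combination of $(q^{-1/2}\alpha)^n$ and $(q^{-1/2}\alpha^{-1})^n$, so $\sum_{n \geq 1} \Phi(a(\varpi^n))$ becomes an elementary geometric series in those two variables. Evaluating and simplifying the resulting rational-function identities in $\alpha$, one identifies the outcome with $(1 - q^{-2}) L(\pi, 1/2) = L(\pi, 1/2)/\zeta_k(2)$. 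This algebraic simplification is the only step requiring any care, and it is routine.

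Absolute convergence I would address by invoking the unitarity hypothesis: for unramified generic unitary $\pi$ the Satake parameter satisfies $\alpha = q^s$ with $s$ either purely imaginary (tempered) or real of absolute value $< 1/2$ (complementary series), so Macdonald's formula yields $\Phi(a(\varpi^n)) = O(q^{n(|\Re s| - 1/2)})$, and combined with the $q^{-n}$ factor from the matrix coefficient and the $q^n$ factor from the Cartan measure the terms decay geometrically. There is no substantive obstacle in the argument; its entire content is the exponent matching $\langle \Ad(a(\varpi^n)) \phi, \phi \rangle = q^{-n}$, which is precisely what converts the spherical sum into the desired $L$-value divided by $\zeta_k(2)$.
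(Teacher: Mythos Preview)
Your proposal is correct and is precisely the approach the paper indicates: the paper's own proof reads ``A direct proof follows from the Cartan decomposition as in \S\ref{sec:classical-correlations} and the Macdonald formula \dots; we omit the routine details,'' and you have supplied exactly those details, including the key computation $\langle \Ad(a(\varpi^n)) \phi, \phi \rangle = q^{-n}$ (which the paper records as $2^{-n}$ in the $p=2$ case in the proof of Proposition~\ref{prop:classical-correlations}).
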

\begin{proof}
  This is implicit in Theorem \ref{thm:quaternary-rallis-adelic}.
  A direct proof follows from the Cartan
  decomposition as in \S\ref{sec:classical-correlations}
  and the Macdonald formula as in \cite[Thm 4.6.6]{MR1431508}; we omit the routine details.
\end{proof}

\begin{lemma}[The non-split case for the trivial representation]\label{lem:local-rallis-integral-real-case}
  Let $k$ be a local field $\neq \mathbb{C}$ of characteristic
  zero,
  $\psi : k \rightarrow \mathbb{C}^{(1)}$ a nontrivial
  character,
  and $B$ the non-split quaternion algebra over $k$.
  Set $H := P B ^\times$; it is compact.
  Equip $B$ with the Haar measure attached to $\psi$ as in
  \S\ref{sec-7-1}, and equip $H$ with any Haar measure.
  If $k = \mathbb{R}$,
  set
  $\phi(x) = e^{- 2 \pi \nr(x)}$ and $q := 1$;
  if $k$ is non-archimedean, set $\phi := 1_{R}$ and $q := \# \mathfrak{o}/\mathfrak{p}$,
  where $R \subset B$ and $\mathfrak{o} \subset k$ are maximal
  orders and $\mathfrak{p} \subset \mathfrak{o}$ the maximal ideal.
  Let $\Phi$ be the constant function $\Phi(g) := 1$.
  Let $c \in k^\times$ be such that $\psi(x) = \psi_1(c x)$, 
  where $\psi_1(x) := e^{4 \pi i x}$ if $k = \mathbb{R}$
  and $\psi_1$ is unramified if $k$ is non-archimedean.
  Then
  \[
  \int_{g \in H}
  \langle \Ad(g) \phi, \phi  \rangle_{L^2(B)}
  \Phi(g)
  =
  \vol(H) q^{-1} |c|^2.
  \]
\end{lemma}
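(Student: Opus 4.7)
The approach is to observe that the integrand $\langle \Ad(g)\phi, \phi\rangle_{L^2(B)}$ is in fact \emph{constant} in $g \in H$, reducing the integral to $\vol(H)\,\|\phi\|_{L^2(B)}^2$, and then to compute this $L^2$-norm by carefully tracking the normalization of the $\psi$-self-dual Haar measure on $B$.

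The reduction rests on the fact that $\phi$ is $\Ad(H)$-invariant in both cases.  For $k = \mathbb{R}$, $\phi(x) = e^{-2\pi\nr(x)}$, and the reduced norm is conjugation-invariant ($\nr(gxg^{-1}) = \nr(g)\nr(x)\nr(g)^{-1} = \nr(x)$).  For non-archimedean $k$, the hypothesis that $B$ is the non-split quaternion algebra ensures that $R$ is the \emph{unique} maximal order, hence $gRg^{-1} = R$ for all $g \in B^\times$, so $\phi = 1_R$ is $\Ad(H)$-invariant.  In either case, $\langle \Ad(g)\phi, \phi\rangle_{L^2(B)} = \|\phi\|_{L^2(B)}^2$ for every $g \in H$, and since $\Phi \equiv 1$ the integral equals $\vol(H)\,\|\phi\|^2$.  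It then suffices to show $\|\phi\|^2 = q^{-1}|c|^2$.

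To evaluate $\|\phi\|^2$, I first invoke the standard Fourier-inversion observation that if $d\mu_\psi$ is the $\psi$-self-dual measure on $B$ and $\psi = \psi_1(c\,\cdot)$, then $d\mu_\psi = |c|^{n/2}\,d\mu_{\psi_1} = |c|^2\,d\mu_{\psi_1}$ with $n = \dim_k B = 4$; it therefore remains to show $\|\phi\|_{L^2(B, d\mu_{\psi_1})}^2 = q^{-1}$.  In the archimedean case, identifying $B$ with $\mathbb{R}^4$ in the orthonormal basis $1,i,j,k$, the bilinear form is $\langle x,y\rangle = 2\,x\cdot y$, which combined with $\psi_1(t) = e^{4\pi it}$ gives Fourier kernel $e^{8\pi i x\cdot y}$ and hence $d\mu_{\psi_1} = 16\,dL$ for $dL$ Lebesgue; the Gaussian computation $\int_{\mathbb{R}^4} e^{-4\pi\|x\|^2}\,dL = 2^{-4}$ then yields $\|\phi\|^2 = 16 \cdot 2^{-4} = 1 = q^{-1}$.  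In the non-archimedean case, with $\psi_1$ unramified, the $\psi_1$-dual of $R$ with respect to $\langle\cdot,\cdot\rangle$ is $R^* = \{x \in B : \tr(xR^\iota) \subseteq \mathfrak{o}_k\}$; using the standard presentation $R = \mathfrak{o}_L \oplus \mathfrak{o}_L\pi_B$ with $L/k$ unramified quadratic and $\pi_B^2 \in \pi_k \mathfrak{o}_k^\times$, an explicit calculation of the trace pairing gives $R^* = \mathfrak{o}_L \oplus \mathfrak{o}_L\pi_B^{-1}$, so $[R^* : R] = [\mathfrak{o}_L : \pi_k \mathfrak{o}_L] = q^2$.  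Since the self-dual measure satisfies $\vol(R)\vol(R^*) = 1$, this forces $\vol(R)^2 \cdot q^2 = 1$, whence $\|\phi\|^2 = \vol(R) = q^{-1}$.

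The only real work in the argument is the bookkeeping of two normalization constants: the power $|c|^{n/2}$ arising from the rescaling of the additive character, and the factor $q^{-1}$ for the self-dual volume of the maximal order, which reflects that the reduced discriminant of the non-split quaternion algebra is $\pi_k$.  Neither presents any serious obstacle; this lemma amounts to a careful normalization check complementing the unramified Lemma~\ref{lem:rallis-ip-local-unramified-integral}.
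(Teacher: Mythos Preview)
Your proof is correct and follows essentially the same approach as the paper: both reduce to $\|\phi\|^2 = q^{-1}|c|^2$ via $\Ad(H)$-invariance of $\phi$, then compute this norm by identifying the dual lattice (non-archimedean) or evaluating the Gaussian (archimedean) with respect to the self-dual measure. The only cosmetic difference is that you factor out the scaling $d\mu_\psi = |c|^2\,d\mu_{\psi_1}$ at the outset and then work with the standard character $\psi_1$, whereas the paper carries $c$ through the Fourier-inversion computation directly; the underlying arguments are the same.
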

\begin{proof}
  We have $\Ad(g) \phi = \phi$,
  so the integrand is constant, and the formula to be proved is simply that
  $\|\phi\|^2 = q^{-1} |c|^2$.
  Consider first the archimedean case $k = \mathbb{R}$.
  We have $\|\phi\|^2 =
  \int_{x \in B}
  e^{- 4 \pi \nr(x)}
  =
  (|c|^{1/2})^{4}
  \int_{x \in B}
  e^{- 4 \pi |c| \nr(x)}
  = |c|^2 q^{-1} \mathcal{F} \phi_c(0)$,
  where $\phi_c(x) := \psi(\pm i \nr(x))$ with $\pm 1 := \sgn(c)$;
  it remains only to verify that $\mathcal{F} \phi_c (0) = 1$.
  For that, we expand out in terms of Gaussians
  using coordinates $x = (x_0,x_1,x_2,x_3)$ on $B$
  for which $\nr(x) = \sum x_j^2$:
  \[
  \phi_c(x)
  =
  \prod e^{- 4 \pi |c| x_j^2},
  \quad
  \psi(\langle x, y \rangle)
  =
  \prod e^{8 \pi i |c| x_j y_j}.
  \]
  From
  this
  and the known Fourier transform
  of the Gaussian,\footnote{
    For $y \in \mathbb{R}$,
    one has
    $\int_{x \in \mathbb{R}} e^{- 4 \pi |c| x^2 + 8 \pi i |c| x
      y}\, d x
    = (2 |c|^{1/2})^{-1} \int_{x \in \mathbb{R}} e^{-  \pi x^2 + 2 \pi i  x (2 |c|^{1/2} y)}\, d x
    =
    (2 |c|^{1/2})^{-1}
    e^{-4 \pi |c| y^2}$.
  }
  we see that $\mathcal{F} \phi_c$ is some multiple
  of $\phi_c$.
  By the measure normalization,
  it follows that $\mathcal{F} \phi_c = \phi_c$
  and hence $\mathcal{F} \phi_c(0) = 1$, as required.
  We turn now to the non-archimedean case.
  Let $j$ be a uniformizer for $R$.
  The lattice dual of $R$ with respect to $\psi$
  is\footnote{
    Write
    $B = E + E j$, where $E/k$ is the unramified quadratic field
    extension, $j \alpha j^{-1} = \overline{\alpha}$ for $\alpha
    \in E$,
    and $\varpi := j^2$ is a uniformizer of $k$.
    Then $R = \mathfrak{o}_E + \mathfrak{o}_E j$ with
    $\mathfrak{o}_E \subset E$
    the maximal order.
    For $x = x_0 + x_1 j, y = y_0 + y_j \in B$,
    we have
    $\langle x, y \rangle = \tr(x_0 \overline{y_0}) - \tr(x_1
    \overline{y_1}) \varpi$.
    Thus $x \in R$ satisfies
    $\psi (\langle x, R \rangle) = \{1\}$
    iff
    $\psi_1 (c \langle x, R \rangle) = \{1\}$
    iff
    $\tr(x_0 \mathfrak{o}_E) \subseteq c^{-1} \mathfrak{o}$
    and
    $\tr(x_1 \mathfrak{o}_E) \subseteq c^{-1}  \varpi^{-1} \mathfrak{o}$
    iff $x \in c^{-1} j^{-1} R$.
  }
  $\{ x \in R : \psi(\langle x, R \rangle) = \{ 1 \} \} = c^{-1} j^{-1} R$, so $\mathcal{F} 1_R = \gamma 1_{c^{-1} j^{-1} R}$ for some $\gamma > 0$.
  The volume of $c^{-1} j^{-1} R$ is that of $R$ multiplied by
  $|\nr(c^{-1} j^{-1})|^2 = |c|^{-4} q^2$,
  hence
  $1_R = \mathcal{F} \mathcal{F} 1_R = \gamma^2 |c|^{-4} q^2 1_R$,
  giving $\gamma = q^{-1} |c|^2$.
  Therefore $\|1_R\|^2 = \mathcal{F} 1_R (0) = q^{-1} |c|^2$, as required.
\end{proof}
  \subsection{Specialization\label{sec:rallis-ipf-specialization}}
  \label{sec-7-5}
  Suppose now $k := \mathbb{Q}$,
  that $\psi$ is the standard additive character
  (thus $\psi = \prod \psi_v$
  with $\psi_\infty(x) := e^{2 \pi i x}$),
  that $B$ is a definite quaternion algebra over $\mathbb{Q}$.
  Denote by $\ram(B) \ni \infty$ the set of places at which it ramifies.
  Fix a maximal order $R \subset B$.
  Let $K := \prod K_v = G_\infty \times K_{\fin}, K_{\fin} =
  \prod_p K_p$
  where $K_p = R_p^\times / \mathbb{Z}_p^\times$.
  Assume that the chosen Haar measure on $G_v$
  assigns volume one to each $K_v$.
  Then
  \begin{equation}\label{eq:non-split-volume-total}
    \vol(G_p) = 2 \text{ for all } p \in \ram(B).
  \end{equation}
  Let $S$ be a finite set of finite primes
  containing all $p \in \ram(B)$.
  Let
  $\phi' = \otimes \phi '_v \in \mathcal{S}(\mathbb{A})^{(+)}$,
  $\phi'' = \otimes \phi ''_v \in \mathcal{S}(B^0(\mathbb{A}))$
  satisfy:
  for $p \in \ram(B)$ or $p \notin S$,
  \begin{equation}\label{eq:unram-local-data-assumptions}
    \phi_\infty'(x) = e^{- 2 \pi x^2},
    \,
    \phi_{\infty}''(x) = e^{- 2 \pi \nr(x)},
    \,
    \phi_p' = 1_{\mathbb{Z}_p},
    \,
    \phi_p'' = 1_{R_p^0}.
  \end{equation}
  Thus $\phi_v', \phi_v''$ is determined except when
  $v  \in S - \ram(B)$.
  Let
  $\Psi = \otimes \Psi_{v} \in \pi = \otimes \pi_v \subseteq L^2([G])$
  be a factorized cusp form
  that is \emph{trivial} at all $v \in \ram(B)$.
  Assume that the factorization
  of unitary structures on $\pi$
  is normalized
  so that $\|\Psi_{v}\|_{\pi_{v}} = 1$
  for $v \in \ram(B)$ or  $v \notin S$.
  Then
  $\|\Psi\|_{L^2([G])}
    =
    \prod_{p \in S - \ram(B)}
    \|\Psi_p\|_{\pi_p}$.
  Set
  $\tilde{H}_0
  :=
  \theta_{\phi '}$,
  $\tilde{H}
  := \theta_{\phi '', \Psi}$.
  Theorem \ref{thm:quaternary-rallis-adelic}
  and the results of \S\ref{sec-7-4} give
  \begin{equation}\label{eq:specialized-integral-of-stuff}
    \int |\tilde{H}_0|^2
  \int |\tilde{H}|^2
  =
  2 \frac{L^{(S)}(\pi,\tfrac{1}{2})}{\zeta^{(S)}(2)}
  I_\infty 
  \prod_{p \in S} I_p
  \end{equation}
  with $I_\infty = 1/4$ and $I_p = 2/p$ for $p \in \ram(B)$
  and $I_p$ as in Theorem \ref{thm:quaternary-rallis-adelic} for
  $p \in S - \ram(B)$.
\subsection{Unadelization}
\label{sec-7-6}
Let $\Phi : \mathbb{H} \rightarrow \mathbb{C}$ be an automorphic
form of weight $k/2$ ($k \in \mathbb{Z}_{\geq 1}$) on
$\Gamma(N)$ for some $N \equiv 0 \pmod{4}$.  It can be lifted
uniquely to an adelic automorphic form $\tilde{\Phi}$ on
$[\Mp_2]$ by requiring that
$\tilde{\Phi}(n(x) t(y^{1/2}) \eps(\zeta)) = \zeta^{k} \Phi(x +
i y)$
for $x,y,\zeta \in \mathbb{R}, \mathbb{R}_+^\times,\{ \pm 1\}$;
then $\tilde{\Phi}$ transforms under the $k$th standard character of
the standard maximal compact subgroup of $\Mp_2(\mathbb{R})$ and is invariant
by the standard splitting of the $N$th principal congruence
subgroup of $\SL_2(\mathbb{A}_f)$ (see e.g. \cite[\S4.5]{nelson-theta-squared}).
Any automorphic
form $\tilde{\Phi}$ on $[\Mp_2]$ with these properties arises in
this way from $\Phi(x + iy) := \tilde{\Phi}(n(x) t(y^{1/2})$,
and the bijection $\Phi \leftrightarrow \tilde{\Phi}$
intertwines the inner products defined in
\S\ref{sec:some-conventions} and \S\ref{sec-7-3}.
This discussion applies to $\tilde{H}_0, \tilde{H}$;
by inspection of the formulas defining the theta lift,
one finds that
the corresponding classical modular forms
$H_0, H$
have respective weights $1/2, 3/2, 3/2$
and are given with
$\phi_{\fin}' := \otimes \phi_p'$,
$\phi_{\fin}'' := \otimes \phi_p''$
by
\[
H_0(z)
= y^{1/4}
\sum_{m \in \mathbb{Q}}
\phi_{\fin}'(m) q^{m^2},
\,
H(z)
=
y^{3/4}
\int_{g \in [G]}
\Psi(g)
\sum_{\beta \in B^0}
\Ad(g) \phi_{\fin}''(\beta) q^{\nr(\beta)}.
\]
By \eqref{eq:unram-local-data-assumptions},
the summations may be restricted to $S$-integral elements.
\subsection{Completion of the calculation}
\label{sec-7-8}
Assume now that $B$ is as in \S\ref{sec:preliminaries}, that
\eqref{eq:unram-local-data-assumptions} is satisfied for
all $p \notin S := \{2\}$, and that $\phi_2', \phi_2''$
are as in Lemma \ref{lem:inner-product-involving-phi}.
These assumptions fully determine
$\phi', \phi''$.
They imply that
$H_0(z)$ is the Jacobi theta function
defined in \S\ref{sec:reduction-proof}
which we denote here by
$\theta_{\Jac}(z) := H_0(z)$.
Recall the set
$\mathbf{Y} = \Gamma \backslash
\GL_2(\mathbb{Q}_2)/\GL_2(\mathbb{Z}_2)$
defined in \S\ref{sec-1}.
By strong approximation, one has a
bijection
$\mathbf{Y} \cong [G]/K$
induced by
$\GL_2(\mathbb{Q}_2) \twoheadrightarrow \PGL_2(\mathbb{Q}_2)
\cong G(\mathbb{Q}_2) \hookrightarrow G(\mathbb{A})$
(see e.g. \cite[\S3]{MR894322}).
The measure on $\mathbf{Y}$ from \S\ref{sec:measures}
is compatible
with that induced on $[G]/K$ from \S\ref{sec:rallis-ipf-specialization}
because for $g \in \GL_2(\mathbb{Q}_2)$ with image $[g] \in
G(\mathbb{Q}_2)$,
one has
$(1/2) \# \Gamma \cap g \GL_2(\mathbb{Z}_2) g^{-1}
= \# G(\mathbb{Q}) \cap [g] K [g]^{-1}$.
Under this bijection, $\Psi_1, \Psi_2 : \mathbf{Y} \rightarrow \mathbb{R}$
identify with factorizable $K$-invariant vectors
(also denoted) $\Psi_k : [G] \rightarrow \mathbb{R}$
in automorphic representations $\pi_1, \pi_2
\subset L^2([G])$
for which
$L(\pi_k,s) = L(\Psi_k,s)$.
Let $k \in \{1,2\}$
and set $\Psi = \Psi_k$;
one finds using SAGE or otherwise\footnote{
e.g., using that the supersingular
polynomial $j(j+4)(j-1768)$
splits over
 $\mathbb{F}_{23}$
} that $\Psi$ is trivial at $23$
and using \S\ref{sec:defn-ternary-thetas}
that
$H(z) = h_k(z)$.
We have
$\|\Psi_{k,2}\|_{\pi_2}^2
=
\|\Psi_k\| = 1$
and hence for $g \in \PGL_2(\mathbb{Q}_2) =
B_2^\times/\mathbb{Q}_2^\times$
that
$\langle \pi_{k,2}(g) \Psi_{k,2}, \Psi_{k,2} \rangle_{\pi_{k,2}}
=
\Xi_k(g)$
with $\Xi_k$ as in \S\ref{sec:classical-correlations}.
By \eqref{eq:specialized-integral-of-stuff}, it follows that
\[
\|\theta_{\Jac}\|^2 \|h_k\|^2
=
\int |\tilde{H}_0|^2
\int |\tilde{H}|^2
= 2 \frac{L^{(S)}(\Psi_k,\tfrac{1}{2})}{\zeta^{(S)}(2)}
I_\infty I_{23}
I_2
\]
with
$I_2 =
\int_{g \in \PGL_2(\mathbb{Q}_2)}
\Xi_k(g)
\langle \Ad(g) \phi_2, \phi_2 \rangle_{L^2(B_2)}$
 and  $I_\infty = 1/4, , I_{23} = 2/23$.
The measure defining
$L^2(B_2)$ is compatible with
that in \S\ref{sec:classical-correlations},
so by Proposition \ref{prop:classical-correlations} and the
definition
(\S\ref{sec-1-3-7}) of $\kappa_0$,
\[
I_2
= 
2^{-4}
\kappa_0^2 \left( 2 + \frac{L_2(\Psi_k,\tfrac{1}{2})}{\zeta_2(2)} \right)
= \kappa_1^{-1} 2^{-6}
\zeta_2(1) \zeta_2(2)
\left(2 + \frac{L_2(\Psi_k,\tfrac{1}{2})}{ \zeta_2(2)}\right).
\]
Recalling the definition
(\S\ref{sec-1-3-7}) of $\kappa_1$, we obtain
\[
\|\theta_{\Jac}\|^2 \|h_k\|^2
= 
2 L^{(S)}(\Psi_k,\tfrac{1}{2})
\cdot \frac{(4 \pi)^2}{4} 
\cdot \frac{2     \zeta_{23}(1) }{23}
\cdot
2^{-6}
\zeta_2(1)^2
\zeta_2(2)
\left(2 + \frac{L_2(\Psi_k,\tfrac{1}{2})}{ \zeta_2(2)}\right).
\]
We  ``simplify'' this a bit using that $\zeta_{23}(1) =
L_{73}(\Psi_k,\tfrac{1}{2})$
and factoring $L_2(\Psi_k,\tfrac{1}{2})$ out of the rightmost
expression.
Either by inspection
or by the SAGE output pasted below,
we evaluate the RHS to
$L(\Psi_k,\tfrac{1}{2}) P(\lambda_{\Psi_k}(2))$,
as required.
\begin{verbatim}
lv = 1/(1 - x/sqrt(2) + 1/2); C0 = 2 * (4*pi)^2/4
C2 = 2^(-6)/((1 - 1/2)^2*(1-1/2^2)) * (2/lv + (1-1/2^2))
C23= 2/23; C = C0 * C23 * C2
C
       -1/69*pi^2*(4*sqrt(2)*x - 15)
\end{verbatim}

\bibliography{refs}{}

\def\cprime{$'$} \def\cprime{$'$} \def\cprime{$'$} \def\cprime{$'$}
\begin{thebibliography}{10}

\bibitem{MR0268123}
A.~O.~L. Atkin and J.~Lehner.
\newblock Hecke operators on {$\Gamma _{0}(m)$}.
\newblock {\em Math. Ann.}, 185:134--160, 1970.

\bibitem{MR1671637}
A.~Bir{\'o}.
\newblock On a generalization of the {S}elberg trace formula.
\newblock {\em Acta Arith.}, 87(4):319--338, 1999.

\bibitem{MR2783966}
Andr{\'a}s Bir{\'o}.
\newblock A relation between triple products of weight 0 and weight {$\frac12$}
  cusp forms.
\newblock {\em Israel J. Math.}, 182:61--101, 2011.

\bibitem{MR1292728}
S.~B{\"o}cherer and R.~Schulze-Pillot.
\newblock Vector valued theta series and {W}aldspurger's theorem.
\newblock {\em Abh. Math. Sem. Univ. Hamburg}, 64:211--233, 1994.

\bibitem{BoSP96}
S.~B{\"o}cherer and R.~Schulze-Pillot.
\newblock On the central critical value of the triple product {$L$}-function.
\newblock In {\em Number Theory ({P}aris, 1993--1994)}, volume 235 of {\em
  London Math. Soc. Lecture Note Ser.}, pages 1--46. Cambridge University
  Press, Cambridge, 1996.

\bibitem{MR1628792}
S.~B{\"o}cherer and R.~Schulze-Pillot.
\newblock On the central critical value of the triple product {$L$}-function.
\newblock In {\em Number theory ({P}aris, 1993--1994)}, volume 235 of {\em
  London Math. Soc. Lecture Note Ser.}, pages 1--46. Cambridge Univ. Press,
  Cambridge, 1996.

\bibitem{MR2018269}
Siegfried B{\"o}cherer, Peter Sarnak, and Rainer Schulze-Pillot.
\newblock Arithmetic and equidistribution of measures on the sphere.
\newblock {\em Comm. Math. Phys.}, 242(1-2):67--80, 2003.

\bibitem{MR1096467}
Siegfried B{\"o}cherer and Rainer Schulze-Pillot.
\newblock Siegel modular forms and theta series attached to quaternion
  algebras.
\newblock {\em Nagoya Math. J.}, 121:35--96, 1991.

\bibitem{MR1147818}
Siegfried B{\"o}cherer and Rainer Schulze-Pillot.
\newblock The {D}irichlet series of {K}oecher and {M}aass and modular forms of
  weight {$\frac32$}.
\newblock {\em Math. Z.}, 209(2):273--287, 1992.

\bibitem{MR1292796}
Siegfried B{\"o}cherer and Rainer Schulze-Pillot.
\newblock Mellin transforms of vector valued theta series attached to
  quaternion algebras.
\newblock {\em Math. Nachr.}, 169:31--57, 1994.

\bibitem{MR1431508}
Daniel Bump.
\newblock {\em Automorphic Forms and Representations}, volume~55 of {\em
  Cambridge Studies in Advanced Mathematics}.
\newblock Cambridge University Press, Cambridge, 1997.

\bibitem{MR946351}
M.~Cowling, U.~Haagerup, and R.~Howe.
\newblock Almost {$L^2$} matrix coefficients.
\newblock {\em J. Reine Angew. Math.}, 387:97--110, 1988.

\bibitem{MR2729264}
Mladen Dimitrov and Louise Nyssen.
\newblock Test vectors for trilinear forms when at least one representation is
  not supercuspidal.
\newblock {\em Manuscripta Math.}, 133(3-4):479--504, 2010.

\bibitem{MR0485698}
M.~Eichler.
\newblock The basis problem for modular forms and the traces of the {H}ecke
  operators.
\newblock In {\em Modular functions of one variable, {I} ({P}roc. {I}nternat.
  {S}ummer {S}chool, {U}niv. {A}ntwerp, {A}ntwerp, 1972)}, pages 75--151.
  Lecture Notes in Math., Vol. 320. Springer, Berlin, 1973.

\bibitem{MR0080768}
Martin Eichler.
\newblock \"{U}ber die {D}arstellbarkeit von {M}odulformen durch {T}hetareihen.
\newblock {\em J. Reine Angew. Math.}, 195:156--171 (1956), 1955.

\bibitem{MR1896237}
Matthew Emerton.
\newblock Supersingular elliptic curves, theta series and weight two modular
  forms.
\newblock {\em J. Amer. Math. Soc.}, 15(3):671--714 (electronic), 2002.

\bibitem{MR2740724}
Brooke Feigon and David Whitehouse.
\newblock Exact averages of central values of triple product {$L$}-functions.
\newblock {\em Int. J. Number Theory}, 6(7):1609--1624, 2010.

\bibitem{MR848319}
Mario Feingold and Asher Peres.
\newblock Distribution of matrix elements of chaotic systems.
\newblock {\em Phys. Rev. A (3)}, 34(1):591--595, 1986.

\bibitem{2012arXiv1207.4709T}
Wee~Teck Gan, Yannan Qiu, and Shuichiro Takeda.
\newblock The regularized {S}iegel-{W}eil formula (the second term identity)
  and the {R}allis inner product formula.
\newblock {\em Invent. Math.}, 198(3):739--831, 2014.

\bibitem{MR2837015}
Wee~Teck Gan and Shuichiro Takeda.
\newblock On the regularized {S}iegel-{W}eil formula (the second term identity)
  and non-vanishing of theta lifts from orthogonal groups.
\newblock {\em J. Reine Angew. Math.}, 659:175--244, 2011.

\bibitem{MR546600}
Stephen Gelbart and Herv{\'e} Jacquet.
\newblock Forms of {${\rm GL}(2)$} from the analytic point of view.
\newblock In {\em Automorphic Forms, Representations and {$L$}-functions
  ({P}roc. {S}ympos. {P}ure {M}ath., {O}regon {S}tate {U}niv., {C}orvallis,
  {O}re., 1977), {P}art 1}, Proc. Sympos. Pure Math., XXXIII, pages 213--251.
  Amer. Math. Soc., Providence, R.I., 1979.

\bibitem{MR0424695}
Stephen~S. Gelbart.
\newblock {\em Weil's representation and the spectrum of the metaplectic
  group}.
\newblock Lecture Notes in Mathematics, Vol. 530. Springer-Verlag, Berlin-New
  York, 1976.

\bibitem{MR894322}
Benedict~H. Gross.
\newblock Heights and the special values of {$L$}-series.
\newblock In {\em Number theory ({M}ontreal, {Q}ue., 1985)}, volume~7 of {\em
  CMS Conf. Proc.}, pages 115--187. Amer. Math. Soc., Providence, RI, 1987.

\bibitem{MR1145805}
Benedict~H. Gross and Stephen~S. Kudla.
\newblock Heights and the central critical values of triple product
  {$L$}-functions.
\newblock {\em Compositio Math.}, 81(2):143--209, 1992.

\bibitem{harris-kudla-1991}
Michael Harris and Stephen~S. Kudla.
\newblock The central critical value of a triple product {$L$}-function.
\newblock {\em Ann. of Math. (2)}, 133(3):605--672, 1991.

\bibitem{HL94}
Jeffrey Hoffstein and Paul Lockhart.
\newblock Coefficients of {M}aass forms and the {S}iegel zero.
\newblock {\em Ann. of Math. (2)}, 140(1):161--181, 1994.
\newblock With an appendix by Dorian Goldfeld, Hoffstein and Daniel Lieman.

\bibitem{2014arXiv1409.8173H}
Y.~{Hu}.
\newblock {Triple product formula and mass equidistribution on modular curves
  of level N}.
\newblock {\em ArXiv e-prints}, September 2014.

\bibitem{MR2198222}
Atsushi Ichino.
\newblock Pullbacks of {S}aito-{K}urokawa lifts.
\newblock {\em Invent. Math.}, 162(3):551--647, 2005.

\bibitem{MR2449948}
Atsushi Ichino.
\newblock Trilinear forms and the central values of triple product
  {$L$}-functions.
\newblock {\em Duke Math. J.}, 145(2):281--307, 2008.

\bibitem{MR2061214}
Henryk Iwaniec and Emmanuel Kowalski.
\newblock {\em Analytic number theory}, volume~53 of {\em American Mathematical
  Society Colloquium Publications}.
\newblock American Mathematical Society, Providence, RI, 2004.

\bibitem{MR871663}
H.~Jacquet and D.~Zagier.
\newblock Eisenstein series and the {S}elberg trace formula. {II}.
\newblock {\em Trans. Amer. Math. Soc.}, 300(1):1--48, 1987.

\bibitem{MR1465794}
Dmitri Jakobson.
\newblock Equidistribution of cusp forms on {${\rm PSL}_2(\bold Z)\backslash
  {\rm PSL}_2(\bold R)$}.
\newblock {\em Ann. Inst. Fourier (Grenoble)}, 47(3):967--984, 1997.

\bibitem{MR1244668}
Svetlana Katok and Peter Sarnak.
\newblock Heegner points, cycles and {M}aass forms.
\newblock {\em Israel J. Math.}, 84(1-2):193--227, 1993.

\bibitem{MR629468}
W.~Kohnen and D.~Zagier.
\newblock Values of {$L$}-series of modular forms at the center of the critical
  strip.
\newblock {\em Invent. Math.}, 64(2):175--198, 1981.

\bibitem{MR2356110}
P{\"a}r Kurlberg, Lior Rosenzweig, and Ze{\'e}v Rudnick.
\newblock Matrix elements for the quantum cat map: fluctuations in short
  windows.
\newblock {\em Nonlinearity}, 20(10):2289--2304, 2007.

\bibitem{MR2150390}
P{\"a}r Kurlberg and Ze{\'e}v Rudnick.
\newblock On the distribution of matrix elements for the quantum cat map.
\newblock {\em Ann. of Math. (2)}, 161(1):489--507, 2005.

\bibitem{MR2890490}
Sheng-Chi Liu.
\newblock A note on the variance of the distribution of {H}eegner points.
\newblock {\em Int. J. Number Theory}, 8(2):525--529, 2012.

\bibitem{lmfdb}
The {LMFDB Collaboration}.
\newblock The l-functions and modular forms database.
\newblock \url{http://www.lmfdb.org}, 2015.
\newblock [Online; accessed 14 December 2015].

\bibitem{MR2776068}
Wenzhi Luo.
\newblock A note on the distribution of integer points on spheres.
\newblock {\em Math. Z.}, 267(3-4):965--970, 2011.

\bibitem{MR2504745}
Wenzhi Luo, Ze{\'e}v Rudnick, and Peter Sarnak.
\newblock The variance of arithmetic measures associated to closed geodesics on
  the modular surface.
\newblock {\em J. Mod. Dyn.}, 3(2):271--309, 2009.

\bibitem{MR1361757}
Wenzhi Luo and Peter Sarnak.
\newblock Quantum ergodicity of eigenfunctions on {${\rm PSL}_2(\bold
  Z)\backslash \bold H^2$}.
\newblock {\em Inst. Hautes \'Etudes Sci. Publ. Math.}, (81):207--237, 1995.

\bibitem{luo-sarnak-mass}
Wenzhi Luo and Peter Sarnak.
\newblock Mass equidistribution for {H}ecke eigenforms.
\newblock {\em Comm. Pure Appl. Math.}, 56(7):874--891, 2003.
\newblock Dedicated to the memory of J{\"u}rgen K. Moser.

\bibitem{MR2103474}
Wenzhi Luo and Peter Sarnak.
\newblock Quantum variance for {H}ecke eigenforms.
\newblock {\em Ann. Sci. \'Ecole Norm. Sup. (4)}, 37(5):769--799, 2004.

\bibitem{MR2141534}
Greg Martin.
\newblock Dimensions of the spaces of cusp forms and newforms on
  {$\Gamma_0(N)$} and {$\Gamma_1(N)$}.
\newblock {\em J. Number Theory}, 112(2):298--331, 2005.

\bibitem{michel-2009}
Philippe Michel and Akshay Venkatesh.
\newblock The subconvexity problem for {${\rm GL}_2$}.
\newblock {\em Publ. Math. Inst. Hautes \'Etudes Sci.}, (111):171--271, 2010.

\bibitem{PDN-HQUE-LEVEL}
Paul~D. Nelson.
\newblock Equidistribution of cusp forms in the level aspect.
\newblock {\em Duke Math. J.}, 160(3):467--501, 2011.

\bibitem{MR3356036}
Paul~D. Nelson.
\newblock Evaluating modular forms on {S}himura curves.
\newblock {\em Math. Comp.}, 84(295):2471--2503, 2015.

\bibitem{nelson-newform-isolation}
Paul~{D}. Nelson.
\newblock Analytic isolation of newforms of given level.
\newblock preprint, 2016.

\bibitem{nelson-padic-que}
Paul~{D}. Nelson.
\newblock Microlocal lifts and quantum unique ergodicity on {${\rm GL}\sb
  2(\mathbb{Q}_p)$}.
\newblock preprint, 2016.

\bibitem{nelson-theta-squared}
Paul~{D}. Nelson.
\newblock The spectral decomposition of {$|\theta|^2$}.
\newblock preprint, 2016.

\bibitem{MR3110797}
Paul~D. Nelson, Ameya Pitale, and Abhishek Saha.
\newblock Bounds for {R}ankin--{S}elberg integrals and quantum unique
  ergodicity for powerful levels.
\newblock {\em J. Amer. Math. Soc.}, 27(1):147--191, 2014.

\bibitem{MR3204186}
St{\'e}phane Nonnenmacher.
\newblock Anatomy of quantum chaotic eigenstates.
\newblock In {\em Chaos}, volume~66 of {\em Prog. Math. Phys.}, pages 193--238.
  Birkh\"auser/Springer, Basel, 2013.

\bibitem{MR579066}
Arnold Pizer.
\newblock An algorithm for computing modular forms on {$\Gamma _{0}(N)$}.
\newblock {\em J. Algebra}, 64(2):340--390, 1980.

\bibitem{MR2249532}
Alexandru~A. Popa.
\newblock Central values of {R}ankin {$L$}-series over real quadratic fields.
\newblock {\em Compos. Math.}, 142(4):811--866, 2006.

\bibitem{MR2215136}
Kartik Prasanna.
\newblock Integrality of a ratio of {P}etersson norms and level-lowering
  congruences.
\newblock {\em Ann. of Math. (2)}, 163(3):901--967, 2006.

\bibitem{MR2501296}
Kartik Prasanna.
\newblock Arithmetic properties of the {S}himura-{S}hintani-{W}aldspurger
  correspondence.
\newblock {\em Invent. Math.}, 176(3):521--600, 2009.
\newblock With an appendix by Brian Conrad.

\bibitem{MR2604368}
Kartik Prasanna.
\newblock On the {F}ourier coefficients of modular forms of half-integral
  weight.
\newblock {\em Forum Math.}, 22(1):153--177, 2010.

\bibitem{MR3291638}
Yannan Qiu.
\newblock Periods of {S}aito-{K}urokawa representations.
\newblock {\em Int. Math. Res. Not. IMRN}, (24):6698--6755, 2014.

\bibitem{MR743016}
S.~Rallis.
\newblock On the {H}owe duality conjecture.
\newblock {\em Compositio Math.}, 51(3):333--399, 1984.

\bibitem{MR2373356}
Andre Reznikov.
\newblock Rankin-{S}elberg without unfolding and bounds for spherical {F}ourier
  coefficients of {M}aass forms.
\newblock {\em J. Amer. Math. Soc.}, 21(2):439--477, 2008.

\bibitem{2013arXiv1303.6972S}
P.~{Sarnak}, P.~{Zhao}, and A.~b.~M. {Woodbury}.
\newblock {The Quantum Variance of the Modular Surface}.
\newblock {\em ArXiv e-prints}, March 2013.

\bibitem{MR2267060}
Roman Schubert.
\newblock Upper bounds on the rate of quantum ergodicity.
\newblock {\em Ann. Henri Poincar\'e}, 7(6):1085--1098, 2006.

\bibitem{MR1954121}
Jean-Pierre Serre.
\newblock {\em Trees}.
\newblock Springer Monographs in Mathematics. Springer-Verlag, Berlin, 2003.
\newblock Translated from the French original by John Stillwell, Corrected 2nd
  printing of the 1980 English translation.

\bibitem{MR0333081}
Hideo Shimizu.
\newblock Theta series and automorphic forms on {${\rm GL}_{2}$}.
\newblock {\em J. Math. Soc. Japan}, 24:638--683, 1972.

\bibitem{sage2015}
W.\thinspace{}A. Stein et~al.
\newblock {\em {S}age {M}athematics {S}oftware ({V}ersion 6.7)}.
\newblock The Sage Development Team, 2015.
\newblock {\tt http://www.sagemath.org}.

\bibitem{MR580949}
Marie-France Vign{\'e}ras.
\newblock {\em Arithm\'etique des alg\`ebres de quaternions}, volume 800 of
  {\em Lecture Notes in Mathematics}.
\newblock Springer, Berlin, 1980.

\bibitem{watson-2008}
Thomas~C. Watson.
\newblock Rankin triple products and quantum chaos.
\newblock \url{arXiv.org:0810.0425}, 2008.

\bibitem{MR0165033}
Andr{\'e} Weil.
\newblock Sur certains groupes d'op\'erateurs unitaires.
\newblock {\em Acta Math.}, 111:143--211, 1964.

\bibitem{MR633667}
Don Zagier.
\newblock Eisenstein series and the {S}elberg trace formula. {I}.
\newblock In {\em Automorphic forms, representation theory and arithmetic
  ({B}ombay, 1979)}, volume~10 of {\em Tata Inst. Fund. Res. Studies in Math.},
  pages 303--355. Tata Inst. Fundamental Res., Bombay, 1981.

\bibitem{2009arXiv0911.4312Z}
S.~{Zelditch}.
\newblock {Recent developments in mathematical Quantum Chaos}.
\newblock {\em ArXiv e-prints}, November 2009.

\bibitem{MR916129}
Steven Zelditch.
\newblock Uniform distribution of eigenfunctions on compact hyperbolic
  surfaces.
\newblock {\em Duke Math. J.}, 55(4):919--941, 1987.

\bibitem{MR1262192}
Steven Zelditch.
\newblock On the rate of quantum ergodicity. {I}. {U}pper bounds.
\newblock {\em Comm. Math. Phys.}, 160(1):81--92, 1994.

\bibitem{MR2651907}
Peng Zhao.
\newblock Quantum variance of {M}aass-{H}ecke cusp forms.
\newblock {\em Comm. Math. Phys.}, 297(2):475--514, 2010.

\end{thebibliography}
\bibliographystyle{plain}
\end{document}